\documentclass{article}

\usepackage[utf8]{inputenc}
\usepackage[margin=1.2in]{geometry}
\usepackage{amsmath,amssymb,amsfonts,amsthm,amscd}
\usepackage{mathtools}
\mathtoolsset{showonlyrefs,showmanualtags}
\usepackage{bm}

\usepackage{booktabs}
\usepackage{array}
\usepackage{geometry}

\usepackage{booktabs}
\usepackage{float}
\usepackage{placeins}
\usepackage{xcolor}
\usepackage{graphicx}
\usepackage{wrapfig}
\usepackage{subcaption}

\usepackage[color=green!20]{todonotes}

\usepackage{hyperref}
\hypersetup{
  colorlinks=true,
  linkcolor=red,
  citecolor=blue,
  filecolor=magenta,
  urlcolor=cyan
}

\newtheorem{theorem}{Theorem}[section]

\newtheorem{corollary}{Corollary}[section]

\newtheorem{definition}{Definition}[section]
\newtheorem{example}{Example}[section]

\newtheorem{lemma}{Lemma}[section]

\newtheorem{proposition}{Proposition}[section]
\newtheorem{remark}{Remark}[section]

\newtheorem{assumption}{Assumption}[section]

\DeclareMathOperator{\grad}{grad}
\DeclareMathOperator{\curl}{curl}
\DeclareMathOperator{\divergence}{div}

\newcommand{\f}[1]{\mathfrak{#1}}

\newcommand{\e}[1]{\mathcal{#1}}

\newcommand{\aip}[2][]{\langle #2 \rangle_{#1}}

\newcommand{\keyterm}[2][]{\textbf{#2}}
\newcommand{\R} {\mathbb R}

\DeclareMathOperator{\range}{range}

\newcommand{\HH}{\mathfrak H}

\newcommand{\Piop}{\boldsymbol\Pi}
\newcommand{\ip}[2]{\langle #1,#2\rangle}
\newcommand{\norm}[1]{\lVert #1\rVert}

\title{Optimal Control in Hilbert Complex Spaces with Finite Element Exterior Calculus}
\author{Farid Bozorgnia, Michael Holst, Anshu Kumar, \\ Vyacheslav Kungurtsev, Martin Licht, Gantumur Tsogtgerel}

\begin{document}

\maketitle

\begin{abstract}
We develop a framework for PDE-constrained optimal control on Hilbert complexes and its structure-preserving discretization by finite element exterior calculus (FEEC), with emphasis on problems where nontrivial topology creates physically meaningful global modes. We separate the gauge-fixed local state from harmonic components representing global circulation or flux, and introduce finite-dimensional topological actuators whose reachable modes are determined jointly by domain topology and actuator rank. We establish well-posedness, weak optimality conditions, reachability results, and FEEC error estimates preserving the harmonic structure. Numerical experiments on contractible and multiply connected domains demonstrate circulation and cavity-flux control, rank-dependent reachability, mesh-independent optimization, and topology-induced singular convergence. We further extend the framework to time-dependent Maxwell control, where harmonic electric circulation and magnetic flux become dynamical states exhibiting explicit conservation and reachability properties.
\end{abstract}

\section{Introduction}

Finite Element Exterior Calculus (FEEC) is a general framework for the construction and error analysis of mixed finite element methods.  It formulates finite element methods in the language of differential forms and Hilbert complexes, thereby exposing the algebraic, metric, and topological structures shared by many classical vector-valued finite element methods.  This manuscript develops a corresponding framework for PDE-constrained optimal control, with particular emphasis on situations in which nontrivial cohomology creates physically meaningful global circulation or flux modes in addition to the local PDE response.

The central modeling point is that the gauge-fixed mixed Hodge--Laplace state and the physical field need not coincide on a multiply connected domain.  We write
\begin{equation}\label{eq:intro-split}
    y=u+h, \qquad u\perp \HH^k, \qquad h\in\HH^k,
\end{equation}
where $u$ is the local state selected by the mixed Hodge--Laplace problem and $h$ is the physical harmonic component.  A period map
\begin{equation}\label{eq:intro-period-map}
    \Piop:\HH^k\longrightarrow\R^{b_k}
\end{equation}
provides finite-dimensional coordinates for that global state: at degree $k=1$ its components are circulations around noncontractible loops, while at degree $k=2$ they are fluxes through nontrivial two-cycles.  We introduce a finite-dimensional topological actuator $a$ through
\begin{equation}\label{eq:intro-top-control}
    \Piop h=Ga+c_0.
\end{equation}
Thus the Betti number $b_k=\dim\HH^k$ fixes the dimension of the topological state, whereas $\operatorname{rank}(G)$ fixes the dimension of the reachable topological subspace.  The harmonic multiplier $p$ appearing in the standard mixed Hodge--Laplace formulation has a different role: it is the harmonic compatibility component of the total forcing and is not used as a surrogate physical state or tracking target.

Alongside this topology-aware formulation, we retain the standard distributed-control setting.  Suppose that $V\subset Z$ is a dense continuous embedding of Hilbert spaces and that $A:V\to V'$ is linear.  For a nonempty closed convex admissible set $Z_d\subset Z$, a prototypical stationary problem is
\begin{equation}
    \min_{y\in V,\,z\in Z_d}J(y,z),\qquad
    J(y,z)=\frac12\|Oy-y_d\|_Z^2+\frac\alpha2\|z\|_Z^2,
\end{equation}
subject to
\begin{equation}
    Ay=f+z,
\end{equation}
with $\alpha>0$.  Here $O$ is a bounded observation operator and $f$ is fixed forcing.  If the state equation is well posed with solution map $S$, the fixed forcing makes the state map affine, $y=S(f+z)$, and one obtains the usual reduced problem in $z$.  The topology-aware Hodge--Laplace problem developed below supplements this familiar local control by the finite-dimensional state and actuator in \eqref{eq:intro-split}--\eqref{eq:intro-top-control}.

For time-dependent systems the same distinction persists.  A generic evolution problem has the form
\begin{equation}
    \partial_t y+Ay=f+z,\qquad y(0)=y_0,
\end{equation}
with running and terminal tracking terms.  In Maxwell's equations, however, the harmonic coordinates are not merely static kernel components: they become dynamical global states.  Harmonic electric circulation satisfies a finite-dimensional balance law driven by the harmonic projection of current and by optional loop actuators, whereas harmonic magnetic cavity flux is conserved in the absence of a physically distinct global or boundary flux actuator.  The resulting control problem therefore consists of a local FEEC Maxwell evolution coupled to a low-dimensional topology-dependent evolution system.

A prototypical topologically trivial example is the Poisson problem over a domain $\Omega$, with $Z=L^2(\Omega)$, $V=H^1_0(\Omega)$, and $A=-\Delta$.  More generally, the de Rham Hilbert complex
\[
    H^1\xrightarrow{\grad}H(\curl)\xrightarrow{\curl}H(\divergence)\xrightarrow{\divergence}L^2
\]
contains the mixed operators underlying the curl--curl, div--div, and Maxwell examples studied here.  Hilbert-complex structure is particularly useful when standard coercive formulations become inconvenient or unstable on nonconvex domains or domains with nontrivial topology.  FEEC transfers this structure to compatible finite-dimensional subcomplexes, retaining stability, cohomology, and commuting-projection properties even in the presence of re-entrant corners and other geometric complications~\cite{arnold2010finite,schwedes2017mesh}.

The contributions of the paper are the following.  First, we formulate stationary optimal control on a closed Hilbert complex with the physical decomposition \eqref{eq:intro-split}, distributed control of the local mixed PDE, and finite-dimensional control of the period coordinates.  We prove existence and uniqueness for positive regularization, derive the optimality system entirely in weak form, and characterize topology-dependent reachability and the reduced Hessian.  Second, we show how continuous and discrete period functionals can be represented without requiring traces of arbitrary finite element fields on lower-dimensional cycles, and we formulate FEEC discretizations that preserve the harmonic dimension and the physical period coordinates.  Third, the stationary experiments separate geometry from topology: a contractible L-shaped domain provides a local convergence and mesh-independence benchmark; a solid torus gives one circulation mode; a two-hole domain gives a non-orthogonal two-dimensional harmonic space and makes actuator rank and alignment observable; and a spherical shell realizes the same construction at degree two through cavity flux.  The comparison between the L-shaped and two-hole domains further shows that a re-entrant corner need not activate its singular mode, whereas a nontrivial cohomology class can force the harmonic field into the singular regularity class and thereby determine the observed convergence rate.  Fourth, we extend the framework to time-dependent Maxwell control, where the same cohomological coordinates satisfy finite-dimensional evolution laws.  The Maxwell numerical program mirrors the stationary study with a re-entrant-corner baseline, one- and two-mode circulation-control experiments, and magnetic-flux conservation and actuation on a spherical shell.

Throughout we restrict attention to linear state equations and positive control regularization.  Semilinear problems, nonlinear constitutive laws, and application-specific derivations of the finite-dimensional actuator matrices are left to subsequent work.

\subsection{Related Work}
 To the best of our knowledge, this is the first systematic PDE-constrained optimal-control
formulation at the level of closed Hilbert complexes in which nontrivial cohomology is represented
as an explicit physical state in period coordinates, distinct from the harmonic compatibility
multiplier of the mixed Hodge–Laplace problem, and controlled through a finite-dimensional
topological actuator.
For a general overview of PDE-constrained optimization, including the formalism of optimal control in function spaces, solution properties, and algorithms, see~\cite{hinze2008optimization}.

Some recent contemporary works regarding the computational procedures associated with FEEC and their properties are in~\cite{li2020improved,licht2017complexes,gawlik2021local,licht2024symmetry}. 

Maxwell's equations for modeling electromagnetism can be studied from this perspective. Works that consider optimal control of an electromagnetic system of this form,
in a range of settings and applications, include~\cite{yousept2017optimal,yousept2012optimal,nicaise2017optimal,brizitskii2010inverse}. Recent research studying the well-posedness, regularity and structure preservation in the control of electromagnetic systems given by Maxwell's equations appears in~\cite{tran2022optimal,antil2026structure,arnold2010finite}.

The elasticity tensor presents additional structure amenable to analysis within a Hilbert complex.
Optimal control in this space tends to be highly specialized to particular scenarios, even in simpler models; see, e.g.,~\cite{schiela2020optimal} and the monograph~\cite{komkov2006optimal}.



\section{Background on the Hodge--Laplace Equation}

This section recalls the algebraic and analytic structures that underlie FEEC.  The purpose is not merely to collect notation, but to
explain why cohomology, harmonic forms, and the Hodge decomposition enter the
well-posedness and discretization of the Hodge--Laplace equation.  The
presentation follows the extensive review developed in
\cite{BrLe92,arnold2010finite} and used throughout the associated literature.

\subsection{Complexes, cycles, cocycles, and cohomology}

At the highest level of abstraction, the notions of complexes, chains and cochains provide the generic algebraic structure that defines the rest of the manuscript. We begin by presenting the structure in its most general form to elucidate its potential. 

A \keyterm{cochain complex} is a
sequence of vector spaces $\{X^k\}$ and linear maps $\{d^k\}$ with the structure
\[	 X^0
	\xrightarrow{d^0}
	X^1
	\xrightarrow{d^1}
	\cdots
	\xrightarrow{d^{n-1}}
	X^n
	\xrightarrow{d^{n}}
	 0
\]
such that
\[
d^{k+1} d^k = 0
	\qquad \text{for all } k.
\]
The independence of this property from the level $k$ allows one to abbreviate it as $d^2=0$. 

Here, and in the settings considered in this manuscript, the chain is finite, i.e. $n<\infty$, however this does not have to be the case.

 The word ``cochain'' indicates
that the maps increase degree.  By contrast, a \keyterm{chain complex} has maps
that decrease degree,
\[
	0
	\xleftarrow{\partial_0}
 C_0
	\xleftarrow{\partial_1}
	C_1
	\xleftarrow{\partial_2}
	\cdots
	\xleftarrow{\partial_n}
	C_n
\]
with $\partial_{k-1}\partial_k=0$.  In a chain complex one speaks of cycles and
boundaries.  In a cochain complex one speaks of cocycles and coboundaries.  The
distinction is mostly one of direction: chains are tested by cochains, and
differential forms naturally form a cochain complex.

For a cochain complex, the $k$-cocycles and
$k$-coboundaries are
\[	\f Z^k
	:=
	\ker d^k,
	\qquad
	\f B^k
	:=
	\operatorname{range} d^{k-1}.
\]
The identity $d^2=0$ implies
\[
\f B^k \subset \f Z^k.
\]
The quotient
\[	\f H^k
	:=
	\f Z^k/\f B^k
\]
is the $k$th cohomology group of the complex.  Thus cohomology measures the
failure of closed objects to be exact.  An element of $\f Z^k$ is closed; it belongs
to $\f B^k$ precisely when it is the differential of an object of one lower degree.

\subsection{Hilbert complexes}

The preceding discussion is algebraic and topological.  Partial differential
equations require spaces with norms, inner products, adjoints, and completeness. Hilbert complexes combine this chain structure together with function spaces that are particularly appropriate for analyzing partial differential equations. 

\begin{definition}[Hilbert complex]\label{def:hilbert-complex-expanded}
	A Hilbert complex $(W,d)$ consists of Hilbert spaces $W^k$ and densely
	defined closed linear operators
	\[
    	d^k : V^k \subset W^k \longrightarrow W^{k+1},
	\]
    with domains $V^k=\operatorname{dom}(d^k)$, such that $d^{k+1}d^k=0$.
\end{definition}

The domain $V^k$ is itself a Hilbert space when equipped with the graph inner
product
\[
\langle u,v\rangle_{V^k}
	:=
	\langle u,v\rangle_{W^k}
	+
	\langle d^k u,d^k v\rangle_{W^{k+1}}.
\]
The resulting complex $(V,d)$ is called the domain complex.  


A Hilbert complex is called \keyterm{closed} when every $\f B^k$ is closed in
$W^k$.  This condition is essential for the Poincare inequality, the Hodge
decomposition, and the well-posedness of the Hodge--Laplace problem.


\paragraph{Adjoints and the dual complex}

The Hilbert space structure allows one to introduce adjoints.  For each
$d^{k-1}:V^{k-1}\subset W^{k-1}\to W^k$, the adjoint is the operator
\[
	d_k^* : V_k^* \subset W^k \longrightarrow W^{k-1}
\]
defined by the relation
\[
	\langle d_k^* u,v\rangle_{W^{k-1}}
	=
	\langle u,d^{k-1}v\rangle_{W^k}
	\qquad
	\text{for all } v\in V^{k-1}.
\]
The domain $V_k^*$ consists of those $u\in W^k$ for which the functional
$v\mapsto \langle u,d^{k-1}v\rangle$ is bounded on $W^{k-1}$.

The adjoints form a chain complex, usually called the dual complex,
\[
	0
	\longleftarrow
	W^0
	\xleftarrow{d_1^*}
	W^1
	\xleftarrow{d_2^*}
	\cdots
	\xleftarrow{d_n^*}
	W^n
	\longleftarrow
	0.
\]
We write
\[	\f Z_k^*
	:=
	\ker d_k^*,
	\qquad
	\f B_k^*
	:=
	d_{k+1}^* V_{k+1}^*.
\]

\subsection{Harmonic forms and the Hodge decomposition}

\begin{definition}[Morphisms of Hilbert complexes]
	Let $(W,d)$ and $(W',d')$ be two Hilbert complexes with domain complexes $(V,d)$ and $(V',d)$, respectively.
	$f : W\to W'$ is called a \keyterm{morphism of Hilbert complexes} if we have a sequence of bounded linear maps
	$f^{k} : W^{k} \to W^{\prime k}$ such that $f(V^{k})\subseteq V^{\prime k}$ and $d^{\prime k} \circ f^{k} = f^{k+1} \circ d^{k}$ (i.e., they \keyterm{commute} with the differentials).
\end{definition}

The central analytic objects in the Hilbert complex are the harmonic forms.

\begin{definition}[Harmonic forms]\label{def:harmonic-forms-expanded}
	The space of harmonic $k$-forms is
	\[	\f H^k
		:=
		\f Z^k \cap \f Z_k^*
		=
		\left\{
			u\in V^k\cap V_k^*
			:
			d^k u=0,\;
			d_k^*u=0
		\right\}.
	\]
\end{definition}

Thus harmonic forms are simultaneously closed and coclosed.  They are the
analytic representatives of cohomology classes.  More precisely, for a closed
Hilbert complex, every cohomology class contains a unique harmonic
representative.  Equivalently,
\[	\f H^k \simeq \f Z^k/\f B^k.
\]

For a subcomplex $(S,d)$ of $(W,d)$, the inclusion maps $i^{k}: S^{k} \hookrightarrow W^{k}$ are morphisms of Hilbert complexes.
With the above, one can show the following \keyterm[Hodge decomposition!in Hilbert complexes]{weak Hodge decomposition}.

\begin{theorem}
[Hodge decomposition]\label{thm:hodge-decomposition-expanded}
	Let $(W,d)$ be a closed Hilbert complex.  Then
\[
		W^k
		=
		\f B^k
		\oplus
		\f H^k
		\oplus
		(\f Z^k)^{\perp_{W^k}},
\]
where the sum is orthogonal in $W^k$.  Equivalently,
	\[
    W^k
		=
		\f B^k
		\oplus
		\f H^k
		\oplus
		\f B_k^*.
	\]
    Moreover, at the level of the domain complex,
\[		V^k
		=
		\f B^k
		\oplus
		\f H^k
		\oplus
		\left((\f Z^k)^{\perp_{W^k}}\cap V^k\right).
	\]
\end{theorem}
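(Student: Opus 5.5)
We obtain the decomposition from two elementary orthogonal splittings of $W^k$, then identify the pieces using the adjoint relation. Since $d^k$ is a closed operator, its kernel $\f Z^k$ is closed in $W^k$. Indeed, if $z_j\in\f Z^k$ and $z_j\to z$ in $W^k$, then $(z_j,d^kz_j)=(z_j,0)\to(z,0)$, and closedness of the graph gives $z\in V^k$ with $d^kz=0$. Hence $W^k=\f Z^k\oplus(\f Z^k)^{\perp_{W^k}}$ orthogonally. Because the complex is closed, $\f B^k$ is a closed subspace of $\f Z^k$ (recall $d^2=0$), so $\f Z^k=\f B^k\oplus(\f Z^k\cap(\f B^k)^{\perp})$.

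The key identification is $(\f B^k)^{\perp}=\f Z_k^*$. By the definition of the adjoint, $u\perp \f B^k$ means $\langle u,d^{k-1}v\rangle=0$ for all $v\in V^{k-1}$. The functional $v\mapsto\langle u,d^{k-1}v\rangle$ is then identically zero, hence bounded, so $u\in V_k^*$ and $d_k^*u=0$. Conversely, every $u\in\f Z_k^*$ satisfies $\langle u,d^{k-1}v\rangle=\langle d_k^*u,v\rangle=0$ for all $v\in V^{k-1}$. Therefore $\f Z^k\cap(\f B^k)^\perp=\f Z^k\cap\f Z_k^*=\f H^k$. Combining the two splittings gives the first decomposition $W^k=\f B^k\oplus\f H^k\oplus(\f Z^k)^{\perp_{W^k}}$.

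For the second form we must show $(\f Z^k)^{\perp}=\f B_k^*$, and I expect this to be the main obstacle. The inclusion $\f B_k^*\subset(\f Z^k)^\perp$ is immediate: for $z\in\f Z^k$ we have $\langle d_{k+1}^*w,z\rangle=\langle w,d^kz\rangle=0$. For the reverse inclusion the plan is to apply the same reasoning one degree up to the adjoint operator. Because $d^k$ is closed and densely defined, $d_{k+1}^*$ is closed and densely defined with $(d_{k+1}^*)^*=d^k$, so $\ker d^k=(\range d_{k+1}^*)^\perp$, that is, $(\f Z^k)^\perp=\overline{\f B_k^*}$. It then remains to show that $\f B_k^*$ is closed. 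We use the closed range theorem: $\range d^k=\f B^{k+1}$ is closed by hypothesis, and for closed densely defined operators this is equivalent to closedness of the range of the adjoint. If one prefers to avoid citing that theorem, an alternative is to prove the Poincaré-type bound $\|v\|\le c\|d^kv\|$ on $V^k\cap(\f Z^k)^\perp$ via the bounded inverse theorem, and from it derive the corresponding bound for $d^*_{k+1}$ on $\f B^{k+1}$.

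Finally, for the domain complex, intersect the $W^k$ decomposition with $V^k$. Both $\f B^k$ and $\f H^k$ lie in $\f Z^k\subset V^k$. So if $u\in V^k$ is written as $u=b+h+r$, then $r=u-b-h\in V^k\cap(\f Z^k)^{\perp_{W^k}}$. This yields the last formula, and orthogonality is inherited from $W^k$.
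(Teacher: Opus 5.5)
Your proof is correct and is the standard Arnold--Falk--Winther argument: split $W^k=\f Z^k\oplus(\f Z^k)^{\perp}$, identify $(\f B^k)^{\perp}=\f Z_k^*$ so that $\f H^k=\f Z^k\cap(\f B^k)^{\perp}$, and use the closed range theorem to get $(\f Z^k)^{\perp}=\f B_k^*$. The paper states this theorem without a proof of its own and defers to exactly that literature. The only implicit step worth making explicit is that you use density of $V^{k-1}$ in $W^{k-1}$ when you pass from $\langle d_k^*u,v\rangle=0$ for all $v\in V^{k-1}$ to $d_k^*u=0$.
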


Consequently every $u\in W^k$ has a unique orthogonal decomposition
\[	u
	=
	d^{k-1}\phi
	+
	h
	+
	d_{k+1}^*\psi,
\]
where $h\in \f H^k$.  The three summands are called the exact, harmonic, and
coexact components, respectively.  This is the analytic version of the
topological fact that closed forms split into exact forms and representatives of
cohomology classes.

The Hodge decomposition also implies a Poincare inequality on the component
orthogonal to the kernel.  Namely, if $v\in (\f Z^k)^\perp\cap V^k$, then
\begin{equation}
	\|v\|_{V^k}
	\leq
	C_P \|d^k v\|_{W^{k+1}}.
\end{equation}
This inequality is one of the basic stability estimates for the mixed
Hodge--Laplace problem.

\subsection{The Hodge Laplacian}

The Hodge Laplacian combines the complex differential and its adjoint.

\begin{definition}[Hodge Laplacian]\label{def:hodge-laplacian-expanded}
	The Hodge Laplacian acting on $k$-forms is
\[		L
		=
		d d^*
		+
		d^* d.
	\]
    Its domain is
	\[	D_L
		=
		\left\{
			u\in V^k\cap V_k^*
			:
			d^k u\in V_{k+1}^*,
			\;
			d_k^*u\in V^{k-1}
		\right\}.
	\]
\end{definition}

The weak form of the Hodge--Laplace equation is: given $f\in W^k$, find
$u\in V^k\cap V_k^*$ such that
\[	\langle d u,d v\rangle
	+
	\langle d^*u,d^*v\rangle
	=
	\langle f,v\rangle
	\qquad
	\forall v\in V^k\cap V_k^*.
\]
The left-hand side is nonnegative, and
\[	\langle L u,u\rangle
	=
	\|du\|^2+\|d^*u\|^2.
\]
Therefore, harmonic forms are precisely the nullspace of the Hodge Laplacian, i.e. $\ker L
	=
	\f H^k$. This
fact explains why the equation $Lu=f$ is not invertible on all of $W^k$ when
$\f H^k$ is nontrivial.  The right-hand side must be compatible with the
harmonic nullspace, or else the equation must be augmented by a harmonic
component. The mixed form decomposition of the Hodge Laplace across the Hodge decomposition precisely addresses this challenge.

\paragraph{The mixed Hodge--Laplace problem}

The limitation of the primal formulation is that it requires approximation in the intersection
$V^k\cap V_k^*$, and conforming finite element subspaces of this intersection
are generally unavailable or inconvenient.  The remedy is to introduce the
auxiliary variable, denoted $\sigma
	=
	d^*u$, and to solve a mixed system posed across each component of the Hodge decomposition of the Hilbert complex.

The mixed Hodge--Laplace problem is: given $g\in W^k$, find
\[
	(\sigma,u,p)
	\in
	V^{k-1}\times V^k\times \f H^k
\]
such that
\begin{equation}\label{eqn:mixed-hodge-laplacian}
\begin{array}{rlrr}
	\langle \sigma,\tau\rangle
	-
	\langle u,d\tau\rangle
	&=
	0
	&&
	\forall \tau\in V^{k-1},
	\\
	\langle d\sigma,v\rangle
	+
	\langle du,dv\rangle
	+
	\langle p,v\rangle
	&=
	\langle g,v\rangle
	&&
	\forall v\in V^k,
	\\
	\langle u,q\rangle
	&=
	0
	&&
	\forall q\in \f H^k.
    \end{array}
	\end{equation}
The first equation is the weak statement of $\sigma=d^*u$.  The second equation
is the Hodge--Laplace equation with a harmonic correction.  The third equation
selects the solution orthogonal to the harmonic nullspace.

The variable $p$ is not an additional physical state in most applications. Testing the second equation with $q\in\f H^k$ shows that
\[
  p=P_{\f H}g,
\]
the harmonic component of the total data. It is introduced to make the gauge-fixed problem well-defined. This is the same harmonic structure that later appears in the finite-dimensional problem: the discrete harmonic space must approximate the continuous harmonic space in a way compatible with the cohomology of the domain.

The bilinear form associated with the mixed Hodge-Laplacian system is
\begin{align*}
	B(\sigma,u,p;\tau,v,q)
	:=
	&\,
	\langle \sigma,\tau\rangle
	-
	\langle u,d\tau\rangle
	+
	\langle d\sigma,v\rangle
	+
	\langle du,dv\rangle
	\\
	&\,
	+
	\langle p,v\rangle
	-
	\langle u,q\rangle .
\end{align*}
For a closed Hilbert complex this mixed problem satisfies an inf-sup condition,
and hence is well posed.  This is the continuous stability result that finite
element exterior calculus seeks to reproduce at the discrete level.

\subsection{de Rham Complex}\label{sec:simplicial}

In Euclidean three-dimensional vector notation, the Hodge Laplacian recovers
familiar elliptic operators.  For $0$-forms,
\[
	L u
	=
	-\divergence \grad u.
\]
For vector fields corresponding to $1$-forms,
\[	L u
	=
	-\grad \divergence u
	+
	\curl \curl u,
\]
up to the sign convention used for the scalar Laplacian.  For vector fields
corresponding to $2$-forms,
\[	L u
	=
	\curl \curl u
	-
	\grad \divergence u,
\]
again modulo the proxy convention.  The Hilbert complex
analysis is well defined as long as the sign convention carries through to the $dd^*$ and
$d^*d$ parts.

A popular and illustrative structure is the \textbf{de Rham complex}, arranged as,
\begin{gather*}
	\begin{CD}
		0
		\to
		\Lambda^0(\Omega)
		@>{d^0}>>
		\Lambda^1(\Omega)
		@>{d^1}>>
		\dots
		@>{d^{n-1}}>>
		\Lambda^n(\Omega)
		\to
		0
	\end{CD}
\end{gather*}
where $\Lambda^k(\Omega)$ is the space of smooth differential k-forms on the smooth manifold $\Omega$. In the specific case of a three-dimensional domain $\Omega\subseteq \mathbb{R}^3$, this is just the oft-studied differential complex
\[	\begin{CD}
		0
		\to
		C^{\infty}(\Omega)
		@>{\grad}>>
		C^{\infty}(\Omega)^3
		@>{\curl}>>
		C^{\infty}(\Omega)^3
		@>{\divergence}>>
		C^{\infty}(\Omega)
		\to
		0
	\end{CD}
\]
In order to transition to Hilbert complexes, we take the closures of the spaces as well as the operators in the respective $L^2$ norms.
For example, the $L^2$ closure of $\grad : C^\infty(\Omega) \rightarrow C^\infty(\Omega)^3$ is the operator $\grad : H^1(\Omega) \subseteq L^2(\Omega) \rightarrow L^2(\Omega)^3$.
This approach leads to the Hilbert complex
\begin{equation}\label{eq:rnderhamv}
	\begin{CD}
		0
		\to
		H^{1}(\Omega)
		@>{\grad}>>
		H(\curl;\Omega)
		@>{\curl}>>
		H(\divergence;\Omega)
		@>{\divergence}>>
		L^{2}(\Omega)
		\to
		0
	\end{CD}
\end{equation}

For the de Rham complex in three dimensions, the adjoint complex is the vector
calculus sequence
\begin{equation}\label{eq:adjoint-vector-derham}
	0
	\longleftarrow
	L^2(\Omega)
	\xleftarrow{-\divergence}
	\mathring H(\divergence;\Omega)
	\xleftarrow{\curl}
	\mathring H(\curl;\Omega)
	\xleftarrow{-\grad}
	\mathring H^1(\Omega)
	\longleftarrow
	0,
\end{equation}
where the rings indicate the closure of compactly supported smooth fields in the
corresponding graph norms. That is, $\mathring{H}^1(\Omega)$ denotes the closure of $C^{\infty}_0(\Omega)$ in $H^1(\Omega)$, and similarly for the other spaces The precise signs and boundary conditions depend on
the convention used to identify differential forms with vector fields, but the
main point is that $d$ and $d^*$ encode complementary information.  The operator
$d$ is tied to the complex and hence to topology; the operator $d^*$ depends on
the Hilbert space inner product and hence on metric and boundary data.


At each degree one has closed forms, exact forms, and harmonic forms.  In vector
notation these are
\begin{align*}
	\f B^0 &= \{0\},
&
	\f Z^0 &= \{u\in H^1(\Omega):\grad u=0\},\\
	\f B^1 &= \grad H^1(\Omega),
&
	\f Z^1 &= \{u\in H(\curl;\Omega):\curl u=0\},\\
	\f B^2 &= \curl H(\curl;\Omega),
&
	\f Z^2 &= \{u\in H(\divergence;\Omega):\divergence u=0\},\\
	\f B^3 &= \divergence H(\divergence;\Omega),
&
	\f Z^3 &= L^2(\Omega).
\end{align*}

The harmonic representatives are obtained by imposing the corresponding
coclosed condition. Recall that the cohomology groups are the quotients $\f H^k(\Omega)=\f Z^k/\f B^k$. Thus $\f H^1(\Omega)$ measures curl-free fields which are not gradients, while
$\f H^2(\Omega)$ measures divergence-free fields which are not curls.
 For the absolute de Rham complex above, they may be written
formally as
\begin{align*}
	\f H^0
	&=
	\{u\in H^1(\Omega): \grad u=0\},\\
	\f H^1
	&=
	\{u\in H(\curl;\Omega)\cap H_0(\divergence;\Omega):
		\curl u=0,\ \divergence u=0\},\\
	\f H^2
	&=
	\{u\in H(\divergence;\Omega)\cap H_0(\curl;\Omega):
		\divergence u=0,\ \curl u=0\},\\
	\f H^3
	&=
	\{u\in H^1_0(\Omega): \grad u=0\}.
\end{align*}
Here $H_0(\divergence;\Omega)$ denotes vanishing normal trace and
$H_0(\curl;\Omega)$ denotes vanishing tangential trace. The zero trace properties appear on account of the domain of the adjoint operator $d^*$.

\subsection{Simplicial complexes, cochain morphisms, and boundary conditions}
\label{subsec:simplicial-morphisms-boundary}

The de Rham complex is an infinite-dimensional cochain complex built from
smooth or weak differential forms.  Its finite-dimensional topological analogue
is obtained from a triangulation. Notably this triangulation presents the potential mesh that could be used as part of a finite element discretization. This Section explores how the topology of the triangulation relates to its cohomology, which ultimately connects, via the morphism to the domain complex, the dimension of the Harmonic forms of the Hilbert complex to the Betti numbers of the domain.

Let $\e T$ be a simplicial triangulation of
$\overline{\Omega}$ and let $\Delta_k(\e T)$ denote the set of oriented
$k$-simplices.  The vector space of simplicial $k$-chains is
\[	C_k(\e T)
	:=
	\left\{
		\sum_{\sigma\in\Delta_k(\e T)} a_\sigma \sigma
		:
		a_\sigma\in\R
	\right\}.
\]
The boundary operator
\[	\partial_k:C_k(\e T)\longrightarrow C_{k-1}(\e T)
\]
is defined on an oriented simplex by concatenating its oriented faces. 
Observing that its range lies in the set of loops, thus without boundaries themselves, it satisfies
\[	\partial_{k-1}\partial_k=0,
\]
and therefore the spaces $C_k(\e T)$ form a chain complex.  The associated
cochain complex is obtained by duality:
\[
	C^k(\e T)
	:=
	\operatorname{Hom}(C_k(\e T),\R).
\]
Its coboundary operator
\[	\delta^k:C^k(\e T)\longrightarrow C^{k+1}(\e T)
\]
is defined by
\[	(\delta^k c)(\sigma)
	=
	c(\partial_{k+1}\sigma),
	\qquad
	c\in C^k(\e T),\quad \sigma\in C_{k+1}(\e T).
\]
Since $\partial^2=0$, we have $\delta^{k+1}\delta^k=0$.  Thus the simplicial
cochains form a cochain complex in the same algebraic sense as the de Rham
complex.  The difference is that a simplicial cochain records one number per
oriented simplex, whereas a differential form is a field defined throughout
$\Omega$.

The bridge between these two complexes is given by integration.  For a smooth
$k$-form $\omega\in \Lambda^k(\Omega)$, define the de Rham map $\e R^k \omega \in C^k(\e T)$ by
\[	(\e R^k \omega)(\sigma)
	:=
	\int_{\sigma} \operatorname{tr}_{\sigma}\omega,
	\qquad
	\sigma\in\Delta_k(\e T),
\]
where $\operatorname{tr}_{\sigma}$ denotes the trace of the differential form on
the simplex $\sigma$.  Stokes' theorem implies the commutation relation
\[	\e R^{k+1} d\omega
	=
	\delta^k \e R^k\omega.
\]
Equivalently, the following diagram commutes:
\[
\begin{CD}
	\Lambda^k(\Omega) @>{d}>> \Lambda^{k+1}(\Omega)\\
	@V{\e R^k}VV                       @VV{\e R^{k+1}}V\\
	C^k(\e T) @>{\delta^k}>> C^{k+1}(\e T).
\end{CD}
\]
Thus $\e R$ is a cochain morphism from the de Rham complex to the simplicial
cochain complex.  This is the analytic expression of the fact that exterior
differentiation and taking the boundary of a simplex are dual operations.

There is also a morphism in the opposite direction, the Whitney map.  The
Whitney map
\[
\e W^k:C^k(\e T)\longrightarrow \Lambda^k(\Omega)
\]
assigns to a simplicial cochain a piecewise polynomial differential form.  If
$\sigma=[x_0,\ldots,x_k]$ is an oriented $k$-simplex and
$\lambda_i$ denotes the barycentric coordinate associated with $x_i$, the
Whitney form associated with $\sigma$ is
\[
\phi_\sigma
	=
	k!
	\sum_{i=0}^k
	(-1)^i
	\lambda_i\,
	d\lambda_0\wedge\cdots
	\wedge \widehat{d\lambda_i}
	\wedge\cdots
	\wedge d\lambda_k.
\]
Extending linearly gives $\e W^k$.  With the standard orientation conventions,
Whitney forms satisfy
\[	d\,\e W^k c
	=
	\e W^{k+1}\delta^k c,
	\qquad
	c\in C^k(\e T),
\]
so $\e W$ is also a cochain morphism.  Moreover,
\[
\e R^k \e W^k = I
	\quad\text{on } C^k(\e T).
\]
This identity says that integrating a Whitney form over the simplices recovers
the original cochain degrees of freedom.

Finite element exterior calculus may be viewed as placing finite element spaces
between these two complexes.  The Whitney forms give the lowest-order finite
element de Rham complex.  Higher-order FEEC spaces enlarge the local polynomial
spaces but preserve the two structural features above: they form subcomplexes of
the de Rham complex, and there exist bounded cochain projections onto them.
These commuting projections are the mechanism by which the topology and Hodge
decomposition of the continuous problem are transferred to the discrete problem;
see \cite{arnold2010finite}.

Boundary conditions are naturally expressed in the same language.  Let
$\Gamma\subseteq \partial\Omega$ be a union of boundary faces and let
$\e T_\Gamma$ be the induced subcomplex.  The relative simplicial cochain
complex may be represented as
\[
C^k(\e T,\e T_\Gamma)
	=
	\{c\in C^k(\e T): c(\sigma)=0
	\text{ for every }\sigma\in\Delta_k(\e T_\Gamma)\}.
\]
Equivalently, this is the quotient of all cochains by those supported on
$\e T_\Gamma$.  On the de Rham side, the corresponding relative complex is
obtained by imposing vanishing trace on $\Gamma$:
\[	\operatorname{tr}_{\Gamma}\omega=0.
\]
The de Rham map and Whitney map restrict to morphisms between the relative
de Rham complex and the relative simplicial cochain complex.  The cohomology is
then a relative cohomology, $\f H^k(\Omega,\Gamma)$, rather than the absolute cohomology $\f H^k(\Omega)$.

This distinction is important for PDE boundary conditions.  Essential boundary
conditions are imposed by choosing the domain complex.  In the de Rham complex,
a relative boundary condition on $\Gamma$ means that the trace of the
$k$-form vanishes on $\Gamma$.  In three-dimensional vector notation this gives
\begin{align*}
	k=0:\quad &u=0 \quad \text{on }\Gamma,\\
	k=1:\quad &n\times u=0 \quad \text{on }\Gamma,\\
	k=2:\quad &u\cdot n=0 \quad \text{on }\Gamma.
\end{align*}
Natural boundary conditions are not imposed in the trial space; rather, they
arise from the adjoint complex through integration by parts.  Thus the choice
between absolute and relative complexes is not merely a topological convention:
it determines the admissible traces, the adjoint operators, the harmonic space,
and hence the nullspace of the Hodge Laplacian.

Consequently, when the mixed Hodge--Laplace problem contains a nontrivial harmonic component $p\in \f H^k$, the space $\f H^k$ is the harmonic space associated with the chosen boundary
complex.  With no essential trace imposed, its dimension is the absolute Betti
number, that is $\dim \f H^k = b_k(\Omega)$.  With relative trace imposed on $\Gamma$, its dimension is
the relative Betti number $b_k(\Omega,\Gamma)$.  The same principle holds after
discretization: the discrete harmonic space is computed from the simplicial or
finite element subcomplex satisfying the same boundary conditions.
\subsection{Existence and uniqueness of the forward PDE solution}

The following inequality is an important result crucial to the stability of solutions to the mixed abstract Hodge Laplacian problem, as well as to its numerical approximation:
\begin{theorem}[Abstract Poincar\'e Inequality] If $(V,d)$ is a closed, bounded Hilbert complex, then there exists a constant $c_{P} >0$ such that for all $v \in \f Z^{k\perp}$,
	\begin{gather}
		\|v\|_{V} \leq c_{P} \|d^{k}v\|_{V}.
	\end{gather}
\end{theorem}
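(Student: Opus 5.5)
The plan is to derive the inequality from the bounded inverse theorem applied to the restriction of $d^k$ to the orthogonal complement of its kernel in the domain space. Work in the domain complex $(V,d)$ with the graph inner product. The kernel $\f Z^k=\ker d^k$ is closed in $V^k$, since $d^k:V^k\to W^{k+1}$ is bounded for the graph norm. So the orthogonal complement $\f Z^{k\perp}$, taken in $V^k$, is a closed subspace and hence a Hilbert space. If the complement is instead taken in $W^k$, the same argument applies to $\f Z^{k\perp_{W}}\cap V^k$, which is also closed in $V^k$. On $\f Z^k$ the graph inner product coincides with the $W^k$ inner product, because $d^k$ vanishes there, so the two complements agree; I will check this explicitly.

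Next, consider the restricted operator
\[
  d^k|_{\f Z^{k\perp}}:\f Z^{k\perp}\longrightarrow \f B^{k+1}=d^kV^k .
\]
It is injective, since its kernel is $\f Z^k\cap\f Z^{k\perp}=\{0\}$. It is surjective onto $\f B^{k+1}$, because every $v\in V^k$ splits as $v=z+w$ with $z\in\f Z^k$, $w\in\f Z^{k\perp}$, and then $d^kv=d^kw$. It is bounded, since $\|d^kw\|_{W^{k+1}}\le\|w\|_{V^k}$.

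The key step is the closedness hypothesis. Since the complex is closed, $\f B^{k+1}$ is closed in $W^{k+1}$ and therefore a Hilbert space. The bounded inverse theorem then gives a bounded inverse $\f B^{k+1}\to\f Z^{k\perp}$ with some norm $c_P$. This yields $\|v\|_V\le c_P\|d^kv\|_{W}$ for all $v\in\f Z^{k\perp}$.

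The right-hand side in the statement is written with $\|\cdot\|_V$. For $d^kv$ this agrees with the $W$-norm, because $d^{k+1}d^kv=0$ and so $\|d^kv\|_{V^{k+1}}=\|d^kv\|_{W^{k+1}}$. This reconciles the notation.

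The only real obstacle is ensuring that the range is a complete space. Without closedness of $\f B^{k+1}$ the inverse may be unbounded, and the inequality fails. The "bounded" hypothesis is simply that each $d^k$ is bounded as a map $V^k\to W^{k+1}$, which holds automatically in the graph norm. Everything else is routine.
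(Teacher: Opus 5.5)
Your proposal is correct. You restrict $d^k$ to $\f Z^{k\perp}$ to get a bounded bijection onto $\f B^{k+1}$, which is complete by the closedness hypothesis; since the $V$- and $W$-norms coincide on $\f B^{k+1}$, closedness in either space suffices, and Banach's bounded inverse theorem then gives $c_P$. The paper states this result without proof and defers to Arnold--Falk--Winther, and your argument is the standard bounded-inverse proof behind that citation.
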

In the case that $(V,d)$ is the domain complex associated with a closed Hilbert complex $(W,d)$, $(V,d)$ is again closed, and the additional graph inner product term vanishes: $\|d^{k} v\|_{V} = \|d^{k}v\|$.

We now develop some basic existence and uniqueness results for linear PDE constraints that arise in the optimal control problems considered here.
First, we consider the abstract version of the Hodge Laplacian and the associated abstract linear Hodge Laplace problem, and we review some existence and uniqueness results for this case.
We then consider a class of abstract semilinear problems that will allow us to consider optimal control problems with some types of nonlinear constraints.

The form $B$ is \emph{not} coercive but rather, for a closed Hilbert complex, satisfies an \keyterm{inf-sup condition} \cite{arnold2010finite,Babuska.I1971}:
there exists $\gamma > 0$ (the \keyterm{stability constant}) such that
\begin{gather*}
	\inf_{(\sigma,u,p)\neq 0} \sup_{(\tau,v,q)\neq 0} \frac{B(\sigma,u,p;\tau,v,q)}{\| (\sigma,u,p) \|_{\f X} \|(\tau, v, q)\|_{\f X}} =: \gamma > 0.
\end{gather*}
where we have defined a standard norm on products: $\|(\sigma,u,p)\|_{\f X} := \|\sigma\|_{V} + \|u\|_{V} + \|p\|$.
This is sufficient to guarantee well-posedness \cite{Babuska.I1971}. To summarize:
\begin{theorem}[Arnold, Falk, and Winther~\cite{arnold2010finite}, Theorem 3.1]\label{thm:cont-prob-wellposed}
	The mixed variational problem \eqref{eqn:mixed-hodge-laplacian} on a closed Hilbert complex $(W,d)$ with domain $(V,d)$ is well posed:
	the bilinear form $B$ satisfies the inf-sup condition with constant $\gamma$, so for any $F \in  (\f X^{k})^{*}$,
	there exists a unique solution $(\sigma,u,p)$ to \eqref{eqn:mixed-hodge-laplacian}, i.e., $B(\sigma,u,p;\tau,v,q) = F(\tau,v,q)$ for all $(\tau,v,q)\in\f X^{k}$, and moreover,
	\begin{gather*}
		\|(\sigma,u,p)\|_{\f X} \leq  \gamma^{-1} \|F\|_{\f X^{*}}.
	\end{gather*}
	The \keyterm{stability constant} $\gamma^{-1}$ depends only on the Poincar\'e constant.
\end{theorem}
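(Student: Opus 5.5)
The plan is to verify the inf-sup condition for $B$ on $\f X^k := V^{k-1}\times V^k\times \f H^k$ directly and then invoke the Babu\v{s}ka--Brezzi/Banach--Ne\v{c}as theorem \cite{Babuska.I1971}. That theorem converts an inf-sup bound with constant $\gamma$, together with nondegeneracy in the second argument, into existence, uniqueness, and the estimate $\|(\sigma,u,p)\|_{\f X}\le\gamma^{-1}\|F\|_{\f X^*}$. Since $B$ is symmetric up to sign changes in the test functions, the same construction gives the adjoint inf-sup bound, hence nondegeneracy. So everything reduces to this: for given $(\sigma,u,p)$, construct a test triple $(\tau,v,q)$ with
\[
B(\sigma,u,p;\tau,v,q)\ge c_1\|(\sigma,u,p)\|_{\f X}^2
\quad\text{and}\quad
\|(\tau,v,q)\|_{\f X}\le c_2\|(\sigma,u,p)\|_{\f X},
\]
with $c_1,c_2$ depending only on $c_P$.

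First decompose $u$ by the Hodge decomposition (Theorem~\ref{thm:hodge-decomposition-expanded}) at the level of $V^k$:
\[
u=u_{\f B}+u_{\f H}+u_\perp,\qquad u_{\f B}=d\rho,\quad \rho\in(\f Z^{k-1})^\perp\cap V^{k-1}.
\]
By the abstract Poincar\'e inequality we have $\|\rho\|_V\le c_P\|u_{\f B}\|$ and $\|u_\perp\|_V\le c_P\|du\|$. Then take the test triple
\[
\tau=\sigma-t\rho,\qquad v=u+d\sigma+p,\qquad q=u_{\f H}-p,
\]
with a small parameter $t>0$ to be fixed. Expanding $B$ term by term:
\begin{itemize}
\item $\langle\sigma,\tau\rangle-\langle u,d\tau\rangle+\langle d\sigma,v\rangle$ gives $\|\sigma\|^2+\|d\sigma\|^2+t\|u_{\f B}\|^2$, plus cross terms. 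Here $d\sigma\perp du$ because $d\sigma\in\f B^k\subset\f Z^k$, so $\langle du,d(d\sigma)\rangle=0$. The $\langle p,d\sigma\rangle$ term vanishes because $\f H^k\perp\f B^k$. The term $-t\langle\sigma,\rho\rangle$ is absorbed by Young's inequality using $\|\rho\|\le c_P\|u_{\f B}\|$.
\item $\langle du,dv\rangle$ gives $\|du\|^2$, which controls $u_\perp$ in the $V$-norm.
\item $\langle p,v\rangle-\langle u,q\rangle$ gives $\langle p,u\rangle+\|p\|^2-\langle u,u_{\f H}\rangle+\langle u,p\rangle$. Since $p\in\f H^k$, $\langle p,u\rangle=\langle p,u_{\f H}\rangle$, so this equals $\|p\|^2-\|u_{\f H}\|^2+2\langle p,u_{\f H}\rangle$.
\end{itemize}

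The remaining sign issue in the harmonic block is fixed by adjusting the harmonic parts: choose $q=-u_{\f H}-p$ and $v=u+d\sigma+p+u_{\f H}$ instead. The harmonic block then becomes $\|u_{\f H}\|^2+\|p\|^2$ up to cross terms, and these vanish by the orthogonality $\f H^k\perp\f B^k,\f B^*_k$. Choosing $t$ small, say $t\sim c_P^{-2}$, absorbs the remaining cross term. Summing the pieces then bounds $B$ below by a constant times
\[
\|\sigma\|_V^2+\|u_{\f B}\|^2+\|u_{\f H}\|^2+\|u_\perp\|_V^2+\|p\|^2\gtrsim\|(\sigma,u,p)\|_{\f X}^2.
\]
The norm bound on $(\tau,v,q)$ follows from the Poincar\'e bound on $\rho$.

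The main obstacle is the careful bookkeeping of cross terms in this expansion, particularly $-t\langle\sigma,\rho\rangle$ and the harmonic couplings, so that every orthogonality used really holds in $V^k$. This is where closedness of the complex enters, through the Hodge decomposition and the Poincar\'e constant. The final constant $\gamma$ is then explicit in $c_P$, which yields the statement's claim about the dependence of $\gamma^{-1}$.
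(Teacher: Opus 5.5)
\textbf{There is a genuine gap, and it is in the harmonic block you flagged as delicate.} Your repaired choice $v=u+d\sigma+p+u_{\mathfrak{H}}$, $q=-u_{\mathfrak{H}}-p$ does not give $\|u_{\mathfrak{H}}\|^2+\|p\|^2$. Use $\langle p,d\sigma\rangle=0$, $\langle p,u\rangle=\langle p,u_{\mathfrak{H}}\rangle$ and $\langle u,u_{\mathfrak{H}}\rangle=\|u_{\mathfrak{H}}\|^2$. Then $\langle p,v\rangle=\|p\|^2+2\langle p,u_{\mathfrak{H}}\rangle$ and $-\langle u,q\rangle=\|u_{\mathfrak{H}}\|^2+\langle p,u_{\mathfrak{H}}\rangle$, so the block equals $\|p\|^2+\|u_{\mathfrak{H}}\|^2+3\langle p,u_{\mathfrak{H}}\rangle$.

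The surviving term pairs two elements of $\mathfrak{H}^k$. The orthogonality $\mathfrak{H}^k\perp\mathfrak{B}^k,\mathfrak{B}^*_k$ you invoke therefore says nothing about it. Its coefficient is also too large to absorb, since $|\langle p,u_{\mathfrak{H}}\rangle|$ can reach $\tfrac12(\|p\|^2+\|u_{\mathfrak{H}}\|^2)$. Concretely, whenever $\mathfrak{H}^k\neq\{0\}$, take $(\sigma,u,p)=(0,-p,p)$ with $p\neq 0$. Then $\rho=0$, your recipe returns $(\tau,v,q)=(0,-p,0)$, and $B(\sigma,u,p;\tau,v,q)=\langle p,-p\rangle=-\|p\|^2<0$. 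With $u=sp$ and $s=(\sqrt5-3)/2$ it even gives $B=0$ for a nonzero test triple. So the bound $B\ge c_1\|(\sigma,u,p)\|_{\mathfrak{X}}^2$ fails for your construction, and no inf-sup constant is obtained.

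\textbf{The fix is to let the cross terms cancel against each other rather than rely on orthogonality.} Keep $v=u+d\sigma+p$ and take $q=p-u_{\mathfrak{H}}$, which is just the negative of your first attempt. Then $\langle p,v\rangle-\langle u,q\rangle=\langle p,u_{\mathfrak{H}}\rangle+\|p\|^2-\langle u_{\mathfrak{H}},p\rangle+\|u_{\mathfrak{H}}\|^2=\|p\|^2+\|u_{\mathfrak{H}}\|^2$. Together with $\tau=\sigma-c_P^{-2}\rho$, this is exactly the test triple used in the proof of Arnold--Falk--Winther's Theorem~3.1, which the paper cites without reproducing. With it, the rest of your outline goes through with $\gamma$ depending only on $c_P$:
\begin{itemize}
\item the exact cancellation of $\pm\langle u,d\sigma\rangle$;
\item Young's inequality on $c_P^{-2}\langle\sigma,\rho\rangle$;
\item $\|du\|$ controlling $u_\perp$ in $V$;
\item the bound on $\|(\tau,v,q)\|_{\mathfrak{X}}$;
\item the transposed condition via the identity $B(\sigma,u,p;\tau,v,q)=B(\tau,-v,q;\sigma,-u,p)$.
\end{itemize}
A minor slip: $\langle du,d(d\sigma)\rangle$ vanishes simply because $d\circ d=0$. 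Writing $d\sigma\perp du$ compares elements of different spaces.
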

Note that the general theory (e.g., \cite{Babuska.I1971,Ev98}) guarantees that a unique solution exists for \emph{any} bounded linear functional $F \in (\f X^{k})^{*}$,
which, in this case of product spaces, means that the problem is still well posed when there are other nonzero linear functionals on the right-hand sides of \eqref{eqn:mixed-hodge-laplacian} besides $\aip{f,v}$.

\subsection{Examples and Applications - Stationary Problems}

\paragraph{Hodge Laplace Problems}

We present two concrete examples in this setting.
To begin with, consider magnetotelluric modeling, for instance~\cite{CASTILLOREYES2022105030}. The system presents the prime model operator as the curl curl of an electric field, $\nabla\times\nabla\times E$, after simplification and substitution into Maxwell's equations. If we neglect the additional zero order term appearing in the literature of the operator of the form $\kappa E$, where $\kappa$ includes some physical constants, while considering the incorporation of externally controlled current injection defined by $z$, see, e.g.~\cite{li2020improved}, we obtain the strong form system,
\[
	\nabla\times\nabla\times E = f+z
\]
Now we can apply the reformulation of this equation into mixed form to obtain the canonical curl curl system. This system corresponds to the $\begin{CD}H(\curl;\Omega)
		@>{\curl}>>
		H(\divergence;\Omega)\end{CD}$ portion of the de Rham Hilbert complex~\eqref{eq:rnderhamv}, taken in mixed Hodge-Laplacian form.

Find $y\in H(\curl;\Omega)$, $\sigma\in H^1(\Omega)$ and $p\in\mathfrak H^1$ such that:
\begin{align}\label{eq:curlcurl}
	\begin{array}{rll}
		\langle \sigma, \tau \rangle - \langle y, \grad \tau \rangle
		 & = 0
		 & \forall \tau \in H^{1}
		\\
		\langle \grad\sigma,v \rangle + \langle \curl y, \curl v \rangle + \langle p,v \rangle
		 & = \langle f+z,v \rangle
		 & \forall v \in H(\curl)
		\\
		\langle y, q \rangle
		 & = 0
		 & \forall q \in \mathfrak H^1
	\end{array}
\end{align}
Recall that, in this case,
\[\f H^1:=\{p\in H(\curl;\Omega)\cap H_0(\divergence;\Omega):
		\curl p=0,\ \divergence p=0\}
        \]

An alternative mixed formulation of the same curl-curl system is given as finding $y\in H(\curl)$, $\sigma\in H^1$ and $p\in\f H^1$ satisfying:
\begin{align}
	\begin{array}{rll}
		\langle \curl y, \curl v \rangle
		+
		\langle \grad \sigma, v \rangle
		+
		\langle p,v \rangle
		 & = \langle f+z,v \rangle
		 & \forall v \in H(\curl)
		\\
		\langle y, \grad \tau \rangle
		 & = 0
		 & \forall \tau \in H^{1}
		\\
		\langle y, q \rangle
		 & = 0
		 & \forall q \in \mathfrak H^1
	\end{array}
\end{align}
It can be shown that the boundary conditions,
\[
	y\cdot n = 0,\quad (\curl y)\times n = 0,\,\text{on }\partial \Omega
\]
are natural in this formulation.

At level $k=2$, the corresponding de Rham Hodge Laplacian mixed formulation is to find $y\in H(\divergence)$, $\sigma\in H(\curl)$ and $p\in\f H^2$ satisfying:
\begin{align}
	\begin{array}{rll}
		\langle \sigma, \tau \rangle - \langle y, \curl \tau \rangle
		 & = 0
		 & \forall \tau \in H(\curl)
		\\
		\langle \curl\sigma,v \rangle + \langle \divergence y, \divergence v \rangle + \langle p,v \rangle
		 & = \langle z,v \rangle
		 & \forall v \in H(\divergence)
		\\
		\langle y, q \rangle
		 & = 0
		 & \forall q \in \mathfrak H^2
	\end{array}
\end{align}
The associated boundary conditions are,
\[
y\times n = 0,\,\quad \divergence y = 0,\,\quad \partial\Omega
\]

\paragraph{Other First Order Linear Systems}
We shall now present two other examples whose structure can be expressed in a Hodge complex, but not the de Rham complex.
First, the elasticity complex is associated to the structure:
\begin{gather*}
	0\to H^1(\Omega) \overset{J}{\to} H(\divergence;\Omega;\mathbb{S})\overset{\divergence}{\to} L^2(\Omega;\mathbb{R}^2)\to 0
\end{gather*}
where the second order differential operator $J$
is defined as $Jq=O(\grad\grad q)O^T$ with $O=\begin{pmatrix} 0 & 1 \\ -1 & 0 \end{pmatrix}$ and
$\sigma \in\mathbb{S}$ indicates the space of symmetric second-order tensors.

The elasticity problem can be written as,
\begin{align*}
	\begin{array}{rll}
		\langle A \sigma, \tau \rangle +\langle \divergence \tau,u\rangle
		 & = 0
		 & \forall \tau \in H(\divergence,\Omega;\mathbb{S})
		\\
		\langle \divergence \sigma,v \rangle
		 & = \langle z,v \rangle
		 & \forall v \in L^2(\divergence)
	\end{array}
\end{align*}
See Section 7 in~\cite{arnold2010finite}.

Consider also the Hodge-Dirac system, which has only one state variable. This problem appears in the study of Clifford algebras, quaternions, and applications in Quantum optics~\cite{leopardi2016abstract}.
\begin{align}
	\begin{array}{rll}
		\langle du,v \rangle + \langle u, d v \rangle + \langle p,v \rangle
		 & = \langle z,v \rangle
		 & \forall v \in V
		\\
		\langle u, q \rangle
		 & = 0
		 & \forall q \in \mathfrak H
	\end{array}
\end{align}

\subsection{Examples and Applications - Time-Dependent Problems}

\paragraph{Heat Equation}

We can generically consider the presence of an additional first order time dependent term to an otherwise elliptic system turning the equations to become parabolic.
Depending on the engineering context, control can be distributed, as a source term for the entire domain, or applied to the boundary.

To use a canonical textbook example, we consider the Heat equation. The Hodge heat equation is the straightforward parabolic extension of the Hodge Laplacian,
\[
y_t+(\delta d+d \delta)y = f\text{ on }\Omega,\qquad y(0)=y_0,\qquad \mathop{tr}\star dy=0\text{ on }\partial\Omega
\]

The Heat Equation in Hodge Laplace mixed weak form introduces $\sigma=\delta u$ and is defined as seeking $(\sigma,u,p):[0,T]\to H\Lambda^{k-1}\times H\Lambda^k\times \f H^k$ that satisfies: 
\begin{align}
	\begin{array}{rll}
		\langle \sigma, \tau \rangle - \langle d\tau, y \rangle
		 & = 0
		 & \forall \tau \in H\Lambda^{k-1}(\Omega),\, t\in[0,T]
		\\
		\langle y_t , v \rangle + \langle d\sigma, v \rangle+\langle dy,dv\rangle
		 & = \langle f+z, v\rangle
		 & \forall v \in H\Lambda^k(\Omega),\,t\in[0,T]
		\\
		\langle y, q \rangle
		 & = 0
		 & \forall q \in \mathfrak H^k
	\end{array}
\end{align}
The distinction from the semi-elliptic Hodge Laplacian is that these equations must now hold for all time $t$.

\paragraph{Maxwell complex and weighted harmonic spaces}

Let
\[
  V^0 \xrightarrow{d^0} V^1 \xrightarrow{d^1} V^2
  \xrightarrow{d^2} V^3
\]
be the absolute de Rham Hilbert complex on a bounded Lipschitz domain
$\Omega\subset\R^3$, with the corresponding adjoint complex carrying
the complementary natural traces. In vector proxies,
$d^0=\grad$, $d^1=\curl$, and $d^2=\divergence$.
Let $\varepsilon$ and $\mu$ be uniformly positive definite material
tensors. We write
\[
  \ip{u}{v}_{\varepsilon}
  :=\int_\Omega \varepsilon u\cdot v\,dx,
  \qquad
  \ip{b}{c}_{\mu^{-1}}
  :=\int_\Omega \mu^{-1}b\cdot c\,dx.
\]

The corresponding weighted harmonic spaces at degrees one and two are
denoted by
\[
  \HH^1_{\varepsilon}\subset V^1,
  \qquad
  \HH^2_{\mu^{-1}}\subset V^2,
\]
with dimensions $b_1$ and $b_2$, respectively. All harmonic spaces,
Betti numbers, period coordinates, and reconstruction maps below are
understood with respect to this same absolute complex.

Choose period maps
\begin{equation}
  \Piop_E:\HH^1_{\varepsilon}\longrightarrow\R^{b_1},
  \qquad
  \Piop_B:\HH^2_{\mu^{-1}}\longrightarrow\R^{b_2},
\end{equation}
that are isomorphisms. With noncontractible loops $\gamma_i$ and
independent two-cycles $\Sigma_j$, the canonical choices are
\begin{equation}
  (\Piop_E h_E)_i=\oint_{\gamma_i} h_E\cdot d\ell,
  \qquad
  (\Piop_B h_B)_j=\int_{\Sigma_j} h_B\cdot n\,dS.
\end{equation}
Thus $\Piop_E$ measures global electric circulation and $\Piop_B$
measures global magnetic flux. Let
\[
  H_E:\R^{b_1}\to\HH^1_{\varepsilon},
  \qquad
  H_B:\R^{b_2}\to\HH^2_{\mu^{-1}}
\]
be the inverse reconstruction maps,
$H_E=\Piop_E^{-1}$ and $H_B=\Piop_B^{-1}$.

\paragraph{Maxwell's Equations without a Harmonic Component}

Maxwell's equations provide a canonical time-dependent example of the
de Rham complex and admit a formulation compatible with the FEEC
framework. In particular, the evolution equations form a first-order
hyperbolic system whose differential operators follow the de Rham
sequence introduced above.

Following the exterior-calculus formulation in
\cite[Section~8.6]{arnold2010finite}, we write the classical Maxwell
system in vector proxies as
\[
\begin{array}{ll}
\varepsilon E_t-\curl(\mu^{-1}B)=-j_z,
&
B_t+\curl E=0,
\\[1mm]
\divergence B=0,
&
\divergence(\varepsilon E)=\rho.
\end{array}
\]
Here $E$ is the electric field, $B$ is the magnetic flux-density
two-form/vector proxy, $j_z$ is the total electric current density, and
$\rho$ is the charge density. The permittivity $\varepsilon$ and
permeability $\mu$ are assumed uniformly positive definite and may vary
spatially.

We write
\[
  j_z=j+\mathcal B z,
\]
where $j$ is a prescribed current density and
$\mathcal B:Z\to(V^1)'$ is the distributed-current control operator.
With this convention, the strong term $\curl(\mu^{-1}B)$ is represented
weakly through the pairing of $\mu^{-1}B$ with $\curl v$. Charge
conservation is expressed by
\[
  \dot\rho+\divergence j_z=0,
\]
with the divergence understood distributionally whenever the current
does not possess sufficient strong regularity. In exterior-calculus
language, the current and charge are represented at degrees two and
three, respectively, while the material coefficients enter through the
weighted inner products associated with $\varepsilon$ and $\mu^{-1}$.

Maxwell's equations traverses three levels of the de Rham complex. To this end, let,
\[
V^0:=H^1(\Omega),\,V^1:=H(\curl;\Omega),\,V^2:=H(\divergence;\Omega),\,V^3:=L^2(\Omega)
\]

With this defining compact notation, a mixed vector-proxy formulation introduces an auxiliary
degree-zero variable $\sigma$ and seeks
\[
  (\sigma,E,B):[0,T]\to V^0\times V^1\times V^2
\]
such that
\begin{align}
\begin{array}{rll}
\langle \sigma_t,\tau\rangle_{\alpha}
-\langle E,\grad\tau\rangle_{\varepsilon}
&=\langle\rho,\tau\rangle,
&\forall\tau\in V^0,\quad t\in[0,T],
\\[1mm]
\langle E_t,v\rangle_{\varepsilon}
+\langle\grad\sigma,v\rangle_{\varepsilon}
-\langle\mu^{-1}B,\curl v\rangle
&=-\langle j_z,v\rangle,
&\forall v\in V^1,\quad t\in[0,T],
\\[1mm]
\langle B_t,w\rangle_{\mu^{-1}}
+\langle\curl E,w\rangle_{\mu^{-1}}
&=0,
&\forall w\in V^2,\quad t\in[0,T],
\\[1mm]
\langle\dot\rho,\eta\rangle
&=-\langle\divergence j_z,\eta\rangle,
&\forall\eta\in V^3,\quad t\in[0,T].
\end{array}
\end{align}
All pairings above are understood as the corresponding weighted
$L^2$ inner products or natural duality pairings.

Since the topology-aware formulation below uses the absolute de Rham
complex fixed above, we do not additionally impose the relative/PEC
essential conditions
\[
E\times n=0,
\qquad
B\cdot n=0.
\]
The boundary terms are instead interpreted through the natural
conditions associated with the corresponding adjoint complex. If a
nonhomogeneous magnetic boundary flux is prescribed, one may introduce
a lifting
\[
  B=B_0+B_{\rm lift},
\]
where $B_0$ belongs to the homogeneous space of the selected complex
and $B_{\rm lift}$ carries the prescribed boundary data.

Note that this corresponds to,
\begin{align*}
\begin{array}{rll}
\langle \sigma_t,\tau\rangle_{\alpha}
-\langle E,d\tau\rangle_{\varepsilon}
&=\langle\rho,\tau\rangle,
&\forall\tau\in V^0,\quad t\in[0,T],
\\[1mm]
\langle E_t,v\rangle_{\varepsilon}
+\langle d\sigma,v\rangle_{\varepsilon}
-\langle B,dv\rangle_{\mu^{-1}}
&=-\langle j_z,v\rangle,
&\forall v\in V^1,\quad t\in[0,T],
\\[1mm]
\langle B_t,w\rangle_{\mu^{-1}}
+\langle dE,w\rangle_{\mu^{-1}}
&=0,
&\forall w\in V^2,\quad t\in[0,T],
\\[1mm]
\langle\dot\rho,\eta\rangle
&=-\langle dj_z,\eta\rangle,
&\forall\eta\in V^3,\quad t\in[0,T].
\end{array}
\end{align*}
Thus the vector-proxy operators $\grad$, $\curl$, and $\divergence$
are represented uniformly by the exterior derivative $d$ at successive
degrees of the complex.

\paragraph{Split physical fields}

We now separate the local Maxwell response from the physical harmonic
components. We decompose
\begin{equation}
  E=e+h_E=e+H_Ec_E,
  \qquad
  B=b+h_B=b+H_Bc_B,
\end{equation}
where
\[
  e(t)\perp_{\varepsilon}\HH^1_{\varepsilon},
  \qquad
  b(t)\perp_{\mu^{-1}}\HH^2_{\mu^{-1}},
\]
and
\[
  c_E(t)=\Piop_Eh_E(t)\in\R^{b_1},
  \qquad
  c_B(t)=\Piop_Bh_B(t)\in\R^{b_2}.
\]
The variables $e$ and $b$ are the PDE-resolved local fields, whereas
$c_E$ and $c_B$ are the physical topological states. In particular,
these coordinates are distinct from harmonic Lagrange multipliers
introduced to regularize a singular mixed operator.

Let $P_E^\perp$ and $P_B^\perp$ denote the weighted orthogonal
projections onto the complements of the corresponding harmonic spaces.
Using the total current $j_z=j+\mathcal Bz$ defined above, the local
Maxwell evolution is posed on the harmonic complements:
\begin{subequations}
\label{eq:maxwell-local-state}
\begin{align}
  \ip{\varepsilon\dot e}{v}
  -\ip{\mu^{-1}b}{dv}
  &=-\ip{P_E^\perp j_z}{v}
  &&\forall
  v\in V^1\cap(\HH^1_{\varepsilon})^{\perp_\varepsilon},
  \\
  \ip{\mu^{-1}\dot b}{w}
  +\ip{\mu^{-1}de}{w}
  &=0
  &&\forall
  w\in V^2\cap(\HH^2_{\mu^{-1}})^{\perp_{\mu^{-1}}}.
\end{align}
\end{subequations}
Here and below, time derivatives are understood in the appropriate
Bochner-space sense. The initial data satisfy
\begin{equation}
  e(0)=e_0,
  \qquad
  b(0)=b_0.
\end{equation}

The harmonic projection of the Amp\`ere--Maxwell equation is not lost
under this decomposition; rather, it becomes a finite-dimensional
evolution equation for $c_E$. To write this equation invariantly,
define the harmonic mass matrices
\begin{equation}
  M_E^H:=H_E^*\varepsilon H_E\in\R^{b_1\times b_1},
  \qquad
  M_B^H:=H_B^*\mu^{-1}H_B\in\R^{b_2\times b_2},
\end{equation}
which are symmetric positive definite. Define also the harmonic-current
coordinate maps
\begin{equation}
  C_Ez:=H_E^*\mathcal Bz,
  \qquad
  j_H(t):=H_E^*j(t).
\end{equation}
Projecting the electric equation onto $\HH^1_{\varepsilon}$ yields the
physical balance law
\begin{equation}
  M_E^H\dot c_E
  =
  -j_H-C_Ez+G_Ea_E+g_E.
  \label{eq:maxwell-top-E-ode}
\end{equation}
The optional actuator $a_E(t)\in\R^{m_E}$ permits direct control of
global electromotive or loop-current modes through
$G_E\in\R^{b_1\times m_E}$, while $g_E$ represents prescribed
topological forcing. If no separate topological electric actuator is
present, one sets
\[
  G_Ea_E+g_E=0,
\]
and the harmonic projection of the distributed current alone drives
$c_E$.

In the absence of magnetic current or boundary injection, the harmonic
projection of Faraday's law gives conservation of the magnetic
cohomology:
\begin{equation}
  M_B^H\dot c_B=0.
  \label{eq:maxwell-magnetic-flux-conservation}
\end{equation}
Consequently, a nonzero cavity flux is conserved by the local Maxwell
evolution. To make this flux an actuated control variable, one must
introduce a physically distinct topological magnetic actuator,
boundary-flux injection, or equivalent source. We represent such
finite-dimensional actuation by
\begin{equation}
  M_B^H\dot c_B=G_Ba_B+g_B,
  \label{eq:maxwell-top-B-ode}
\end{equation}
where $a_B(t)\in\R^{m_B}$. If the physical model contains no such
actuator, \eqref{eq:maxwell-top-B-ode} is omitted and
\eqref{eq:maxwell-magnetic-flux-conservation} is retained.

\begin{remark}[Why topology is genuinely dynamical]
Equations
\eqref{eq:maxwell-top-E-ode}--\eqref{eq:maxwell-top-B-ode}
are not auxiliary constraints introduced merely for numerical
invertibility. They govern physical circulation and flux states. The
dimensions of these dynamical subsystems are $b_1$ and $b_2$, while
their reachable directions are determined by the ranks of $G_E$,
$G_B$, and the harmonic projection of the distributed-current operator
$C_E$.
\end{remark}

\paragraph{Gauss laws and charge conservation}

The Maxwell evolution is supplemented by the Gauss constraints
\begin{equation}
  dB=0,
  \qquad
  d(\varepsilon E)=\rho,
  \label{eq:maxwell-gauss-laws}
\end{equation}
interpreted weakly with the natural boundary conditions of the selected
absolute complex. In vector proxies these relations correspond to
\[
  \divergence B=0,
  \qquad
  \divergence(\varepsilon E)=\rho.
\]
If the initial fields satisfy \eqref{eq:maxwell-gauss-laws} and the
current and charge satisfy the continuity equation
\begin{equation}
  \dot\rho+d j_z=0,
\end{equation}
then the Gauss constraints are propagated by the Maxwell evolution.
A compatible FEEC semidiscretization inherits the complex identity
$d^2=0$ and therefore preserves the corresponding discrete constraint
structure. If a four-field mixed Maxwell realization with an auxiliary
scalar enforcing Gauss' law is used instead of
\eqref{eq:maxwell-local-state}, this does not alter the physical
harmonic states or their finite-dimensional period equations.

\section{Domain Topology and the Hilbert Complex}

\subsection{Betti numbers of a triangulation}
\label{sec:betti}
 
The examples in this section are posed on domains whose
meshes are generated rather than assumed, so the topology of the
computational domain is an output of the construction. Since
$\dim\mathcal{H}^{k}_{h}=b_{k}$ is the fact the harmonic block rests
on, we compute the Betti numbers of the triangulation directly, at a
cost linear in the size of the mesh and with no linear algebra.
 
Writing $n_{V},n_{E},n_{F},n_{T}$ for the numbers of vertices, edges,
faces and tetrahedra, the Euler--Poincar\'e theorem gives
\begin{equation}\label{eq:euler}
  \chi(\mathcal{T})=n_{V}-n_{E}+n_{F}-n_{T}
  =b_{0}-b_{1}+b_{2}-b_{3}.
\end{equation}
The identity follows from the rank–nullity theorem on the chain complex of
Section~\ref{sec:simplicial}. With
$r_{k}=\operatorname{rank}\partial_{k}$ and
$z_{k}=\dim\ker\partial_{k}$ one has $n_{k}=r_{k}+z_{k}$ and
$b_{k}=z_{k}-r_{k+1}$, so the alternating sums of the $n_{k}$ and of
the $b_{k}$ differ by $\sum_{k}(-1)^{k}(r_{k}+r_{k+1})$, which
telescopes to zero because $\partial_{0}$ and $\partial_{n+1}$ vanish.
No manifold hypothesis is used here.
 
Equation \eqref{eq:euler} is one relation among four unknowns. For a
compact orientable three-manifold with nonempty boundary $b_{3}=0$,
since no closed three-cycle carries a fundamental class. And for a
compact three-manifold $\Omega\subset\mathbb{R}^{3}$, Alexander duality
gives
\begin{equation}\label{eq:alexander}
  b_{2}=n_{\partial}-b_{0},
\end{equation}
with $n_{\partial}$ the number of connected components of
$\partial\Omega$. To see \eqref{eq:alexander}, recall that
$\tilde H_{0}(S^{3}\setminus\Omega)\cong\tilde H^{2}(\Omega)$ and form
the bipartite graph whose vertices are the components of $\Omega$ and
of $S^{3}\setminus\Omega$, with one edge per component of
$\partial\Omega$. Every closed orientable surface in $S^{3}$ separates,
because $H_{2}(S^{3})=0$, so no edge lies on a cycle; a connected
acyclic graph is a tree, whence $n_{\partial}=b_{0}+c-1$ with $c$ the
number of complement components, and $b_{2}=c-1$. The formula returns
zero for a ball and for a solid torus, and one for a spherical shell.
 
Both $b_{0}$ and $n_{\partial}$ are connected-component counts, the
first on the graph of tetrahedra sharing a face and the second on the
graph of boundary facets glued along shared edges, so
\[
  b_{1}=b_{0}+b_{2}-\chi(\mathcal{T}).
\]
 
Since \eqref{eq:alexander} presupposes a manifold, the computation is
accompanied by a manifold test: every facet must lie in at most two
tetrahedra, and the star of every vertex and of every edge must be
connected. The second condition is not implied by the first. Two
tetrahedra meeting at a single vertex satisfy the facet condition and
do not form a manifold, and the same configuration is fatal for the
discretization, since N\'ed\'elec degrees of freedom live on edges and
a vertex-connected mesh decouples the discrete
$H(\operatorname{curl})$ spaces of the two pieces.
\subsection{Standing Domain Examples}

 \paragraph{L Shape}\label{ex:lshape}
Let $\Omega\subset\mathbb{R}^{3}$ be the L-shaped domain obtained by
removing the cube $[0,\tfrac12]^{3}$ from the unit cube $[0,1]^{3}$;
see Figure~\ref{fig:n1}. The domain is contractible, so
$b_{0}=1$ and $b_{1}=b_{2}=b_{3}=0$. The harmonic space
$\mathcal{H}^{1}$ is trivial.

\begin{figure}[h!]
  \centering
  \includegraphics[width=0.92\linewidth]{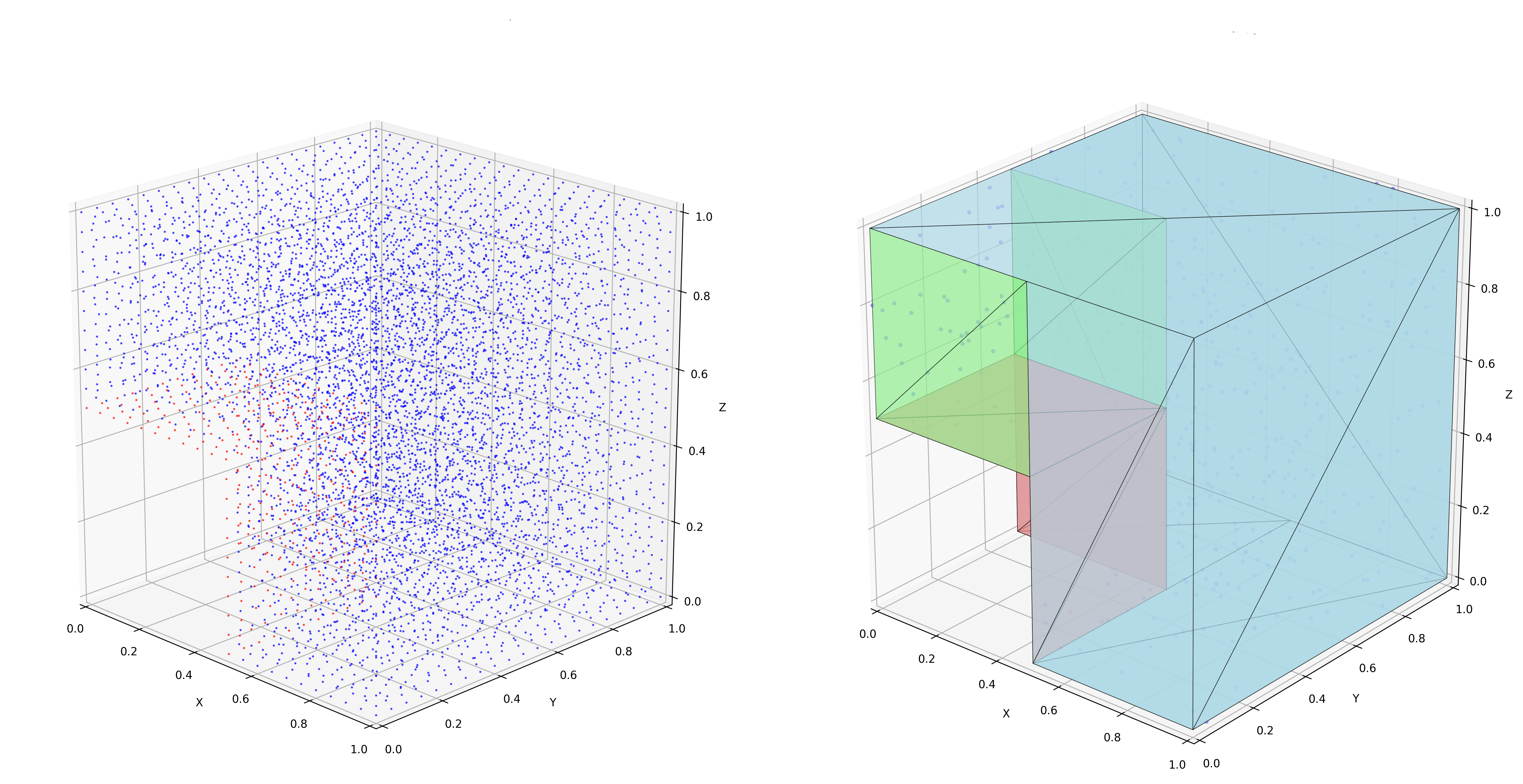}
  \caption{The L-shaped domain and generated mesh.}
  \label{fig:n1}
\end{figure}

\paragraph{Torus}\label{ex:torus}

Let $\Omega$ be the solid torus obtained by rotating a disk of radius
$\rho=0.15$, centered at distance $R=0.30$ from the vertical axis
through $(0.5,0.5)$, about that axis. Its Betti numbers are
$(b_{0},b_{1},b_{2},b_{3})=(1,1,0,0)$, so $\dim\mathcal{H}^{1}=1$ and
there is a single harmonic $1$-form circulating around the central
hole. See Figure~\ref{fig:torus}
 
 \begin{figure}[ht]
\centering
\includegraphics[width=.62\textwidth]{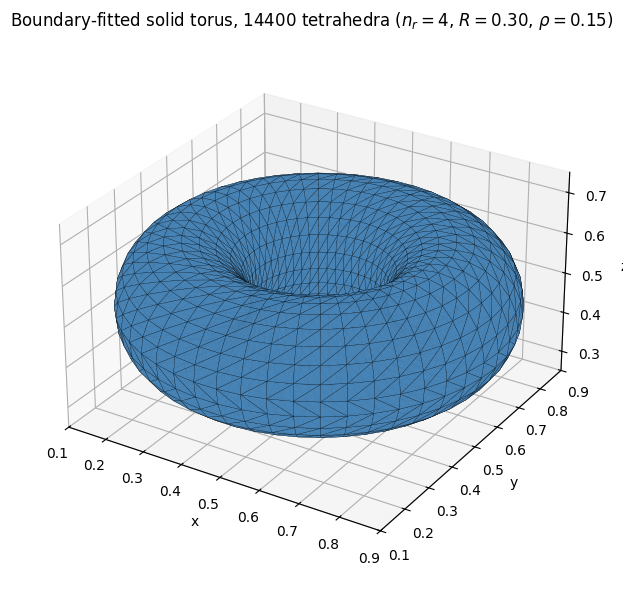}
\caption{Solid torus mesh. The single noncontractible loop gives one harmonic circulation mode and one scalar period coordinate.}\label{fig:torus}
\end{figure}

\paragraph{Double Torus}\label{ex:dtorus}

Let
\begin{equation}\label{eq:fig8-domain}
  \Omega=\bigl([0,1]^{2}\setminus(S_{1}\cup S_{2})\bigr)\times[0,\tfrac14],
  \qquad
  S_{i}=[c_{i}-\tfrac18,c_{i}+\tfrac18]\times[\tfrac38,\tfrac58],
\end{equation}
with $c_{1}=\tfrac14$ and $c_{2}=\tfrac34$: a slab with two square holes
punched through it. The domain retracts to a wedge of two circles, so
\[
  b_{0}=1,\qquad b_{1}=2,\qquad b_{2}=0,\qquad b_{3}=0,
\]
and $\dim\mathcal{H}^{1}=2$. Any connected genus-two solid, such as a
union of two overlapping solid tori, has the same homotopy type and the
same harmonic dimension. We use \eqref{eq:fig8-domain} because every
corner is a multiple of $\tfrac18$, so for $N$ a multiple of eight the
domain is exactly a union of cells of the uniform background
triangulation of the slab and the discrete domain equals $\Omega$ at
every level of refinement. The boundary of a union of interpenetrating
tori contains a quartic intersection curve that no grid represents
exactly, and carving it produces the volume oscillation documented for the torus in Section~\ref{ex:torus}. See Figure \ref{fig:doubletorus}.

\begin{figure}[ht]
\centering
\includegraphics[width=.7\textwidth]{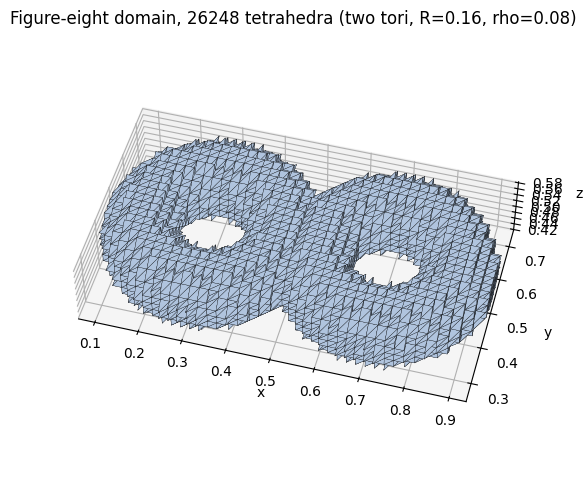}
\caption{Figure-eight domain. The two independent tunnels generate a two-dimensional period vector.}\label{fig:doubletorus}
\end{figure}

\paragraph{Ball with Cavity}\label{ex:Ball}

Let
\begin{equation}
  \Omega
  =
  \left\{
  x\in\R^3:
  r_{\rm in}<|x-c|<r_{\rm out}
  \right\},
  \qquad
  c=(1/2,1/2,1/2),
  \label{eq:shell-domain}
\end{equation}
with $0<r_{\rm in}<r_{\rm out}$.  Thus $\Omega$ is a spherical shell,
or equivalently a ball with a spherical cavity removed.  It is homotopy
equivalent to $S^2$, and hence
\[
  b_0=1,\qquad b_1=0,\qquad b_2=1,\qquad b_3=0.
\]
Consequently, the degree-two harmonic space is one dimensional,
\[
  \dim\HH^2=1.
\]
The relevant global observable is not circulation around a tunnel, as
in the torus, but flux through a closed surface surrounding the cavity.  The mesh is boundary-fitted: an
  icosahedron subdivided four times and projected to the sphere, then
  extruded radially through five layers, each prism split into three
  tetrahedra. Every boundary vertex lies exactly on one of the two
  spheres, so the volume deficit is one-signed and second order,
  $0.216\%$ at this resolution. The cavity wall, in dark red, is the
  boundary component that produces $b_{2}=1$,  see Figure \ref{fig:shell}.
\begin{figure}[htbp]
  \centering
  \includegraphics[width=0.65\textwidth]{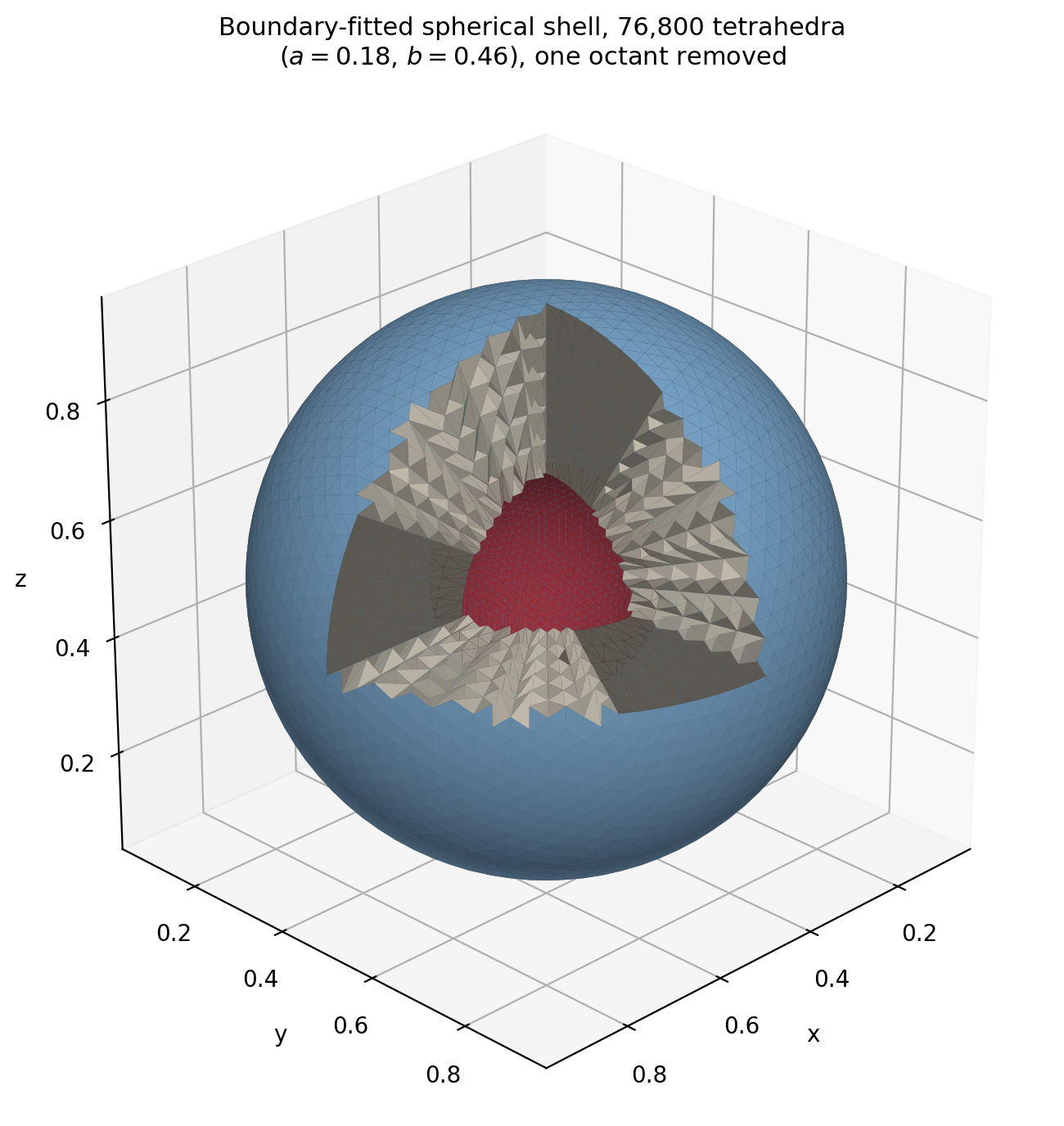}
  \caption{The spherical shell \eqref{eq:shell-domain} with one octant   removed to expose the interior.}
  \label{fig:shell}
\end{figure}

\subsection{Period and flux maps}\label{Pmaps}
Let $b_k=\dim\HH^k<\infty$. Choose a basis of homology cycles dual to $\HH^k$. The resulting period map is a linear isomorphism
\begin{equation}\label{eq:period-map}
 \Piop:\HH^k\to\R^{b_k}.
\end{equation}
For the de Rham complex in three dimensions:
\begin{itemize}
\item at degree $k=1$, if $\gamma_1,\dots,\gamma_{b_1}$ are noncontractible loops,
\[
 (\Piop h)_i=\oint_{\gamma_i} h\cdot d\ell;
\]
\item at degree $k=2$, if $\Sigma_1,\dots,\Sigma_{b_2}$ represent independent two-cycles,
\[
 (\Piop h)_i=\int_{\Sigma_i} h\cdot n\,dS.
\]
\end{itemize}
The map may be normalized by selecting harmonic basis functions $h_1,\dots,h_{b_k}$ satisfying $\Piop h_i=e_i$. Then
\begin{equation}
 h=\sum_{i=1}^{b_k}c_i h_i,
 \qquad c=\Piop h.
\end{equation}

\begin{remark}
The period map is defined only on the 
finite-dimensional harmonic space in the abstract theory. This avoids trace regularity issues that may arise if line or surface integrals are applied to arbitrary elements of $V^k$. In applications, the same functional may extend to a larger space, but that extension is not needed for the analysis below.
\end{remark}

The definition \eqref{eq:period-map} uses integration over cycles. That is
inconvenient in two ways. A representative of each homology class must be
exhibited, which on a generated mesh is awkward, and the traces of a general
element of $V^k$ onto a curve or a surface need not be defined. Both
difficulties disappear once the period is written as an inner product. The
following says that any coclosed field induces a functional on cohomology, and
that only its harmonic part matters.

\begin{theorem}[Period functionals]\label{thm:period-pairing}
Let $(W,d)$ be a closed Hilbert complex and let $J\in\f Z^{*}_{k}=\ker d^{*}_{k}$.
Then
\begin{equation}\label{eq:period-pairing}
 \Piop_J(v):=\ip{v}{J}_{W^k}
\end{equation}
vanishes on $\f B^{k}$ and therefore descends to a linear functional on
$\f Z^{k}/\f B^{k}$. For every $v\in\f Z^{k}$,
\begin{equation}\label{eq:period-both-harmonic}
 \Piop_J(v)=\ip{P_{\HH}v}{P_{\HH}J},
\end{equation}
so $\Piop_J$ depends on $J$ only through $P_{\HH}J$. Conversely every linear
functional on $\HH^{k}$ has the form \eqref{eq:period-pairing}. Given
$J_1,\dots,J_{b_k}\in\f Z^{*}_{k}$, the induced map
$\Piop=(\Piop_{J_1},\dots,\Piop_{J_{b_k}}):\HH^{k}\to\R^{b_k}$ is an
isomorphism if and only if $P_{\HH}J_1,\dots,P_{\HH}J_{b_k}$ are linearly
independent.
\end{theorem}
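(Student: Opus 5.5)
The plan is to work directly from the orthogonal Hodge decomposition of Theorem~\ref{thm:hodge-decomposition-expanded}, $W^k=\f B^k\oplus\HH^k\oplus\f B^*_k$, together with the identification $\f Z^*_k=\ker d^*_k=\HH^k\oplus\f B^*_k$. I would verify the latter first. Since $d^*_k$ is the adjoint of $d^{k-1}$, we have $\ker d^*_k=(\operatorname{range} d^{k-1})^\perp=(\f B^k)^\perp$, and this equals $\HH^k\oplus\f B^*_k$ by the decomposition. Closedness of the complex is used exactly here, so that $\f B^k$ is a closed summand.

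For the first claim, take $v=d^{k-1}\phi\in\f B^k$. The defining relation of the adjoint gives $\ip{d\phi}{J}=\ip{\phi}{d^*_kJ}=0$, so $\Piop_J$ vanishes on $\f B^k$. It therefore descends to $\f Z^k/\f B^k$, and in fact to $W^k/\f B^k$.

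For \eqref{eq:period-both-harmonic}, let $v\in\f Z^k$. Because $\f Z^k=\f B^k\oplus\HH^k$, we can write $v=b+P_{\HH}v$ with $b\in\f B^k$. Decompose $J=P_{\HH}J+J^*$ with $J^*\in\f B^*_k$; by the identification above, $J$ has no exact component. Then $\ip{v}{J}=\ip{b}{J}+\ip{P_{\HH}v}{P_{\HH}J}+\ip{P_{\HH}v}{J^*}$. The first term vanishes by the previous step, and the last vanishes by orthogonality of $\HH^k$ and $\f B^*_k$. This shows that $\Piop_J$ depends on $J$ only through $P_{\HH}J$.

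For the converse, a linear functional $\ell$ on the finite-dimensional inner-product space $\HH^k$ has a Riesz representer $J\in\HH^k\subset\f Z^*_k$ with $\ell(h)=\ip{h}{J}$.

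For the isomorphism criterion, restrict to $\HH^k$, where $\Piop(h)_i=\ip{h}{P_{\HH}J_i}$. The map goes between spaces of equal dimension $b_k$, so it is an isomorphism iff it is injective. Its kernel is the orthogonal complement in $\HH^k$ of $\operatorname{span}\{P_{\HH}J_i\}$. That kernel is trivial iff the span is all of $\HH^k$, which for $b_k$ vectors holds iff they are linearly independent. Equivalently, the Gram matrix $(\ip{h_j}{P_{\HH}J_i})$ with respect to a basis $h_j$ is nonsingular.

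The only delicate point is the identification $\ker d^*_k=(\f B^k)^\perp$ and its compatibility with the three-term decomposition. I would state it explicitly from the definition of the adjoint of a closed, densely defined operator; everything else is finite-dimensional linear algebra.
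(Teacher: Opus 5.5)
Your proof is correct and follows the paper's argument step for step: $\Piop_J$ kills $\f B^k$ by the adjoint relation, you split $v\in\f Z^k$ and $J$ through the Hodge decomposition, the converse uses a Riesz representer in $\HH^k\subseteq\f Z^*_k$, and the criterion is that $\Piop$ is injective on $\HH^k$ iff the $P_{\HH}J_i$ span it. One small correction to your commentary: $\ker d^*_k=(\f B^k)^{\perp}$ holds for the adjoint of any densely defined operator, so closedness of the complex is not used there, only in writing $\f Z^k=\f B^k\oplus\HH^k$ and $(\f B^k)^{\perp}=\HH^k\oplus\f B^*_k$ without closures.
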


\begin{proof}
Since the complex is closed, $\f B^{k\perp_W}=\f Z^{*}_{k}$, so
$\ip{d\phi}{J}=0$ for every $\phi\in V^{k-1}$ and $\Piop_J$ annihilates
$\f B^{k}$. If $v\in\f Z^{k}$ then $v=P_{\HH}v+b$ with $b\in\f B^{k}$ by
Theorem \ref{thm:hodge-decomposition-expanded}, and the first assertion gives
$\ip{v}{J}=\ip{P_{\HH}v}{J}$. Writing $J=P_{\HH}J+(I-P_{\HH})J$ and using that
the second summand is orthogonal to $\HH^{k}$ gives
\eqref{eq:period-both-harmonic}. For the converse, a linear functional on the
finite-dimensional space $\HH^{k}$ is represented by some $w\in\HH^{k}$, and
$\HH^{k}\subseteq\f Z^{*}_{k}$. The criterion for $\Piop$ follows from
\eqref{eq:period-both-harmonic}, since $\Piop$ is injective on $\HH^{k}$
precisely when the $P_{\HH}J_i$ span it.
\end{proof}

\begin{remark}
Formula \eqref{eq:period-pairing} is defined on all of $W^{k}$ and requires no
trace of $v$ on a lower-dimensional set. The restriction to $\HH^{k}$ in
\eqref{eq:period-map} is therefore a normalization convention and not a
regularity constraint.
\end{remark}
In three dimensions the adjoint complex \eqref{eq:adjoint-vector-derham} identifies
$\f Z^{*}_{k}$ with closed fields of complementary degree carrying a vanishing
trace. The condition $J\in\f Z^{*}_{k}$ therefore splits into one differential
and one boundary condition, and it is this splitting that makes the two cases
below look different at first sight.

\begin{corollary}[de Rham realization]\label{cor:period-derham}
Let $\Omega\subset\R^{3}$ be a bounded Lipschitz domain and let the absolute de
Rham complex \eqref{eq:rnderhamv} carry its adjoint complex
\eqref{eq:adjoint-vector-derham}.
\begin{itemize}
\item At $k=1$, every $J\in\mathring H(\divergence;\Omega)$ with
$\divergence J=0$ lies in $\f Z^{*}_{1}$, so $v\mapsto\ip{v}{J}$ annihilates
$\grad H^{1}(\Omega)$.
\item At $k=2$, every $J\in\mathring H(\curl;\Omega)$ satisfying $\curl J=0$ lies in $\f Z^{*}_{2}$. Hence the functional $v\mapsto\ip{v}{J}$ annihilates the exact space $\curl H(\curl;\Omega)$.
\end{itemize}
In both cases $\ip{\cdot}{J}$ is a period functional in the sense of
Theorem~\ref{thm:period-pairing}.
\end{corollary}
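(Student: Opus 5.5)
The plan is to reduce both bullets to identifying $\f Z^{*}_{k}$ for the absolute de Rham complex. We then apply Theorem~\ref{thm:period-pairing}, which already shows that any $J\in\f Z^{*}_{k}$ induces a functional annihilating $\f B^{k}$. So the only work is to show that the given $J$ lies in $\ker d_k^*$. Equivalently, we check directly that $\ip{d^{k-1}\phi}{J}=0$ for all $\phi\in V^{k-1}$. By the defining relation of the adjoint, this says precisely that $J\in V_k^*$ with $d_k^*J=0$: the functional $\phi\mapsto\ip{J}{d\phi}$ is then identically zero, hence bounded, and represented by $0$.

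For $k=1$, take $J\in\mathring H(\divergence;\Omega)$ with $\divergence J=0$ and $\phi\in H^1(\Omega)$. We will use the Green formula $\ip{\grad\phi}{J}=-\ip{\phi}{\divergence J}+\langle J\cdot n,\phi\rangle_{\partial\Omega}$. This is valid on Lipschitz domains, with the normal trace in $H^{-1/2}(\partial\Omega)$ paired against $\phi|_{\partial\Omega}\in H^{1/2}(\partial\Omega)$. To avoid trace theory, we instead argue by density. For $J_n\in C_0^\infty(\Omega)^3$ we have $\ip{\grad\phi}{J_n}=-\ip{\phi}{\divergence J_n}$ by the definition of the weak gradient. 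Passing to the limit in the $H(\divergence)$ graph norm gives $\ip{\grad\phi}{J}=-\ip{\phi}{\divergence J}=0$.

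For $k=2$ the argument is identical. Take $J\in\mathring H(\curl;\Omega)$ with $\curl J=0$ and $\phi\in H(\curl;\Omega)$. For smooth compactly supported $J_n$ the weak curl definition gives $\ip{\curl\phi}{J_n}=\ip{\phi}{\curl J_n}$. Closure in the graph norm yields $\ip{\curl\phi}{J}=\ip{\phi}{\curl J}=0$. In both cases this confirms, consistently with \eqref{eq:adjoint-vector-derham}, that $J\in V_k^*$ and $d_k^*J=\pm\divergence J$ or $\curl J$, which vanishes.

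The only delicate point is this density/integration-by-parts step. It amounts to justifying that the ringed spaces in \eqref{eq:adjoint-vector-derham} are contained in the adjoint domains, with the stated action. The density definition of $\mathring H$ makes this routine, and no trace regularity beyond that is needed. Closedness of the complex, which Theorem~\ref{thm:period-pairing} requires, holds for bounded Lipschitz domains by the standard compactness results cited via \cite{arnold2010finite}. With these points in place, Theorem~\ref{thm:period-pairing} gives the final sentence of the corollary immediately.
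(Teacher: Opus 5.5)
Your proof is correct and follows the paper's route. Both arguments reduce the corollary to showing $J\in\operatorname{dom}(d_k^*)$ with $d_k^*J=0$ and then invoke Theorem~\ref{thm:period-pairing}; the paper reads off $d_1^*=-\divergence$ on $\mathring H(\divergence;\Omega)$ and $d_2^*=\curl$ on $\mathring H(\curl;\Omega)$ from \eqref{eq:adjoint-vector-derham}, whereas you verify that inclusion directly by density of $C_0^\infty$ fields, which makes your argument self-contained.
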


\begin{proof}
At $k=1$ the operator $d^{*}_{1}$ is $-\divergence$ with domain
$\mathring H(\divergence;\Omega)$, and at $k=2$ the operator $d^{*}_{2}$ is
$\curl$ with domain $\mathring H(\curl;\Omega)$. In each case the stated
hypotheses say exactly that $J$ lies in the domain and in the kernel.
\end{proof}

\begin{example}[Solid torus]\label{ex:period-torus}
Let $\Omega$ be the solid torus of Section~\ref{ex:torus} and let
$J=\hat e_{\varphi}/(\pi\rho^{2})$. Then $\divergence J=0$ and $J\cdot n=0$ on
$\partial\Omega$, so Corollary~\ref{cor:period-derham} applies at $k=1$. Since
$b_1=1$, the functional $\ip{\cdot}{J}$ and the circulation
$\oint_{\gamma}\cdot\,d\ell$ are two functionals on a one-dimensional space and
are therefore proportional. Evaluating both on the exact generator
$h_1=\hat e_{\varphi}/(2\pi r)$, in torus coordinates $r=R+s\cos\phi$ with
$dV=r\,s\,ds\,d\phi\,d\theta$,
\[
 \ip{h_1}{J}=\frac{1}{2\pi^{2}\rho^{2}}\int_{\Omega}\frac{dV}{r}
 =\frac{1}{2\pi^{2}\rho^{2}}\cdot 2\pi^{2}\rho^{2}=1
 =\oint_{\gamma}h_1\cdot d\ell ,
\]
so the two agree on $\HH^{1}$. No decomposition of $\Omega$ into field lines is
needed.
\end{example}

\begin{example}[Extruded domain]\label{ex:period-extruded}
Let $D\subset\R^{2}$ be bounded Lipschitz with boundary components
$\Gamma_0,\dots,\Gamma_b$, let $\Omega=D\times[0,H]$, and let $a\in H^{1}(D)$
take the constant value $a_i$ on $\Gamma_i$. Put $J=\curl(a\hat e_z)
=\grad a\times\hat e_z$. Then $\divergence J=0$, and $J\cdot n=0$ on
$\partial\Omega$: on the faces $z=0$ and $z=H$ because $J$ is horizontal, and
on the lateral boundary because constancy of $a$ on each $\Gamma_i$ makes
$\grad a$ parallel to the planar normal, so that $J$ is tangential. Thus
$J\in\f Z^{*}_{1}$ and Corollary~\ref{cor:period-derham} applies. Integrating
$\nabla\cdot(v\times a\hat e_z)=a\hat e_z\cdot\curl v-v\cdot J$ over $\Omega$
for closed $v$, and orienting the loops by $t=\hat e_z\times n$,
\begin{equation}
 \frac1H\ip{v}{J}=-\sum_i a_i\oint_{\Gamma_i}v\cdot d\ell .
\end{equation}
Harmonicity of $a$ is not used. It serves the different purpose of placing $J$
itself in $\HH^{1}$, since $\curl J=-\Delta a\,\hat e_z$.
\end{example}

\begin{example}[Multiply bounded domain]\label{ex:period-cavity}
Let $\partial\Omega$ have components $\Gamma_0,\dots,\Gamma_b$ and let
$\psi\in H^{1}(\Omega)$ take the constant value $\psi_i$ on $\Gamma_i$. Then
$n\times\grad\psi=0$, so $J=\grad\psi\in\mathring H(\curl;\Omega)$ with
$\curl J=0$, and Corollary~\ref{cor:period-derham} applies at $k=2$.
Integration by parts against a divergence-free $v$ gives
\begin{equation}
 \ip{v}{\grad\psi}=\sum_i\psi_i\int_{\Gamma_i}v\cdot n\,dS .
\end{equation}
Taking $\psi$ equal to one on a cavity wall and zero on the remaining
components leaves a single term, the flux through that cavity.
\end{example}

\paragraph{Distributed and topological controls}

Let $z\in Z$ be a distributed control and $B:Z\to W^k$ a bounded control operator. Let $f\in W^k$ be a fixed forcing. The local mixed state solves the mixed Hodge Laplace problem with
\begin{equation}
 g=f+Bz.
\end{equation}
Separately, let $a\in\R^m$ be a topological actuator and impose
\begin{equation}
 \Piop h=Ga+c_0,
\end{equation}
where $G\in\R^{b_k\times m}$ and $c_0\in\R^{b_k}$. Since $\Piop$ is an isomorphism, this equation uniquely determines
\begin{equation}
 h(a)=\Piop^{-1}(Ga+c_0).
\end{equation}
The reachable topological period vectors form the affine space $c_0+\range(G)$. Full topological controllability holds precisely when $G$ has row rank $b_k$.

\begin{example}[Torus circulation]
For a solid torus and $k=1$, $b_1=1$. The scalar $\Piop h$ is the circulation around a generator of the central hole. A scalar actuator may represent an imposed electromotive force, loop voltage, or linked coil current, and $G$ reduces to a nonzero scalar gain.
\end{example}

\begin{example}[Cavity flux]
For a spherical shell and $k=2$, $b_2=1$. The scalar $\Piop h$ is the flux through the cavity. The topological actuator can represent a coil or imposed flux source that changes this global mode without being described by a local Hodge--Laplace forcing alone.
\end{example}

\section{Optimal Control Problem - Stationary Hodge Laplacian}

We would like to study an optimal control problem associated with the Hodge-Laplace equation.
There are different formulations, but they are not necessarily amenable for a Galerkin discretization.
In this section, we study the corresponding optimal control problem using the mixed Hodge--Laplace formulation.

The gauge-fixed variable $u$ in~\eqref{eqn:mixed-hodge-laplacian} does not contain a harmonic component.  We therefore use throughout the physical decomposition already introduced in \eqref{eq:intro-split}: the local PDE determines $(\sigma,u,p)$, while global constitutive, boundary, or actuator relations determine $h$.  This separation is essential because adding a target directly to $p$ would track the harmonic projection of the forcing rather than a physical global state.

We review this in several steps in order to motivate our saddle-point formulation. Let us temporarily assume that the space of harmonic forms is trivial and that no constraints are put on the control variable. Most naively, we could ask for the minimizer of
\begin{gather} 
	\min_{ \substack{ u \in \operatorname{dom}(\Delta) \\ z \in W^k } }
	\frac{1}{2} \| O u - y_d \|^{2} + \frac \alpha 2 \| z \|^{2}
	\quad
	\text{ subject to }
	\quad
	\Delta u = z.
\end{gather}
We easily see that this formulation is not amenable to a discretization. For example, if the Hodge-Laplacian happens to encode the Poisson problem, then this would require $u \in H^2$, which is not suitable for a simple finite element discretization.
For that reason, we could attempt a weak elliptic formulation such as
\begin{align*}
\min_{\substack{u \in V^k \cap V_k^\ast \\ z \in W^k}}
\quad &
\frac{1}{2}\|Ou-y_d\|^2+\frac{\alpha}{2}\|z\|^2,
\\
\text{s.t.}\quad &
\langle du,dv\rangle+\langle d^\ast u,d^\ast v\rangle
=\langle z,v\rangle
\quad
\forall v\in V^k\cap V_k^\ast .
\end{align*}
That approach is successful for problems such as Poisson, where the weak formulation has a natural Galerkin discretization.
However, it does not work for the general Hodge-Laplace equation
because we generally do not have a convenient conforming discretization of the space $V^k \cap V^\ast_k$. This is the same difficulty encountered in the discretization of the unconstrained Hodge--Laplace equation.



We study an optimal control problem based on the mixed formulation of the Hodge--Laplace equation. To remain faithful to physical engineering practice, we allow a secondary observation $R\sigma$ of the mixed auxiliary state. The compatibility multiplier $p\in\HH^k$ is not assigned a tracking target: it balances the harmonic component of the forcing and is generally not a physical state. By contrast, the separately reconstructed harmonic field $h\in\HH^k$ represents physically observable global circulation or flux and may therefore enter both the physical-state observation $O(u+h)$ and the period target $\Piop h$. Throughout the manuscript we assume that $O$ and $R$ are bounded linear observation operators. 

The purpose of the topology-dependent extension is to ensure that cohomology is not merely carried passively by the forward solver.  The standard mixed formulation fixes the local representative $u\perp\HH^k$ and introduces $p\in\HH^k$ only to balance the harmonic part of the forcing.  The physical harmonic field $h$, by contrast, carries the global circulation or flux modes that the local Hodge--Laplace operator cannot select.  We therefore use the period coordinates \eqref{eq:intro-period-map} and the actuator relation \eqref{eq:intro-top-control}.  At degree $k=1$ these coordinates are circulations along noncontractible loops; at degree $k=2$ they are fluxes through nontrivial surfaces.  The matrix $G$ specifies which of the $b_k=\dim\HH^k$ global modes can be actuated, and $c_0$ is the imposed background period vector.

The distributed control $z$ acts on the local PDE, while $a$ acts on the global harmonic modes. We consider objectives of the form
\begin{equation}\label{eq:intro-objective}
\begin{aligned}
J(\sigma,u,h,z,a)
={}&\frac{w_y}{2}\norm{O(u+h)-y_d}_{Y}^{2}
 +\frac{w_\sigma}{2}\norm{R\sigma-r_d}_{Y_\sigma}^{2} \\
&+\frac{w_\Pi}{2}\norm{\Piop h-\pi_d}_{\R^{b_k}}^{2}
 +\frac{\alpha}{2}\norm{z}_{Z}^{2}
 +\frac{\alpha_{\rm top}}{2}\lvert a\rvert^{2},
\end{aligned}
\end{equation}
The first two terms track local physical quantities, the third tracks global circulation or flux, and the final terms regularize the distributed and topological controls.

We assume throughout that
\begin{equation}\label{eq:positive-reg}
 \alpha>0,\qquad \alpha_{\rm top}>0,
\end{equation}
and $w_y,w_\sigma,w_\Pi\ge0$.

The tracking term $O(u+h)$ acts on the physical state. The period term $\Piop h$ directly measures global circulation or flux. The compatibility multiplier $p=P_{\HH}(f+Bz)$ is absent from the objective.

This formulation has four principal consequences. First, the mixed Hodge--Laplace multiplier $p$ retains its correct role as the harmonic projection of the total forcing, while the physical harmonic state is represented by $h$. Second, topology becomes an actual part of the control model: the number of independent global modes is $b_k$, and their reachability is determined by $\range(G)$. Third, the first-order system remains entirely in weak form, without imposing unjustified strong-domain assumptions on adjoint variables. Fourth, a compatible FEEC discretization preserves the local complex, the harmonic dimension, and the period coordinates of the topological subsystem.


Our constrained minimization problem then
asks for the minimizer of
\begin{align}\label{eq:pdeconlaplace}
    \min\limits_{\substack{\sigma\in V^{k-1},\,u\in V^k,\,p,h\in\HH^k\\ z\in Z_{ad},\,a\in A_{ad}}}
    J(\sigma,u,h,z,a)
\end{align}
subject to the constraints
\begin{align}\label{eq:pdeconlaplace:constraints}
	\begin{array}{rll}
		0
		=
		 &
		\langle \sigma, \tau \rangle - \langle u, d \tau \rangle
		 &
		\forall \tau \in V^{k-1}
		\\
		\langle f+Bz,v \rangle
		=
		 &
		\langle d\sigma,v \rangle + \langle du,dv \rangle + \langle p,v \rangle
		 &
		\forall v \in V^k
		\\
		0 =
		 &
		\langle u, q \rangle
		 &
		\forall q \in \mathfrak H^k
		\\
        0 = & \Piop h-Ga-c_0 \\
        z & \in Z_{ad}, & a\in A_{ad}
	\end{array}
\end{align}
Generally speaking, the constraints formalize that $(\sigma,u,p) \in V^{k-1} \times V^k \times \mathfrak{H}^k)$ solves a stationary partial differential equation whose right-hand side is given by $z \in W^k$.
In fact, we often require $z$ to belong to a constrained subset $Z_{ad}\subset W^k$, while the finite-dimensional actuator belongs to a nonempty closed convex set $A_{ad}\subset\R^m$.
The optimal control problem can be interpreted as the attempt to match the state variables to a given target variable while ensuring that they solve a partial differential equation with a not too large right-hand side.
Note that the target variables are not required to solve the same partial differential equation. Even if the target states solve the original PDE, it is not guaranteed that the minimizer of the optimal control problem will reproduce that solution.

The tracking weights $w_y,w_\sigma,w_\Pi$ may vanish individually, depending on which physical quantities are observed.  The two control regularizations are treated differently: throughout the analysis we assume $\alpha>0$ and $\alpha_{\rm top}>0$ as in \eqref{eq:positive-reg}.  These terms provide coercivity and strong convexity in the distributed and topological controls, respectively.  We do not treat the unregularized cases $\alpha=0$ or $\alpha_{\rm top}=0$ in this manuscript.

\subsection{Existence and Uniqueness of the Optimal Control Solution}

With a convex objective function, a linear PDE equation system, and a constraint confining the search space to a closed convex set,
it is standard to expect that a solution exists. Indeed, that is the case, and for completeness we include a proof.

Let
\[
(\sigma(z),u(z),p(z))=S(f+Bz),\qquad h(a)=\Piop^{-1}(Ga+c_0).
\]
The reduced objective is
\begin{equation}
 \widehat J(z,a)=J(\sigma(z),u(z),h(a),z,a).
\end{equation}
The local and topological state maps are linear-affine and bounded. They may remain coupled in the objective through the observation $O(u+h)$ even though their state equations are separate.

\begin{theorem}[Existence and uniqueness]\label{thm:existence}
Assume that $(W,d)$ is a closed Hilbert complex, $B$, $O$, and $R$ are
bounded, $\Piop:\HH^k\to\R^{b_k}$ is an isomorphism, $Z_{\rm ad}\subset Z$
and $A_{\rm ad}\subset\R^m$ are nonempty closed convex sets, and
\eqref{eq:positive-reg} holds. Then the optimal control problem
\eqref{eq:pdeconlaplace}--\eqref{eq:pdeconlaplace:constraints}
has a unique optimal control pair
$(z^*,a^*)\in Z_{\rm ad}\times A_{\rm ad}$.
The associated states $(\sigma^*,u^*,p^*,h^*)$ are uniquely determined.
\end{theorem}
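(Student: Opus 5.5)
The plan is to reduce to the reduced functional $\widehat J$ on the closed convex set $Z_{\rm ad}\times A_{\rm ad}\subset Z\times\R^m$. Then I will use the standard direct method: strong convexity plus continuity gives existence and uniqueness.

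\emph{Step 1 (state maps).} By Theorem~\ref{thm:cont-prob-wellposed}, for every $z\in Z$ the constraints \eqref{eq:pdeconlaplace:constraints} have a unique solution $(\sigma(z),u(z),p(z))=S(f+Bz)$. The right-hand side $(\tau,v,q)\mapsto\langle f+Bz,v\rangle$ is a bounded functional on $\f X^k$ with norm at most $\|f\|+\|B\|\|z\|_Z$, so $z\mapsto(\sigma(z),u(z),p(z))$ is affine and bounded from $Z$ to $V^{k-1}\times V^k\times\HH^k$. Since $\Piop$ is an isomorphism of finite-dimensional spaces, $h(a)=\Piop^{-1}(Ga+c_0)$ is the unique harmonic field satisfying the period constraint, and it is affine in $a$. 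Hence the constrained problem is equivalent to minimizing $\widehat J$ over $Z_{\rm ad}\times A_{\rm ad}$: every feasible tuple is determined by $(z,a)$.

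\emph{Step 2 (properties of $\widehat J$).} Each tracking term has the form $\frac{w}{2}\|L(z,a)-\ell\|^2$ with $L$ a bounded affine map. This covers $O(u(z)+h(a))-y_d$, $R\sigma(z)-r_d$, and $\Piop h(a)-\pi_d=Ga+c_0-\pi_d$. Such terms are continuous and convex, since $w_y,w_\sigma,w_\Pi\ge0$. The regularization $\frac\alpha2\|z\|_Z^2+\frac{\alpha_{\rm top}}2|a|^2$ is strongly convex on $Z\times\R^m$ because of \eqref{eq:positive-reg}. Therefore $\widehat J$ is continuous and strongly convex, with modulus $\min(\alpha,\alpha_{\rm top})$. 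It is also coercive, because $\widehat J(z,a)\ge\frac\alpha2\|z\|^2+\frac{\alpha_{\rm top}}2|a|^2$.

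\emph{Step 3 (existence and uniqueness).} Take a minimizing sequence in the nonempty set $Z_{\rm ad}\times A_{\rm ad}$. Coercivity makes it bounded, so a subsequence converges weakly in the Hilbert space $Z\times\R^m$. The limit stays in $Z_{\rm ad}\times A_{\rm ad}$, because closed convex sets are weakly closed (Mazur). A continuous convex functional is weakly lower semicontinuous, so the limit is a minimizer. Alternatively, strong convexity shows directly that minimizing sequences are Cauchy, which avoids weak compactness altogether.

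Uniqueness follows from strict convexity. If $(z_1,a_1)$ and $(z_2,a_2)$ were distinct minimizers, their midpoint would lie in the convex set and give a strictly smaller value. Finally, the states $(\sigma^*,u^*,p^*,h^*)$ are the images of $(z^*,a^*)$ under the single-valued maps of Step 1, so they are uniquely determined.

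The only real point of care is Step 1. There one must check that the forcing $\langle f+Bz,\cdot\rangle$ indeed lies in $(\f X^k)^*$ with a bound linear in $\|z\|$. One must also check that $p$ and $h$ are separate variables, so that fixing $p=P_{\HH}(f+Bz)$ does not conflict with the period constraint on $h$. Once this decoupling is clear, the rest is the standard convex-analysis argument.
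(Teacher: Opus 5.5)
Your proposal is correct and follows the same route as the paper's proof. Both reduce to $\widehat J$ on $Z_{\rm ad}\times A_{\rm ad}$, obtain coercivity from $\widehat J(z,a)\ge\frac{\alpha}{2}\|z\|_Z^2+\frac{\alpha_{\rm top}}{2}|a|^2$, apply the direct method for existence, use strong convexity from the two positive regularizations for uniqueness, and read off the states from the bounded state maps; you additionally supply details the paper leaves implicit, namely the bound on the forcing functional in $(\f X^k)^*$ via Theorem~\ref{thm:cont-prob-wellposed}, weak lower semicontinuity from convexity plus continuity, and weak closedness of the admissible set.
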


\begin{proof}
The reduced objective is continuous, weakly lower semicontinuous, and coercive because
\[
 \widehat J(z,a)\ge \frac{\alpha}{2}\norm{z}_Z^2+\frac{\alpha_{\rm top}}{2}|a|^2.
\]
The direct method gives existence on the nonempty closed convex admissible set. The two positive regularization terms make $\widehat J$ strongly convex in $(z,a)$, hence the minimizer is unique. The bounded state maps then give unique associated states.
\end{proof}

\begin{proposition}[Topological reachability]\label{prop:reachability}
A target period $\pi_d$ is exactly reachable by the topological state equation if and only if
\[
 \pi_d-c_0\in\range(G).
\]
The number of independently controllable topological modes is $\operatorname{rank}(G)\le b_k$. Full control of all cohomology modes is possible if and only if $\operatorname{rank}(G)=b_k$.
\end{proposition}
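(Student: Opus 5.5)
The plan is to reduce everything to linear algebra in $\R^{b_k}$ via the period isomorphism $\Piop:\HH^k\to\R^{b_k}$ fixed in Section~\ref{Pmaps}. We read ``exactly reachable'' as: there exist an actuator $a\in\R^m$ and a harmonic field $h\in\HH^k$ satisfying the topological state equation $\Piop h=Ga+c_0$ with $\Piop h=\pi_d$. We treat the unconstrained actuator set here. If $A_{\rm ad}$ is a proper subset, the same argument gives the condition $\pi_d-c_0\in G(A_{\rm ad})$, and we would note this in a remark.

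For the equivalence, first suppose $\pi_d$ is reachable. Then $\pi_d=\Piop h=Ga+c_0$ for some $a$, so $\pi_d-c_0=Ga\in\range(G)$. Conversely, if $\pi_d-c_0=Ga$ for some $a\in\R^m$, set $h:=\Piop^{-1}(Ga+c_0)$. This is well defined and unique because $\Piop$ is an isomorphism; it is exactly the map $h(a)$ introduced before Theorem~\ref{thm:existence}. By construction $\Piop h=\pi_d$. Hence the reachable set of periods is the affine subspace $c_0+\range(G)$.

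For the counting statement, the reachable harmonic fields form the affine set $\Piop^{-1}(c_0)+\Piop^{-1}(\range G)$. Since $\Piop^{-1}$ is a linear isomorphism, the direction space has dimension $\dim\range(G)=\operatorname{rank}(G)$. This dimension is what we mean by the number of independently controllable cohomology modes. Because $\range(G)\subseteq\R^{b_k}$, we get $\operatorname{rank}(G)\le b_k$; this also agrees with $\dim\HH^k=b_k$.

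Full control means every $\pi_d\in\R^{b_k}$ is reachable. By the equivalence, this holds if and only if $c_0+\range(G)=\R^{b_k}$, that is, $\range(G)=\R^{b_k}$, that is, $\operatorname{rank}(G)=b_k$ (full row rank).

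The only delicate point is the interpretive one: which notion of reachability is meant, and how admissibility constraints on $a$ enter. Once $A_{\rm ad}=\R^m$ is fixed, each step is an immediate consequence of $\Piop$ being an isomorphism.
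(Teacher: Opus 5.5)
Your proof is correct and follows the paper's own reasoning. The paper gives no separate proof; it justifies the statement in the paragraph on distributed and topological controls by noting that $\Piop$ is an isomorphism, so that $h(a)=\Piop^{-1}(Ga+c_0)$, the reachable periods form the affine space $c_0+\range(G)$, and full controllability holds exactly when $G$ has row rank $b_k$. Your explicit restriction to unconstrained actuators, and your remark that a proper $A_{\rm ad}$ would replace $\range(G)$ by $G(A_{\rm ad})$, usefully states an assumption the paper leaves implicit.
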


\begin{remark}
This proposition is the minimum point at which topology changes the control problem rather than merely the forward linear algebra. The Betti number determines the dimension of the global state, while the actuator matrix determines which of those modes can be reached.
\end{remark}

\begin{proposition}[Compatibility multiplier and harmonic-inert control]
\label{prop:p-inert}
For every feasible tuple in \eqref{eq:pdeconlaplace:constraints},
\[
 p=P_{\HH}(f+Bz).
\]
Hence the compatibility multiplier is independent of the physical harmonic state
$h$ and of the topological actuator $a$.

Assume in addition that the admissible distributed-control set is
\emph{harmonic-invariant}: for every $z\in Z_{\rm ad}$ there exists
$\widetilde z\in Z_{\rm ad}$ such that
\[
 (I-P_{\HH})B\widetilde z=(I-P_{\HH})Bz,\qquad
 P_{\HH}B\widetilde z=0,\qquad
 \norm{\widetilde z}_{Z}\le \norm{z}_{Z},
\]
with strict inequality whenever $P_{\HH}Bz\neq0$. Then every optimal control
satisfies
\[
 P_{\HH}Bz^{\star}=0,\qquad p^{\star}=P_{\HH}f.
\]
In particular, if $P_{\HH}f=0$, then $p^{\star}=0$. This hypothesis holds, for
example, for $B=I$ with the unconstrained control space
$Z_{\rm ad}=W^{k}$, and also when the control bounds are inactive and the
harmonic component can be removed without violating admissibility.
\end{proposition}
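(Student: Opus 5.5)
The plan has three parts: identify $p$ from the state equation, exploit the fact that $\widehat J$ does not see the harmonic part of the forcing, and then use uniqueness from Theorem~\ref{thm:existence}.

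\textbf{Step 1: identify $p$.} Take any feasible tuple and test the second constraint in \eqref{eq:pdeconlaplace:constraints} with $v=q\in\HH^k\subset V^k$. Since $dq=0$, we get $\langle du,dq\rangle=0$. Since $q\in\f Z^*_k=\ker d_k^*$ and $\sigma\in V^{k-1}$, we get $\langle d\sigma,q\rangle=\langle\sigma,d^*q\rangle=0$. What remains is $\langle p,q\rangle=\langle f+Bz,q\rangle$ for all $q\in\HH^k$. Because $p\in\HH^k$, this says exactly $p=P_{\HH}(f+Bz)$. The constraint $\Piop h=Ga+c_0$ does not involve $(\sigma,u,p)$, so $p$ is independent of $h$ and $a$.

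\textbf{Step 2: the local state ignores $P_{\HH}g$.} Next I will show that $(\sigma,u)$ depends on the data $g=f+Bz$ only through $(I-P_{\HH})g$. Suppose $(\sigma,u,p)$ solves \eqref{eqn:mixed-hodge-laplacian} with datum $g$, and $g'-g\in\HH^k$. Then $(\sigma,u,p+(g'-g))$ satisfies all three equations with datum $g'$, since only the second equation changes, and it changes by $\langle g'-g,v\rangle$ on both sides. By the uniqueness part of Theorem~\ref{thm:cont-prob-wellposed}, this is the solution for $g'$. So $(\sigma,u)$ is unchanged, and only $p$ shifts.

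\textbf{Step 3: compare $z$ with $\widetilde z$.} Let $z\in Z_{\rm ad}$ and let $\widetilde z$ be given by the harmonic-invariance hypothesis. Then $f+B\widetilde z$ and $f+Bz$ differ by $P_{\HH}B\widetilde z-P_{\HH}Bz=-P_{\HH}Bz\in\HH^k$. By Step 2, $\sigma(\widetilde z)=\sigma(z)$ and $u(\widetilde z)=u(z)$. The state $h(a)$ does not depend on $z$ at all. The objective contains $p$ nowhere, so every tracking term in $\widehat J$ is identical for $(z,a)$ and $(\widetilde z,a)$. Only the regularization term differs, and
\[
\widehat J(\widetilde z,a)-\widehat J(z,a)=\tfrac\alpha2\bigl(\norm{\widetilde z}_Z^2-\norm{z}_Z^2\bigr)\le 0 ,
\]
with strict inequality when $P_{\HH}Bz\ne0$, because $\alpha>0$.

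\textbf{Step 4: conclude.} Apply Step 3 to the unique optimum $(z^\star,a^\star)$ from Theorem~\ref{thm:existence}. If $P_{\HH}Bz^\star\neq0$, then $(\widetilde z^\star,a^\star)$ is admissible and strictly better, which contradicts optimality. Hence $P_{\HH}Bz^\star=0$, and Step 1 gives $p^\star=P_{\HH}f$. This vanishes when $P_{\HH}f=0$.

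\textbf{The example $B=I$, $Z_{\rm ad}=W^k$.} Take $\widetilde z=(I-P_{\HH})z$. Then $(I-P_{\HH})\widetilde z=(I-P_{\HH})z$ and $P_{\HH}\widetilde z=0$. By Pythagoras, $\norm{\widetilde z}^2=\norm z^2-\norm{P_{\HH}z}^2$, which gives the strict decrease whenever $P_{\HH}z\neq0$. The inactive-bounds case is covered by the same construction, provided removing the harmonic part keeps $\widetilde z$ admissible, which is what the hypothesis states.

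The main obstacle is Step 2: one must check that the shifted triple really satisfies the first and third equations, and that uniqueness applies. Both are immediate because $\HH^k$ appears only through $p$ and through the orthogonality constraint on $u$.
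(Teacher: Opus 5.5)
Your proposal is correct and follows the paper's proof. You test the state equation with harmonic $q$ to obtain $p=P_{\HH}(f+Bz)$, then note that replacing $z$ by $\widetilde z$ changes the forcing only by a harmonic term, so $(\sigma,u)$, $h$ and all tracking terms are unchanged while $\tfrac\alpha2\norm{z}_Z^2$ strictly decreases. Your Step~2, which shifts $p$ by $g'-g$ and invokes uniqueness from Theorem~\ref{thm:cont-prob-wellposed}, and your Pythagoras check for $B=I$ make explicit two points that the paper only asserts.
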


\begin{proof}
Testing the second equation of \eqref{eq:pdeconlaplace:constraints} with
$q\in\HH^{k}$ gives
\[
 \ip{f+Bz}{q}=\ip{p}{q},
\]
because $q$ is harmonic, so it is orthogonal to $d\sigma$ and satisfies
$dq=0$. Since $p\in\HH^{k}$, this proves
$p=P_{\HH}(f+Bz)$.

For the second assertion, replacing $z$ by $\widetilde z$ leaves the
nonharmonic forcing $(I-P_{\HH})(f+Bz)$ unchanged. Hence the gauge-fixed local
states $\sigma$ and $u$ are unchanged, while $h$ depends only on $a$. The
objective therefore changes only through the regularization term
$\alpha\norm{z}_{Z}^{2}/2$. Since $\alpha>0$, optimality rules out
$P_{\HH}Bz^{\star}\neq0$.
\end{proof}
\subsection{Optimality Conditions}

We derive the optimality system entirely in weak form. This is important because the formal strong adjoint operators would require additional domain and regularity assumptions that are neither needed nor generally available in FEEC spaces.

Introduce multipliers
\[
 \lambda\in V^{k-1},\qquad \mu\in V^k,\qquad \nu\in\HH^k,
 \qquad \xi\in\R^{b_k}.
\]
Using the sign convention
\begin{equation*}
\begin{array}{rl}
 L={}&J
-\big(\ip{\sigma}{\lambda}-\ip{u}{d\lambda}\big)
-\big(\ip{d\sigma}{\mu}+\ip{du}{d\mu}+\ip{p}{\mu}-\ip{f+Bz}{\mu}\big)\\
&-\ip{u}{\nu}+\xi\cdot(\Piop h-Ga-c_0),
\end{array}
\end{equation*}
stationarity with respect to the state variables gives the adjoint equations.

\begin{theorem}[Weak optimality system]\label{thm:optimality}
A feasible point $(\sigma,u,p,h,z,a)$ is optimal if and only if there exist $(\lambda,\mu,\nu,\xi)$ such that, in addition to the forward PDE system,
\begin{subequations}
\begin{align}
 w_\sigma\ip{R\sigma-r_d}{R\tau}
 -\ip{\lambda}{\tau}-\ip{\mu}{d\tau}&=0
 &&\forall\tau\in V^{k-1},\\
 w_y\ip{O(u+h)-y_d}{Ov}
 +\ip{v}{d\lambda}-\ip{d\mu}{dv}-\ip{v}{\nu}&=0
 &&\forall v\in V^k,\\
 \ip{\mu}{q}&=0
 &&\forall q\in\HH^k,\label{eq:adj3}\\
 w_y\ip{O(u+h)-y_d}{O\eta}
 +w_\Pi(\Piop h-\pi_d)\cdot\Piop\eta
 +\xi\cdot\Piop\eta&=0
 &&\forall\eta\in\HH^k,\label{eq:adj4}
\end{align}
\end{subequations}
and the variational inequalities
\begin{subequations}\label{eq:control-vi}
\begin{align}
 \ip{\alpha z+B^*\mu}{\widetilde z-z}_Z&\ge0
 &&\forall\widetilde z\in Z_{\rm ad},\label{eq:zvi}\\
 (\alpha_{\rm top}a-G^T\xi)\cdot(\widetilde a-a)&\ge0
 &&\forall\widetilde a\in A_{\rm ad}.\label{eq:avi}
\end{align}
\end{subequations}
Because the problem is convex, these conditions are both necessary and sufficient.
\end{theorem}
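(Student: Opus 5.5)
Since the problem is convex with affine constraints and a closed convex admissible set, I will work through the reduced objective $\widehat J(z,a)$ and use the standard characterization of a minimizer of a convex Fréchet-differentiable functional over a closed convex set. By Theorem~\ref{thm:existence}, $\widehat J$ is strongly convex. A pair $(z,a)\in Z_{\rm ad}\times A_{\rm ad}$ is therefore optimal if and only if
\[
\widehat J'(z,a)[(\widetilde z-z,\widetilde a-a)]\ge0
\qquad\text{for all }(\widetilde z,\widetilde a)\in Z_{\rm ad}\times A_{\rm ad}.
\]
Because $Z_{\rm ad}\times A_{\rm ad}$ is a product, this splits into one inequality in $z$ (with $a$ fixed) and one in $a$ (with $z$ fixed). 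The rest of the proof computes the two partial derivatives and shows that they are exactly $\alpha z+B^*\mu$ and $\alpha_{\rm top}a-G^T\xi$, where the multipliers solve the stated adjoint system.

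\textbf{Distributed control.} The derivative of $z\mapsto(\sigma,u,p)=S(f+Bz)$ in direction $\delta z$ is $(\delta\sigma,\delta u,\delta p)=S(B\delta z)$. Hence
\[
\partial_z\widehat J[\delta z]
= w_y\ip{O(u+h)-y_d}{O\delta u}
+ w_\sigma\ip{R\sigma-r_d}{R\delta\sigma}
+ \alpha\ip{z}{\delta z}.
\]
To remove $(\delta\sigma,\delta u)$, I define $(\lambda,\mu,\nu)\in V^{k-1}\times V^k\times\HH^k$ as the solution of the first three adjoint equations. These equations have the mixed bilinear form $B$ with test and trial roles exchanged, up to sign changes on $(\lambda,\nu)$; one substitutes $\lambda\to-\lambda$, $\nu\to-\nu$, uses the symmetry of the form, and checks the signs carefully. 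The right-hand side is the bounded functional $(\tau,v)\mapsto -w_\sigma\ip{R\sigma-r_d}{R\tau}-w_y\ip{O(u+h)-y_d}{Ov}$. Unique solvability then follows from the inf-sup condition in Theorem~\ref{thm:cont-prob-wellposed} applied to the adjoint form.

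Next I test the adjoint equations with $\tau=\delta\sigma$ and $v=\delta u$, and test the linearized forward equations with $\tau=\lambda$, $v=\mu$, $q=\nu$. Subtracting, all the $\ip{\delta\sigma}{\lambda}$, $\ip{\delta u}{d\lambda}$, $\ip{d\delta\sigma}{\mu}$ and $\ip{d\delta u}{d\mu}$ terms cancel. The remaining $p$- and $\nu$-couplings vanish because $\ip{\delta p}{\mu}=0$ by \eqref{eq:adj3} and $\ip{\delta u}{\nu}=0$ by the gauge condition. What is left is $\ip{B\delta z}{\mu}$, so $\partial_z\widehat J[\delta z]=\ip{\alpha z+B^*\mu}{\delta z}_Z$, which gives \eqref{eq:zvi}.

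\textbf{Topological control.} Here $\delta h=\Piop^{-1}G\delta a$. I define $\xi\in\R^{b_k}$ by requiring \eqref{eq:adj4} for all $\eta\in\HH^k$. This is uniquely solvable because $\Piop$ is an isomorphism: set $\xi=-\Piop^{-T}(\ldots)$, where the functional $\eta\mapsto w_y\ip{O(u+h)-y_d}{O\eta}+w_\Pi(\Piop h-\pi_d)\cdot\Piop\eta$ is represented in the coordinates $\Piop\eta$. Choosing $\eta=\delta h$ gives
\[
\partial_a\widehat J[\delta a]=\alpha_{\rm top}a\cdot\delta a-\xi\cdot G\delta a,
\]
which is \eqref{eq:avi}.

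\textbf{Converse and main obstacle.} Conversely, I must show that any multipliers satisfying the system coincide with the constructed ones. This holds because each adjoint equation has a unique solution, so the variational inequalities are exactly the first-order condition above, and sufficiency follows from convexity. I expect the main obstacle to be the adjoint well-posedness together with the sign and consistency bookkeeping, namely verifying that the transposed mixed operator with these signs still satisfies the inf-sup condition. The route I would try first is to use the symmetry of $B$ after flipping the signs of $\lambda$ and $\nu$.
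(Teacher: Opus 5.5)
Your argument is correct, and it is more complete than what the paper provides, though it proceeds differently. The paper gives no separate proof. It writes the Lagrangian $L$ just before the theorem, reads off the four adjoint equations and the two control conditions as stationarity of $L$ in $(\sigma,u,p,h)$ and in $(z,a)$, and invokes convexity for necessity and sufficiency. That formal derivation presupposes that multipliers exist. You instead work with the reduced functional $\widehat J$. You construct $(\lambda,\mu,\nu)$ from a well-posed adjoint mixed problem and $\xi$ from the invertibility of $\Piop$. You then verify $\partial_z\widehat J=\alpha z+B^*\mu$ and $\partial_a\widehat J=\alpha_{\rm top}a-G^T\xi$ by cross-testing the adjoint equations against the linearized state, and vice versa. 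The two facts you rely on, inf-sup stability of the mixed form and bijectivity of $\Piop$, are exactly the constraint qualification that would make the Lagrangian computation rigorous. Your route therefore turns the formal statement into a proof and gives uniqueness of the multipliers as a by-product. The paper's route is shorter and makes the origin of each sign, such as $+B^*\mu$ versus $-G^T\xi$, immediately visible.

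The sign change you propose for the step you flagged as the main obstacle does not quite work. $B$ is not symmetric: its first row carries $-\langle u,d\tau\rangle$ while its second carries $+\langle d\sigma,v\rangle$. The correct observation is simpler. Taken together, the three adjoint equations are equivalent to
\[
 B(-\lambda,\mu,\nu;\tau,v,q)
 =-w_\sigma\langle R\sigma-r_d,R\tau\rangle
 +w_y\langle O(u+h)-y_d,Ov\rangle
 \qquad\forall(\tau,v,q),
\]
so $(-\lambda,\mu,\nu)$ solves the forward mixed problem with a bounded right-hand side. Theorem~\ref{thm:cont-prob-wellposed} therefore applies directly, and no transposed form is needed. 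Also, the converse does not require uniqueness of the multipliers. Your cross-testing computation shows that the gradient identities hold for any $(\lambda,\mu,\nu,\xi)$ satisfying the adjoint system. Hence the variational inequalities are exactly the first-order condition, and convexity finishes the argument.
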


\begin{remark}[Meaning of the harmonic adjoint equations]
Equation \eqref{eq:adj3} states $\mu\perp\HH^k$; it does not imply $\mu=0$. Equation \eqref{eq:adj4} is a finite-dimensional adjoint balance on the physical harmonic state $h$. These two equations have distinct roles and should not be conflated.
\end{remark}

If the controls are unconstrained, \eqref{eq:control-vi} reduces to
\begin{equation}
 z=-\alpha^{-1}B^*\mu,
 \qquad
 a=\alpha_{\rm top}^{-1}G^T\xi.
\end{equation}

\paragraph{Finite-dimensional form of the topological adjoint}
Choose a period-normalized basis $\{h_i\}_{i=1}^{b_k}$ with $\Piop h_i=e_i$, and write $h=Hc$, where $c=\Piop h$ and $H:\R^{b_k}\to\HH^k$ is the harmonic reconstruction map. Then $c=Ga+c_0$. Equation \eqref{eq:adj4} becomes
\begin{equation}\label{eq:top-adjoint-matrix}
 w_y H^*O^*(O(u+Hc)-y_d)+w_\Pi(c-\pi_d)+\xi=0
 \quad\text{in }\R^{b_k}.
\end{equation}
This explicitly exhibits the topology-dependent adjoint subsystem.

\begin{proposition}[Topological balance law]\label{prop:balance}
Assume that the topological control is unconstrained, or that its bounds are
inactive. With the period-normalized reconstruction map $H$ introduced above,
set
\[
 Q_H:=H^{*}O^{*}OH,
 \qquad
 r(z):=H^{*}O^{*}\bigl(y_d-Ou(z)\bigr).
\]
Then the optimal topological control satisfies
\[
 \Bigl[\alpha_{\rm top}I+G^{T}\bigl(w_yQ_H+w_{\Pi}I\bigr)G\Bigr]a
 =
 G^{T}\Bigl[
   w_y r(z)+w_{\Pi}\pi_d
   -\bigl(w_yQ_H+w_{\Pi}I\bigr)c_0
 \Bigr].
\]
Thus the topological and distributed controls are coupled only through
$r(z)$, equivalently through the cross term
$H^{*}O^{*}Ou(z)$. If $O=I$ and $u\perp\HH^{k}$, then
$r(z)=H^{*}y_d$ and the balance law reduces to
\[
 \Bigl[\alpha_{\rm top}I+G^{T}\bigl(w_yM+w_{\Pi}I\bigr)G\Bigr]a
 =
 G^{T}\Bigl[
   w_y d+w_{\Pi}\pi_d
   -\bigl(w_yM+w_{\Pi}I\bigr)c_0
 \Bigr],
 \qquad
 M:=H^{*}H,\quad d:=H^{*}y_d.
\]
\end{proposition}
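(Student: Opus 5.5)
The plan is to eliminate the adjoint variable $\xi$ between the finite-dimensional adjoint equation \eqref{eq:top-adjoint-matrix} and the unconstrained form of the variational inequality \eqref{eq:avi}. Since the bounds on $a$ are inactive (or absent), \eqref{eq:avi} reduces to $\alpha_{\rm top}a=G^T\xi$. Solving \eqref{eq:top-adjoint-matrix} for $\xi$ gives
\[
 \xi=-w_yH^*O^*\bigl(Ou(z)+OHc-y_d\bigr)-w_\Pi(c-\pi_d)
 = w_y r(z)-w_yQ_Hc-w_\Pi c+w_\Pi\pi_d ,
\]
where I have used the definitions of $Q_H$ and $r(z)$ and the linearity of $H^*O^*$.

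Next I substitute the topological state equation $c=Ga+c_0$, which holds in the period-normalized coordinates because $\Piop h=c$ and $h=Hc$. This gives
\[
 \xi=w_yr(z)+w_\Pi\pi_d-(w_yQ_H+w_\Pi I)c_0-(w_yQ_H+w_\Pi I)Ga .
\]
Inserting this into $\alpha_{\rm top}a=G^T\xi$ and moving the $a$-terms to the left produces exactly the displayed balance law. The coupling claim then follows by inspection: $z$ enters only through $r(z)$, and the only $z$-dependent part of $r(z)$ is $-H^*O^*Ou(z)$.

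For the special case $O=I$, I must show that $H^*u(z)=0$. Here $H^*:\HH^k\to\R^{b_k}$ is understood as the $W^k$-adjoint of $H$ composed with the harmonic projection. Concretely, $(H^*w)_i=\ip{w}{h_i}$ for $w\in W^k$, and this vanishes when $w=u\perp\HH^k$ by the third forward constraint. Hence $r(z)=H^*y_d=d$, and $Q_H=H^*H=M$. These identities give the reduced formula.

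The only delicate point is interpreting $H^*$, and hence the term $H^*O^*$. I take $H$ as a map $\R^{b_k}\to W^k$ through the inclusion $\HH^k\subset W^k$, so that $H^*w=(\ip{w}{h_i})_i$. I also need \eqref{eq:top-adjoint-matrix} to be the correct coordinate version of \eqref{eq:adj4}. To check this, I take $\eta=h_i$ in \eqref{eq:adj4} and use $\Piop h_i=e_i$. This yields the $i$th component $w_y\ip{O(u+Hc)-y_d}{Oh_i}+w_\Pi(c-\pi_d)_i+\xi_i=0$, which is \eqref{eq:top-adjoint-matrix}. With that confirmed, the rest is linear algebra.
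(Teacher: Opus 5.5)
Your proposal is correct and follows the paper's proof essentially verbatim: use \eqref{eq:avi} with inactive bounds to get $\alpha_{\rm top}a=G^{T}\xi$, solve \eqref{eq:top-adjoint-matrix} for $\xi$ with $c=Ga+c_0$, substitute and collect the $a$-terms, then use $O=I$ and $H^{*}u=0$ to obtain the reduced law. You also check that \eqref{eq:top-adjoint-matrix} is \eqref{eq:adj4} tested with $\eta=h_i$, using $(H^{*}w)_i=\ip{w}{h_i}$; the paper leaves this step implicit, and your check is consistent with it.
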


\begin{proof}
Because the topological control is unconstrained, or its bounds are inactive,
\eqref{eq:avi} gives
\[
 \alpha_{\rm top}a=G^{T}\xi.
\]
From \eqref{eq:top-adjoint-matrix}, with $c=Ga+c_0$,
\[
 \xi
 =
 -w_yH^{*}O^{*}\bigl(Ou+OHc-y_d\bigr)
 -w_{\Pi}(c-\pi_d).
\]
Substitution into $\alpha_{\rm top}a=G^{T}\xi$ and collection of the terms
containing $a$ gives the first balance law. The second follows from $O=I$ and
$H^{*}u=0$.
\end{proof}
\paragraph{Reduced Hessian and mesh-independent structure}
For unconstrained controls, define the local state-observation operator
\[
 K_z z=\big(\sqrt{w_\sigma}R\sigma(z),\sqrt{w_y}Ou(z)\big)
\]
and the topological observation operator
\[
 K_a a=\big(\sqrt{w_y}OH\,Ga,\sqrt{w_\Pi}Ga\big).
\]
The full reduced Hessian has the block form
\begin{equation}\label{eq:reduced-hessian}
 \nabla^2\widehat J=
 \begin{pmatrix}
 \alpha I+K_z^*K_z & K_z^*K_a\\
 K_a^*K_z & \alpha_{\rm top}I+K_a^*K_a
 \end{pmatrix}.
\end{equation}
Hence
\begin{equation}\label{eq:hessian-lower}
 \ip{\nabla^2\widehat J(\delta z,\delta a)}{(\delta z,\delta a)}
 \ge \alpha\norm{\delta z}^2+\alpha_{\rm top}|\delta a|^2.
\end{equation}
The topology-dependent block has dimension at most $b_k$ and can therefore be treated exactly or through a low-rank Schur complement.

\subsection{Spherical shell: cavity-flux control}\label{sec:shell}

\medskip
\noindent\textbf{The degree-two mixed state.}
At degree two, the de Rham complex gives
\[
  H(\curl;\Omega)\xrightarrow{\ \curl\ }
  H(\divergence;\Omega)\xrightarrow{\ \divergence\ }L^2(\Omega),
\]
and we seek the gauge-fixed local state
$(\sigma,u,p)\in H(\curl;\Omega)\times H(\divergence;\Omega)\times\HH^2$
satisfying
\begin{subequations}\label{eq:shell-mixed}
\begin{align}
  \ip{\sigma}{\tau}-\ip{u}{\curl\tau} &= 0
    &&\forall\tau\in H(\curl;\Omega),\\
  \ip{\curl\sigma}{v}+\ip{\divergence u}{\divergence v}+\ip{p}{v}
    &= \ip{f+Bz}{v}
    &&\forall v\in H(\divergence;\Omega),\\
  \ip{u}{q} &= 0 &&\forall q\in\HH^2 .
\end{align}
\end{subequations}
Here $z$ is the distributed control, $f$ a fixed forcing, and $p$ the
harmonic compatibility component of the total forcing. As in the
abstract formulation, $p$ is \emph{not} the physical harmonic state.
The physical degree-two field is instead
\begin{equation}\label{eq:shell-split}
  y=u+h,\qquad h\in\HH^2 ,
\end{equation}
the mixed equations determining the orthogonal representative $u$ while
the additional variable $h$ carries the global cavity flux.
 
\subsubsection{The cavity-flux period map}
\label{sec:shell-period}
 
Let $\Sigma\subset\Omega$ be any smooth closed oriented surface that
surrounds the cavity once, for instance
$\Sigma=\{x:|x-x_0|=r_\Sigma\}$ with
$r_{\rm in}<r_\Sigma<r_{\rm out}$. For $h\in\HH^2$ define
\begin{equation}\label{eq:shell-period-map}
  \Piop h:=\int_\Sigma h\cdot n\,dS .
\end{equation}
Because $h$ is closed at degree two, that is $\divergence h=0$, the
value of \eqref{eq:shell-period-map} does not depend on the
representative $\Sigma$: if $\Sigma_1$ and $\Sigma_2$ both surround the
cavity once, the region $D$ between them lies in $\Omega$ and the
divergence theorem gives
\[
  \int_{\Sigma_1}h\cdot n\,dS-\int_{\Sigma_2}h\cdot n\,dS
  =\int_D\divergence h\,dx=0 .
\]
Thus $\Piop h$ is a genuine topological period.
 
A convenient continuous representative of the nontrivial class is the
radial field
\begin{equation}\label{eq:shell-radial-harmonic}
  \widetilde h(x)=\frac{x-x_0}{|x-x_0|^3},
\end{equation}
which is divergence-free and curl-free on $\Omega$ and has flux $4\pi$
through any surface enclosing the cavity. Normalizing the harmonic
basis $h_2\in\HH^2$ by
\begin{equation}\label{eq:shell-unit-flux}
  \Piop h_2=\int_\Sigma h_2\cdot n\,dS=1
\end{equation}
therefore determines it in closed form,
\begin{equation}\label{eq:shell-h2-closed}
  h_2=\frac{x-x_0}{4\pi|x-x_0|^3},
\end{equation}
exactly and not to any approximation, since $\dim\HH^2=1$ and
\eqref{eq:shell-radial-harmonic} already lies in $\HH^2$. Two
consequences will be used below. Every physical harmonic state has the
unique representation
\begin{equation}\label{eq:shell-harmonic-coordinate}
  h=c\,h_2,\qquad c=\Piop h ,
\end{equation}
with the scalar $c$ precisely the signed cavity flux. And the quantity
\begin{equation}\label{eq:shell-m-closed}
  m:=\norm{h_2}^2
   =\frac{1}{16\pi^2}\int_\Omega\frac{dV}{|x-x_0|^4}
   =\frac{1}{4\pi}\Bigl(\frac{1}{r_{\rm in}}-\frac{1}{r_{\rm out}}\Bigr),
\end{equation}
which enters the optimality law, is known analytically; for
$r_{\rm in}=0.18$ and $r_{\rm out}=0.46$ it equals
$2.69102561\times10^{-1}$. Note that $m\neq1$: the normalization
\eqref{eq:shell-unit-flux} fixes the period, not the $L^2$ norm, and
the two cannot be imposed together.
 
The definition \eqref{eq:shell-period-map} requires a surface. The
following removes that requirement, which matters at the discrete level
where exhibiting a mesh-resolved $\Sigma_h$ is awkward.
 
\begin{lemma}\label{lem:flux-period}
Let $\Omega\subset\R^3$ be a bounded Lipschitz domain whose boundary
has components $\Gamma_0,\dots,\Gamma_b$, and let $\psi\in H^1(\Omega)$
take the constant value $\psi_i$ on $\Gamma_i$. Then for every
$v\in H(\divergence;\Omega)$ with $\divergence v=0$,
\begin{equation}\label{eq:flux-period}
  \int_\Omega v\cdot\operatorname{grad}\psi
  =\sum_i\psi_i\int_{\Gamma_i}v\cdot n\,dS .
\end{equation}
\end{lemma}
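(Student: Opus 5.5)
The plan is to use the generalized Green formula for $H(\divergence;\Omega)$ on a Lipschitz domain: for $v\in H(\divergence;\Omega)$ and $\phi\in H^1(\Omega)$,
\[
  \int_\Omega v\cdot\operatorname{grad}\phi+\int_\Omega \phi\,\divergence v
  =\langle v\cdot n,\ \phi|_{\partial\Omega}\rangle_{H^{-1/2}(\partial\Omega)\times H^{1/2}(\partial\Omega)} .
\]
Here the normal trace $v\cdot n\in H^{-1/2}(\partial\Omega)$ is the unique continuous extension of the classical normal trace. Apply this with $\phi=\psi$. Since $\divergence v=0$, the volume term drops and the left side of \eqref{eq:flux-period} equals the duality pairing $\langle v\cdot n,\psi|_{\partial\Omega}\rangle$.

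Next, split the pairing over the boundary components. The components $\Gamma_0,\dots,\Gamma_b$ are compact, pairwise disjoint and relatively open in $\partial\Omega$. Hence their indicator functions $\chi_i$ lie in $H^{1/2}(\partial\Omega)$: a piecewise constant function whose jumps sit between disjoint closed pieces is locally constant, so it is Lipschitz on $\partial\Omega$. By hypothesis $\psi|_{\partial\Omega}=\sum_i\psi_i\chi_i$. Linearity of the pairing then gives $\sum_i\psi_i\langle v\cdot n,\chi_i\rangle$.

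Finally, interpret $\langle v\cdot n,\chi_i\rangle$ as $\int_{\Gamma_i}v\cdot n\,dS$. This is the definition of the flux through $\Gamma_i$ for a general $H(\divergence)$ field. It agrees with the surface integral whenever $v$ is smooth up to the boundary, by density of $C^\infty(\overline\Omega)^3$ in $H(\divergence;\Omega)$ and continuity of both sides. If one prefers a concrete representative, choose $\phi_i\in C^\infty(\overline\Omega)$ equal to $1$ near $\Gamma_i$ and $0$ near the other components. Then $\langle v\cdot n,\chi_i\rangle=\int_\Omega v\cdot\operatorname{grad}\phi_i$, which is the $H(\divergence)$-continuous extension of the flux.

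The main obstacle is only in this last step: making sense of the boundary integral for nonsmooth $v$, and checking that $\chi_i\in H^{1/2}(\partial\Omega)$ on a merely Lipschitz boundary. Both are handled by the disjointness and compactness of the components, since every $\chi_i$ is the trace of a smooth cutoff $\phi_i$. The identity for smooth $v$ is the classical divergence theorem applied to $\psi v$, using $\divergence(\psi v)=v\cdot\operatorname{grad}\psi$. The general case then follows by density.
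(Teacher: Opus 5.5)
Your proposal is correct and follows the paper's route: integrate by parts against $\psi$, drop the volume term because $\divergence v=0$, and read the boundary term as $\sum_i\psi_i$ times the flux through $\Gamma_i$. The paper does this in one line with a formal surface integral, whereas you supply the justification it omits, namely the $H^{-1/2}$--$H^{1/2}$ normal-trace pairing and the splitting over components via the locally constant indicators $\chi_i\in H^{1/2}(\partial\Omega)$; your closing density remark is not needed, since the Green formula already holds on all of $H(\divergence;\Omega)$.
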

 
\begin{proof}
Integrate by parts,
$\int_\Omega v\cdot\operatorname{grad}\psi
=-\int_\Omega\psi\,\divergence v+\int_{\partial\Omega}\psi\,v\cdot n$,
and the first term vanishes.
\end{proof}
 
Taking $\psi$ equal to one on the cavity wall and zero on the outer
sphere reduces \eqref{eq:flux-period} to a single term, so
\begin{equation}\label{eq:shell-period-potential}
  \Piop(v)=-\ip{v}{\operatorname{grad}\psi}
\end{equation}
agrees with \eqref{eq:shell-period-map} for divergence-free $v$.
Neither harmonicity of $\psi$ nor the shape of $\Omega$ is used, only
that $\psi$ is constant on each boundary component.
 
\subsubsection{Topological actuation}
\label{sec:shell-actuation}
 
Let $a\in\R$ denote a scalar actuator associated with the global
degree-two mode. We impose
\begin{equation}\label{eq:shell-topological-control}
  \Piop h=g\,a+c_0,\qquad g\neq0,
\end{equation}
where $c_0$ is a prescribed background flux and $g$ the actuator gain;
equivalently
\begin{equation}\label{eq:shell-h-of-a}
  h(a)=(g\,a+c_0)\,h_2 .
\end{equation}
Relation \eqref{eq:shell-h-of-a} is distinct from the distributed
forcing $Bz$. The
distributed control changes the local mixed state, whereas $a$ changes
the cohomological component of the physical field.
 
For a generic Hodge--Laplace model,
\eqref{eq:shell-topological-control} may be read directly as an imposed
flux actuator. In an electromagnetic interpretation, a nonzero
cavity-flux coordinate should be understood as a global flux
established by boundary data, an excluded source, linked circuitry, or
an imposed background field; it is not a magnetic-monopole source
inside the modeled material domain.
 
\subsubsection{Optimal-control problem}
\label{sec:shell-ocp}
 
We prescribe a desired physical state $y_d$, a desired auxiliary
quantity $r_d$, and a desired cavity flux $\Phi_d\in\R$. The shell
problem is
\begin{equation}\label{eq:shell-ocp}
\begin{aligned}
  \min_{\sigma,u,p,h,z,a}\quad
  J_{\rm sh}(\sigma,u,h,z,a)
  &:=\frac{w_y}{2}\norm{O(u+h)-y_d}_Y^2
    +\frac{w_\sigma}{2}\norm{R\sigma-r_d}_{Y_\sigma}^2\\
  &\quad+\frac{w_\Pi}{2}\bigl|\Piop h-\Phi_d\bigr|^2
    +\frac{\alpha}{2}\norm{z}_Z^2
    +\frac{\alpha_{\rm top}}{2}|a|^2,
\end{aligned}
\end{equation}
subject to \eqref{eq:shell-mixed}, \eqref{eq:shell-split} and
\eqref{eq:shell-topological-control}, with $\alpha>0$ and
$\alpha_{\rm top}>0$. Problem \eqref{eq:shell-ocp} is the degree-two
instance of the abstract objective~\eqref{eq:intro-objective}. No target is imposed on the compatibility
variable $p$. The topological term has the direct interpretation
\[
  \frac{w_\Pi}{2}\Bigl|\int_\Sigma h\cdot n\,dS-\Phi_d\Bigr|^2 ,
\]
and since the gauge-fixed state satisfies $u\perp\HH^2$, the variable
$h$ is the part of the physical field carrying the nontrivial global
flux class.
 
Propositions~\ref{prop:p-inert} and \ref{prop:balance} were proved for
an abstract Hilbert complex and therefore apply verbatim here.  Under the
harmonic-invariant distributed-control hypothesis of Proposition~\ref{prop:p-inert},
$p^\star=P_{\HH^2}f$; hence with $f=0$ the compatibility multiplier vanishes
at the optimum whatever the flux actuator does.  This is why assigning a
tracking target to $p$ cannot model cavity flux.
 
\subsubsection{A decoupled manufactured target}
\label{sec:shell-manufactured}
 
For verification it is convenient to choose the desired state in
Hodge-decomposed form,
\begin{equation}\label{eq:shell-manufactured-target}
  y_d=u_d+\gamma\,h_2,\qquad u_d\perp\HH^2,
\end{equation}
and to take $O=I$. Writing $h=c\,h_2$ as in
\eqref{eq:shell-harmonic-coordinate},
\[
  \norm{u+c\,h_2-y_d}^2=\norm{u-u_d}^2+|c-\gamma|^2\,m ,
\]
because $u-u_d\perp h_2$, with $m=\norm{h_2}^2$ given by
\eqref{eq:shell-m-closed}. The topological part of the reduced
objective is therefore
\begin{equation}\label{eq:shell-reduced-top}
  \widehat J_{\rm top}(a)
  =\frac{w_y m}{2}\,|g a+c_0-\gamma|^2
   +\frac{w_\Pi}{2}\,|g a+c_0-\Phi_d|^2
   +\frac{\alpha_{\rm top}}{2}\,|a|^2 ,
\end{equation}
and stationarity of \eqref{eq:shell-reduced-top} in $a$ gives the exact
scalar topological optimum
\begin{equation}\label{eq:shell-exact-a}
  a^\star=\frac{g\bigl[\,w_y m\,\gamma+w_\Pi\Phi_d
      -(w_y m+w_\Pi)c_0\,\bigr]}
     {\alpha_{\rm top}+(w_y m+w_\Pi)g^2},
\end{equation}
and the achieved optimal cavity flux is
\begin{equation}\label{eq:shell-exact-flux}
  c^\star=g\,a^\star+c_0 .
\end{equation}
The factor $m$ is essential and is easily lost: the normalization
\eqref{eq:shell-unit-flux} fixes $\Piop h_2=1$ and leaves
$\norm{h_2}^2=m\neq1$, so $w_y$ and $w_\Pi$ do not enter symmetrically.
Setting $\gamma=\Phi_d$ recovers the fully aligned case in which the
tracking term and the period term ask for the same flux, and
\eqref{eq:shell-exact-a} then reduces to
$g(w_ym+w_\Pi)(\Phi_d-c_0)/[\alpha_{\rm top}+(w_ym+w_\Pi)g^2]$.
Keeping $\gamma$ and $\Phi_d$ separate allows the two drives to be
isolated experimentally, which is done in
Section~\ref{sec:shell-numerics}.
 
Equations \eqref{eq:shell-exact-a} and \eqref{eq:shell-exact-flux} are the specialization to $b_k=1$ of Proposition~\ref{prop:balance}: with
$d:=\ip{y_d}{h_2}=\gamma m$ from
\eqref{eq:shell-manufactured-target}, the general law returns
\eqref{eq:shell-exact-flux}. Because $m$ is known in closed form by
\eqref{eq:shell-m-closed}, this provides a verification against an
analytic value and not merely an internal consistency check.  When $O$ couples harmonic and nonharmonic components, the topological and
distributed controls remain coupled through the reduced Hessian, and the
scalar law acquires the cross term $H^{*}O^{*}Ou(z)$ appearing in
Proposition~\ref{prop:balance}.

8

\section{Time-dependent Maxwell Control}
\label{sec:maxwell-topological}

The stationary construction extends naturally to evolution problems, but Maxwell's equations add an important feature: the harmonic components are themselves dynamical global degrees of freedom.  The curl operators determine the non-harmonic parts of the electric and magnetic fields, whereas the periods of the harmonic parts evolve through finite-dimensional balance laws.  This section formulates the resulting optimal-control problem without assigning a target to any harmonic compatibility multiplier.

\subsection{Control Problem Formulation}
\label{sec:maxwell-ocp}

Let $O_E:W^1\to Y_E$ and $O_B:W^2\to Y_B$ be bounded observation operators.  Let $E_d$, $B_d$, $c_{E,d}$, and $c_{B,d}$ be time-dependent targets.  We minimize
\begin{equation}
\begin{aligned}
  J_T={}&\frac12\int_0^T
  \Big[
    w_E\norm{O_E(e+H_Ec_E)-E_d}_{Y_E}^2
   +w_B\norm{O_B(b+H_Bc_B)-B_d}_{Y_B}^2\\
  &\hspace{17mm}
   +w_{\Pi,E}|c_E-c_{E,d}|^2
   +w_{\Pi,B}|c_B-c_{B,d}|^2
   +\alpha\norm{z}_Z^2
   +\alpha_E|a_E|^2
   +\alpha_B|a_B|^2
  \Big]dt\\
  &+\frac{\beta_E}{2}\norm{O_{E,T}(e(T)+H_Ec_E(T))-E_T}^2
   +\frac{\beta_B}{2}\norm{O_{B,T}(b(T)+H_Bc_B(T))-B_T}^2\\
  &+\frac{\beta_{\Pi,E}}{2}|c_E(T)-c_{E,T}|^2
   +\frac{\beta_{\Pi,B}}{2}|c_B(T)-c_{B,T}|^2,
  \label{eq:maxwell-objective}
\end{aligned}
\end{equation}
subject to \eqref{eq:maxwell-local-state}, \eqref{eq:maxwell-top-E-ode}, and either \eqref{eq:maxwell-magnetic-flux-conservation} or \eqref{eq:maxwell-top-B-ode}, together with the initial conditions
\begin{equation}
  c_E(0)=c_{E,0},\qquad c_B(0)=c_{B,0}.
\end{equation}
The admissible controls satisfy
\[
  z\in Z_{\rm ad}\subset L^2(0,T;Z),\qquad
  a_E\in A_{E,{\rm ad}}\subset L^2(0,T;\R^{m_E}),\qquad
  a_B\in A_{B,{\rm ad}}\subset L^2(0,T;\R^{m_B}),
\]
where the admissible sets are nonempty, closed, and convex.  We assume throughout
\begin{equation}
  \alpha>0,\qquad \alpha_E>0,\qquad \alpha_B>0
  \label{eq:maxwell-positive-reg}
\end{equation}
for every control that is present in the model.

\begin{proposition}[Existence and uniqueness for the Maxwell control problem]
\label{prop:maxwell-existence}
Assume the Maxwell evolution generates a bounded linear state map from the controls to $(e,b,c_E,c_B)$, the observation operators are bounded, and \eqref{eq:maxwell-positive-reg} holds.  Then the reduced functional associated with \eqref{eq:maxwell-objective} is strongly convex and coercive on the admissible control set.  Consequently the problem has a unique optimal control and a unique associated physical state.
\end{proposition}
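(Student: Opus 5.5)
The plan mirrors the proof of Theorem~\ref{thm:existence}, carried out in the Hilbert space $\mathcal U:=L^2(0,T;Z)\times L^2(0,T;\R^{m_E})\times L^2(0,T;\R^{m_B})$, dropping any factor whose control is absent from the model. By hypothesis the Maxwell evolution \eqref{eq:maxwell-local-state}, together with \eqref{eq:maxwell-top-E-ode} and either \eqref{eq:maxwell-magnetic-flux-conservation} or \eqref{eq:maxwell-top-B-ode}, defines a bounded map from controls to states. Since the data $j$, $g_E$, $g_B$, $e_0$, $b_0$, $c_{E,0}$, $c_{B,0}$ are fixed, this map is affine:
\[
(e,b,c_E,c_B)=\mathcal S(z,a_E,a_B)+s_0 .
\]
Here $\mathcal S$ is bounded and linear into a state space on which both the running observations in $L^2(0,T)$ and the terminal evaluations at $t=T$ are continuous. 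For the terminal terms I will take the state space to be $C([0,T];W^1\times W^2)\times C([0,T];\R^{b_1}\times\R^{b_2})$. I read this as part of the assumption ``bounded linear state map'', since it is the natural energy class for the Maxwell system. For the finite-dimensional ODEs it follows directly, because $c_E$ and $c_B$ are integrals of $L^2$ functions.

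\textbf{Reduced functional.} Substituting the state map into \eqref{eq:maxwell-objective} gives
\[
\widehat J_T(\mathbf u)=\tfrac12\norm{\mathcal K\mathbf u-\mathbf d}^2_{\mathcal Y}+\tfrac\alpha2\norm{z}^2+\tfrac{\alpha_E}2\norm{a_E}^2+\tfrac{\alpha_B}2\norm{a_B}^2 .
\]
In this expression $\mathcal K$ collects, with square-root weights, the bounded linear maps $\mathbf u\mapsto O_E(e+H_Ec_E)$, $O_B(b+H_Bc_B)$, $c_E$, $c_B$ (in $L^2(0,T)$) and their terminal counterparts. The vector $\mathbf d$ absorbs the targets and the affine offset $s_0$. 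These maps are bounded because $H_E$ and $H_B$ are bounded finite-rank operators and the $O$'s are bounded. The tracking part is therefore a continuous convex quadratic.

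\textbf{Convexity, coercivity and existence.} The regularization part has Hessian at least $\min(\alpha,\alpha_E,\alpha_B)\,I$, so by \eqref{eq:maxwell-positive-reg}
\[
\widehat J_T(\mathbf u)-\widehat J_T(\mathbf v)-D\widehat J_T(\mathbf v)(\mathbf u-\mathbf v)\ge\tfrac{\min(\alpha,\alpha_E,\alpha_B)}2\norm{\mathbf u-\mathbf v}^2_{\mathcal U}.
\]
This is strong convexity, and it immediately yields the coercivity bound $\widehat J_T(\mathbf u)\ge\tfrac{\min(\alpha,\alpha_E,\alpha_B)}{2}\norm{\mathbf u}^2$. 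A continuous convex functional is weakly lower semicontinuous. The admissible set is nonempty, closed and convex, hence weakly closed. The direct method then gives a minimizer: take a bounded minimizing sequence, extract a weakly convergent subsequence, and pass to the limit using weak lower semicontinuity. Strong convexity forces the minimizer to be unique. The associated state is the image of the optimal control under the affine state map, so it is unique as well, and so is the physical field $(e+H_Ec_E,\,b+H_Bc_B)$.

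\textbf{Main obstacle.} The only nontrivial point is the continuity of the terminal observation $\mathbf u\mapsto(e(T),b(T))$, because the $L^2(0,T)$ running terms alone do not control point values in time. I would handle this through the standing hypothesis, as discussed above. Alternatively, one can verify it from the energy identity for \eqref{eq:maxwell-local-state}: testing with $(e,b)$ in the weighted inner products makes the curl terms cancel, which bounds $\sup_t\norm{e(t)}^2_\varepsilon+\norm{b(t)}^2_{\mu^{-1}}$ by the initial data plus $\norm{P_E^\perp j_z}_{L^1(0,T;W^1)}$. If some control, say $a_B$, is absent, the corresponding factor and regularization are simply removed, and the argument is unchanged.
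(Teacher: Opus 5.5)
Your proposal is correct and follows the same route as the paper's proof. Both arguments use a bounded affine control-to-state map, tracking terms that are convex quadratics in the controls, and positive regularization giving strong convexity and coercivity, then conclude existence by the direct method and uniqueness of control and state by strict convexity; you are simply more explicit, notably about continuity of the terminal observations, which the paper likewise tacitly folds into the boundedness hypothesis. One caveat concerns only your optional energy-identity aside: the paper takes $\mathcal B:Z\to(V^1)'$, so a bound in terms of $\norm{P_E^\perp j_z}_{L^1(0,T;W^1)}$ presupposes that $\mathcal B$ maps into $W^1$, and your main argument is right to rest on the hypothesis instead.
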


\begin{proof}
The state equations are linear and the state map is bounded.  All tracking terms are convex quadratic functionals of the controls through this map.  Positive control regularization gives coercivity and strict convexity.  Weak lower semicontinuity and the direct method yield existence, and strict convexity yields uniqueness.
\end{proof}

\begin{proposition}[Topological reachability over a finite time interval]
\label{prop:maxwell-reachability}
For the magnetic topological subsystem with fixed initial flux $c_{B,0}$,
\[
  M_B^H\dot c_B=G_Ba_B+g_B,
\]
the terminal period $c_{B,T}$ is reachable if and only if
\begin{equation}
  M_B^H(c_{B,T}-c_{B,0})-\int_0^T g_B(t)\,dt
  \in\range(G_B).
\end{equation}
The analogous statement holds for $c_E$ after including the prescribed and distributed harmonic-current contributions in \eqref{eq:maxwell-top-E-ode}.  Thus the Betti numbers determine the dimensions of the topological state spaces, while the actuator ranks determine the number of independently reachable global modes.
\end{proposition}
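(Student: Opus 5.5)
The magnetic topological subsystem has no spatial structure at all: it is a linear ODE in $\R^{b_2}$ with a constant, symmetric positive definite mass matrix $M_B^H$. The plan is therefore to integrate it exactly in time and reduce reachability to a range condition on a linear map from $L^2(0,T;\R^{m_B})$ to $\R^{b_2}$. Solutions are understood in the absolutely continuous (Carath\'eodory) sense, with $a_B\in L^2(0,T;\R^{m_B})$ and $g_B\in L^1(0,T;\R^{b_2})$. The fundamental theorem of calculus for absolutely continuous functions then gives
\[
  M_B^H\bigl(c_B(T)-c_{B,0}\bigr)=G_B\int_0^T a_B(t)\,dt+\int_0^T g_B(t)\,dt .
\]
Because $M_B^H$ is invertible, every $a_B$ yields a unique trajectory. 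The terminal period $c_{B,T}$ is thus reached by some admissible $a_B$ if and only if $M_B^H(c_{B,T}-c_{B,0})-\int_0^T g_B\,dt$ lies in the set $\{G_B\int_0^T a_B\,dt : a_B\in L^2(0,T;\R^{m_B})\}$.

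The key step is to identify this set with $\range(G_B)$. One inclusion is immediate, since $G_B\int_0^T a_B\,dt=G_B w$ with $w:=\int_0^T a_B\,dt\in\R^{m_B}$. For the converse, given $w\in\R^{m_B}$ I will take the constant control $a_B\equiv w/T$. It lies in $L^2(0,T;\R^{m_B})$ and satisfies $\int_0^T a_B\,dt=w$. This proves the equivalence, and also shows that the reachable terminal periods form the affine subspace
\[
  c_{B,0}+(M_B^H)^{-1}\Bigl(\int_0^T g_B\,dt+\range(G_B)\Bigr),
\]
whose dimension is $\operatorname{rank}(G_B)\le b_2$. That gives the final sentence of the statement.

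For the electric analogue I will integrate \eqref{eq:maxwell-top-E-ode} in the same way. The condition becomes
\[
  M_E^H(c_{E,T}-c_{E,0})+\int_0^T j_H\,dt-\int_0^T g_E\,dt\in\range(G_E)+\Bigl\{\int_0^T C_Ez\,dt : z\in Z_{\rm ad}\Bigr\}.
\]
If $z$ is regarded as fixed data, the distributed term moves to the left-hand side instead.

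The only real obstacle is the admissible sets. The statement is a clean range condition only when $A_{B,{\rm ad}}$ contains the constant controls, for instance when it is the whole space or at least a neighbourhood of the needed constant. For a general closed convex $A_{B,{\rm ad}}$, the reachable set is the image of the convex set $\{\int_0^T a_B\,dt\}$ under $G_B$, and the equivalence can fail. I would therefore state explicitly that the proposition refers to unconstrained actuators, which matches the earlier stationary Proposition~\ref{prop:reachability}.
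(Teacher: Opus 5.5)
The paper states this proposition without proof. Your argument is the evident one and it is correct: you integrate the constant-coefficient ODE exactly, and you realize every $w\in\R^{m_B}$ as $\int_0^T a_B\,dt$ through the constant control $w/T$. Your caveat is also right: the ``only if'' direction holds for any admissible set, but the ``if'' direction fails for, e.g., $A_{B,{\rm ad}}=\{0\}$, so the proposition must be read with unconstrained (or sufficiently rich) actuators.

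There is one small slip in the electric analogue. Integrating \eqref{eq:maxwell-top-E-ode} gives $M_E^H(c_{E,T}-c_{E,0})+\int_0^T j_H\,dt-\int_0^T g_E\,dt=G_E\int_0^T a_E\,dt-\int_0^T C_Ez\,dt$. So the distributed set should enter with a minus sign, not a plus. This is immaterial only when $Z_{\rm ad}$ is symmetric. In the unconstrained case the condition is membership in $\range(G_E)+\range(C_E)$.
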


\subsection{Weak adjoint system}
\label{sec:maxwell-adjoint}

The optimality conditions should again be kept in weak form.  Let $\lambda_E(t)$ and $\lambda_B(t)$ be adjoints for the local Maxwell equations and let $\xi_E(t)\in\R^{b_1}$ and $\xi_B(t)\in\R^{b_2}$ be adjoints for the topological period equations.  The local adjoints evolve backward in time.  For almost every $t\in(0,T)$ they satisfy
\begin{subequations}
\label{eq:maxwell-adjoint}
\begin{align}
  -\ip{\varepsilon\dot\lambda_E}{v}
  +\ip{\mu^{-1}dv}{\lambda_B}
  +w_E\ip{O_E(E)-E_d}{O_Ev}_{Y_E}
  &=0
  &&\forall v\in V^1\cap(\HH^1_{\varepsilon})^{\perp_\varepsilon},
  \\
  -\ip{\mu^{-1}\dot\lambda_B}{w}
  -\ip{\mu^{-1}w}{d\lambda_E}
  +w_B\ip{O_B(B)-B_d}{O_Bw}_{Y_B}
  &=0
  &&\forall w\in V^2\cap(\HH^2_{\mu^{-1}})^{\perp_{\mu^{-1}}},
  \\
  -M_E^H\dot\xi_E
  +w_E H_E^*O_E^*(O_E(E)-E_d)
  +w_{\Pi,E}(c_E-c_{E,d})
  &=0,
  \\
  -M_B^H\dot\xi_B
  +w_B H_B^*O_B^*(O_B(B)-B_d)
  +w_{\Pi,B}(c_B-c_{B,d})
  &=0.
\end{align}
\end{subequations}
Here $E=e+H_Ec_E$ and $B=b+H_Bc_B$.  The terminal conditions are obtained from the terminal objective.  Explicitly,
\begin{subequations}
\begin{align}
  \varepsilon\lambda_E(T)
  &=-\beta_E O_{E,T}^*\big(O_{E,T}E(T)-E_T\big)
  &&\text{on }(\HH^1_{\varepsilon})^{\perp_\varepsilon},\\
  \mu^{-1}\lambda_B(T)
  &=-\beta_B O_{B,T}^*\big(O_{B,T}B(T)-B_T\big)
  &&\text{on }(\HH^2_{\mu^{-1}})^{\perp_{\mu^{-1}}},\\
  M_E^H\xi_E(T)
  &=-\beta_E H_E^*O_{E,T}^*\big(O_{E,T}E(T)-E_T\big)
    -\beta_{\Pi,E}(c_E(T)-c_{E,T}),\\
  M_B^H\xi_B(T)
  &=-\beta_B H_B^*O_{B,T}^*\big(O_{B,T}B(T)-B_T\big)
    -\beta_{\Pi,B}(c_B(T)-c_{B,T}).
\end{align}
\end{subequations}
The notation in the first two lines means equality after testing against the corresponding harmonic-complement spaces; no strong-domain interpretation is required.

The control conditions are the variational inequalities
\begin{subequations}
\label{eq:maxwell-control-vi}
\begin{align}
  \int_0^T
  \ip{\alpha z+\mathcal B_\perp^*\lambda_E+C_E^*\xi_E}
      {\widetilde z-z}_Z\,dt
  &\ge0
  &&\forall\widetilde z\in Z_{\rm ad},
  \\
  \int_0^T
  (\alpha_Ea_E-G_E^T\xi_E)\cdot(\widetilde a_E-a_E)\,dt
  &\ge0
  &&\forall\widetilde a_E\in A_{E,{\rm ad}},
  \\
  \int_0^T
  (\alpha_Ba_B-G_B^T\xi_B)\cdot(\widetilde a_B-a_B)\,dt
  &\ge0
  &&\forall\widetilde a_B\in A_{B,{\rm ad}}.
\end{align}
\end{subequations}
Here $\mathcal B_\perp=P_E^\perp\mathcal B$.  The two terms involving the distributed control in \eqref{eq:maxwell-control-vi} show explicitly that a current actuator can affect both the local electric field and the topological electric-circulation subsystem.  If the current-control operator has no harmonic component, then $C_E=0$ and these channels decouple.

For unconstrained controls, \eqref{eq:maxwell-control-vi} reduces pointwise in time to
\begin{equation}
  z=-\alpha^{-1}(\mathcal B_\perp^*\lambda_E+C_E^*\xi_E),
  \qquad
  a_E=\alpha_E^{-1}G_E^T\xi_E,
  \qquad
  a_B=\alpha_B^{-1}G_B^T\xi_B.
\end{equation}

\begin{remark}[No strong adjoint Maxwell equations are needed]
The weak system \eqref{eq:maxwell-adjoint} is the natural adjoint formulation for FEEC.  Rewriting it formally using $d^*$ would require additional assumptions such as $\lambda_E\in\operatorname{dom}(d^*)$ or $\lambda_B\in\operatorname{dom}(d^*)$, which are neither necessary for the control analysis nor guaranteed by the finite element spaces.  The weak formulation is therefore the primary mathematical statement.
\end{remark}

\subsection{Time-dependent Maxwell control on a spherical shell}
\label{sec:maxwell-spherical-shell}

We next specialize the topology-aware Maxwell formulation to a
spherical shell
\begin{equation}
  \Omega
  =
  \left\{
    x\in\mathbb R^3:
    r_{\rm in}<|x-c|<r_{\rm out}
  \right\},
  \qquad c=(1/2,1/2,1/2).
\end{equation}
The domain is homotopy equivalent to $S^2$, and therefore
\[
  b_1(\Omega)=0,
  \qquad
  b_2(\Omega)=1.
\]
Thus the shell has no nontrivial degree-one circulation coordinate,
but it has one degree-two harmonic coordinate.  In the Maxwell
interpretation this coordinate is a global magnetic flux through a
closed surface surrounding the excluded cavity.

\subsubsection{Local and harmonic Maxwell fields}

Let $\mathfrak H_E^1$ and $\mathfrak H_B^2$ denote the harmonic spaces
associated with the electric and magnetic spaces and the chosen
boundary complex.  On the shell,
\[
  \mathfrak H_E^1=\{0\},
  \qquad
  \dim\mathfrak H_B^2=1.
\]
Accordingly, the electric field has no topological harmonic coordinate,
whereas the magnetic field is decomposed as
\begin{equation}
  B(t)=b(t)+h_B(t),
  \qquad
  b(t)\perp\mathfrak H_B^2,
  \qquad
  h_B(t)\in\mathfrak H_B^2.
\end{equation}
Choose $h_2\in\mathfrak H_B^2$ with unit period,
\begin{equation}
  \Pi_B h_2
  :=
  \int_\Sigma h_2\cdot n\,dS
  =
  1,
  \label{eq:max-shell-unit-flux}
\end{equation}
where $\Sigma\subset\Omega$ is any closed oriented surface surrounding
the cavity once.  Then
\begin{equation}
  h_B(t)=c_B(t)h_2,
  \qquad
  c_B(t)=\Pi_B B(t).
\end{equation}
As in the stationary shell example, a convenient continuous
representative is proportional to
\[
  \frac{x-c}{|x-c|^3},
\]
restricted to $\Omega$ and normalized by
\eqref{eq:max-shell-unit-flux}.

The physical significance of $c_B$ is global rather than local:
for any two homologous surfaces $\Sigma_1,\Sigma_2$ surrounding the
cavity,
\[
  \int_{\Sigma_1}B\cdot n\,dS
  =
  \int_{\Sigma_2}B\cdot n\,dS
\]
whenever $\div B=0$ in the material domain.  The flux cannot be
removed by a local deformation of the integration surface without
crossing the excluded cavity.

\subsubsection{Maxwell evolution and conservation of the topological flux}

For clarity, consider the standard first-order Maxwell system in weak
form with electric field $E$ and magnetic field $B$:
\begin{subequations}
\begin{align}
  \ip{\varepsilon E_t}{v}
  -
  \ip{\mu^{-1}B}{\curl v}
  &=
  -\ip{j+B_z z}{v},
  && \forall v\in H(\curl;\Omega),
  \\
  \ip{\mu^{-1}B_t}{q}
  +
  \ip{\mu^{-1}\curl E}{q}
  &=
  \langle \mathcal F_{\rm top}a_B,q\rangle,
  && \forall q\in H(\divergence;\Omega).
  \label{eq:max-shell-faraday}
\end{align}
\end{subequations}
Here $z(t)$ is a distributed electric/current control.  The term
$\mathcal F_{\rm top}a_B$ is written explicitly because a distributed
current control in Amp\`ere's equation does not, by itself, change the
degree-two cohomology class of $B$.

Projecting \eqref{eq:max-shell-faraday} onto the normalized harmonic
field $h_2$ gives
\begin{equation}
  m_B \dot c_B(t)
  =
  g_B a_B(t),
  \qquad
  m_B:=\ip{\mu^{-1}h_2}{h_2}>0,
  \label{eq:max-shell-top-dynamics}
\end{equation}
after defining the scalar actuator gain by
\[
  g_B a_B(t)
  :=
  \langle\mathcal F_{\rm top}a_B(t),h_2\rangle.
\]
The curl term drops out because $h_2$ is harmonic.

Equation \eqref{eq:max-shell-top-dynamics} exposes the topological
content of the Maxwell problem.  If no topological or boundary
magnetic actuator is present, $a_B\equiv0$, then
\begin{equation}
  \dot c_B(t)=0,
  \qquad
  c_B(t)=c_B(0).
\end{equation}
Thus the cavity flux is a conserved cohomological quantity.  A
time-dependent desired flux cannot be reached by the ordinary
distributed current control alone.  To make the flux itself a
controllable output, one must include a mechanism that acts on this
global degree of freedom, represented abstractly here by $a_B$.
Depending on the application, this can model boundary excitation,
linked external circuitry, imposed global flux, or another actuator
whose action is not contained in the local exact/coexact Maxwell
dynamics.

\subsubsection{Flux-tracking optimal-control problem}

Let $B_d(t)$ and $E_d(t)$ denote desired local fields and let
$\Phi_d(t)$ denote a desired cavity-flux trajectory.  We consider
\begin{equation}
\begin{aligned}
  J(E,B,z,a_B)
  &:=
  \frac{w_E}{2}
  \int_0^T
  \norm{O_EE(t)-E_d(t)}^2\,dt
  \\
  &\quad+
  \frac{w_B}{2}
  \int_0^T
  \norm{O_BB(t)-B_d(t)}^2\,dt
  \\
  &\quad+
  \frac{w_\Phi}{2}
  \int_0^T
  \left|
    \Pi_B B(t)-\Phi_d(t)
  \right|^2\,dt
  \\
  &\quad+
  \frac{\alpha}{2}
  \int_0^T\norm{z(t)}^2\,dt
  +
  \frac{\alpha_{\rm top}}{2}
  \int_0^T|a_B(t)|^2\,dt,
\end{aligned}
\end{equation}
with $\alpha>0$ and $\alpha_{\rm top}>0$, subject to the Maxwell
evolution, Gauss constraints, initial conditions, and
\eqref{eq:max-shell-top-dynamics}.  A terminal flux penalty
\begin{equation}
  \frac{w_{\Phi,T}}{2}
  |c_B(T)-\Phi_T|^2
  \label{eq:max-shell-terminal-flux}
\end{equation}
may be added when the control objective is to establish a prescribed
final flux rather than track a complete trajectory.

Using $B=b+c_Bh_2$, the explicitly topological part of the problem is
the scalar linear-quadratic control problem
\begin{equation}
  \min_{c_B,a_B}
  \frac{w_\Phi}{2}
  \int_0^T|c_B-\Phi_d|^2\,dt
  +
  \frac{\alpha_{\rm top}}{2}
  \int_0^T|a_B|^2\,dt,
  \qquad
  m_B\dot c_B=g_Ba_B,
  \qquad
  c_B(0)=c_{B,0},
\end{equation}
up to additional coupling caused by an observation $O_B$ that acts on
the full magnetic field.  This scalar subsystem is absent on a
topologically trivial domain and appears here solely because
$b_2(\Omega)=1$.

\subsubsection{Weak topological adjoint}

Introduce a scalar adjoint $\xi_B(t)$ for
\eqref{eq:max-shell-top-dynamics}.  With the sign convention
\[
  \int_0^T
  \xi_B(t)
  \bigl(
    m_B\dot c_B(t)-g_Ba_B(t)
  \bigr)\,dt,
\]
variation with respect to $c_B$ and integration by parts in time gives
\begin{equation}
  -m_B\dot\xi_B
  +
  w_\Phi(c_B-\Phi_d)
  +
  \mathcal C_B^\ast
  \bigl(O_BB-B_d\bigr)
  =
  0,
  \label{eq:max-shell-top-adjoint}
\end{equation}
where $\mathcal C_B:c\mapsto O_B(ch_2)$ records any coupling of the
full-field observation to the harmonic mode.  If there is no terminal
penalty, then
\[
  \xi_B(T)=0;
\]
with \eqref{eq:max-shell-terminal-flux},
\[
  m_B\xi_B(T)
  =
  -w_{\Phi,T}\bigl(c_B(T)-\Phi_T\bigr)
\]
for the stated Lagrangian sign convention.

The topological control condition is
\begin{equation}
  \alpha_{\rm top}a_B-g_B\xi_B=0
  \label{eq:max-shell-top-control-condition}
\end{equation}
in the unconstrained case, or the corresponding variational
inequality if $a_B$ is bounded.  Equations
\eqref{eq:max-shell-top-dynamics},
\eqref{eq:max-shell-top-adjoint}, and
\eqref{eq:max-shell-top-control-condition} form the
one-dimensional topology-dependent optimality subsystem.

\subsection{Interpretation for the examples}
\label{sec:maxwell-interpretation}

The topology-dependent extension has two particularly clean electromagnetic interpretations.
\begin{enumerate}
\item On a solid torus with $b_1=1$, $c_E(t)$ is a global electric circulation around the hole.  A distributed current with nonzero harmonic projection or a dedicated loop actuator can steer this circulation.  A target $c_{E,d}(t)$ is therefore physically meaningful.
\item On a spherical shell with $b_2=1$, $c_B(t)$ is magnetic flux through the cavity.  Without magnetic or boundary topological actuation, \eqref{eq:maxwell-magnetic-flux-conservation} makes this flux an invariant.  With a coil or boundary actuator represented by $G_Ba_B$, the same quantity becomes a genuine controlled topological state.
\end{enumerate}
For a domain with several independent tunnels or cavities, the vectors $c_E$ and $c_B$ contain one coordinate per cohomology generator.  Rank-deficient actuator matrices provide a direct numerical experiment in which some global modes are reachable and others are not, even though the local Maxwell solver remains well posed in all cases.

\section{Solution Methods}
\label{sec:solution-methods}

The stationary reduced problem is posed in the two control variables
\[
  \min_{(z,a)\in Z_{\rm ad}\times A_{\rm ad}}\widehat J(z,a),
\]
where the local mixed state is obtained from the distributed control $z$ and the harmonic state from the finite-dimensional actuator $a$.  The quadratic objective and linear state maps make the unconstrained problem a symmetric positive-definite reduced system.  Pointwise bounds or other closed convex constraints enter only through the control variational inequalities \eqref{eq:zvi}--\eqref{eq:avi}.

\subsection{Reduced gradient and projection}
\label{sec:reduced-gradient-methods}

Let $g_z$ and $g_a$ denote the reduced gradients in the natural control inner products.  The weak optimality system gives
\[
  g_z=\alpha z+B^*\mu,
  \qquad
  g_a=\alpha_{\rm top}a-G^T\xi.
\]
Consequently the constrained optimum is characterized by the projected fixed-point relations
\[
  z=P_{Z_{\rm ad}}(z-c_z g_z),
  \qquad
  a=P_{A_{\rm ad}}(a-c_a g_a),
  \qquad c_z,c_a>0.
\]
The first projection is an infinite-dimensional control-space projection before discretization; for pointwise box constraints it becomes pointwise clipping.  The second acts only in the finite-dimensional actuator space and is therefore inexpensive.  In the numerical experiments below the reported stationary optima are interior, so these projections are inactive and the reduced problem is solved by conjugate gradients.  We retain the constrained formulation because it is the natural one for engineering actuator limits.

\subsection{Primal--dual active sets and semismooth Newton}
\label{sec:active-set}

For illustration, suppose
\[
  Z_{\rm ad}=\{z:z_\ell\le z\le z_u\},
  \qquad
  A_{\rm ad}=\{a:a_\ell\le a\le a_u\}.
\]
Introduce normal-cone multipliers $\eta_z\in N_{Z_{\rm ad}}(z)$ and $\eta_a\in N_{A_{\rm ad}}(a)$.  The control conditions are equivalently
\[
  g_z+\eta_z=0,
  \qquad
  g_a+\eta_a=0.
\]
A primal--dual active-set iteration identifies the lower and upper active sets from the signs of a projected trial step, fixes the control on those sets, and solves the linearized state--adjoint system on the inactive set.  Because the topological actuator dimension is at most the number of chosen actuator channels, its active-set update can be performed exactly at negligible cost.

The same equations can be written as a semismooth system by using the projection residual
\[
  F(z,a)=
  \begin{pmatrix}
    z-P_{Z_{\rm ad}}(z-c_zg_z)\\
    a-P_{A_{\rm ad}}(a-c_ag_a)
  \end{pmatrix}=0.
\]
The projection onto box constraints is semismooth in the standard function-space setting.  A generalized Newton derivative therefore gives the familiar primal--dual active-set/semismooth-Newton method: on active degrees of freedom the control is fixed to its bound, while on inactive degrees of freedom one solves the reduced Newton equation.  The topological block contributes only a small dense block or a low-rank coupling to that equation.  Standard local superlinear convergence results apply under the usual strict-complementarity and second-order assumptions; see \cite{ulbrich2002semismooth,de2015numerical}.

\subsection{Reduced Hessian and Hessian-vector products}
\label{sec:hessian-methods}

For unconstrained controls, or on the inactive subspace of a constrained problem, the reduced Hessian is the block operator \eqref{eq:reduced-hessian}.  Its coercivity \eqref{eq:hessian-lower} permits conjugate gradients in the product control inner product.  No full Hessian matrix is required.

Given a direction $(\delta z,\delta a)$, first solve the linearized local mixed state with forcing $B\delta z$.  Since the state equation is linear, this is the same mixed Hodge--Laplace operator as in the forward solve.  The harmonic-state variation is explicit,
\[
  \delta h=HG\,\delta a.
\]
Next solve the weak adjoint system with right-hand sides generated by the observation variations $O(\delta u+\delta h)$ and $R\delta\sigma$.  If $(\delta\mu,\delta\xi)$ denotes the resulting adjoint variation, the Hessian action is
\[
  \nabla^2\widehat J(z,a)
  \begin{pmatrix}\delta z\\ \delta a\end{pmatrix}
  =
  \begin{pmatrix}
    \alpha\delta z+B^*\delta\mu\\
    \alpha_{\rm top}\delta a-G^T\delta\xi
  \end{pmatrix}.
\]
Thus one Hessian-vector product requires one incremental state solve and one incremental adjoint solve, together with operations in a $b_k$-dimensional harmonic reconstruction.  The same factorization or preconditioner used for the state and adjoint equations can be reused at every Krylov iteration.

The topology-dependent block is small.  If desired, $a$ may be eliminated exactly by a Schur complement.  In the unconstrained case, for fixed $z$ the topological balance law of Proposition~\ref{prop:balance} is a linear system of dimension $m$, and substitution of its solution produces a reduced problem in $z$ alone.  Conversely, retaining $(z,a)$ exposes the cross term $K_z^*K_a$ and is preferable when studying how a non-global observation couples local and topological controls.

\subsection{Mesh-independent iteration structure}
\label{sec:mesh-independent-methods}

At fixed positive regularization, the continuous reduced Hessian is a coercive identity perturbation by observation--solution operators.  A stable FEEC discretization approximates these operators without introducing mesh-dependent null directions.  The finite-dimensional topological block has fixed dimension $b_k$ under refinement.  Consequently, with a uniformly stable realization of the control inner product and the mixed solves, the spectrum relevant to the reduced Krylov iteration approaches that of the continuous reduced operator rather than deteriorating simply because the mesh is refined.  The stationary experiments test this claim directly by reporting conjugate-gradient counts under refinement and under sweeps in the regularization parameters.

For the time-dependent Maxwell problem the same organization is used after time discretization: forward integration of the local and topological states, backward integration of the weak adjoints, and a reduced gradient in $(z,a_E,a_B)$.  The harmonic states and adjoints remain $b_1$- and $b_2$-dimensional.  The Maxwell experiments in Sections~\ref{sec:maxwell-lshape-numerics}--\ref{sec:maxwell-shell-numerics} are designed to check gradient consistency, re-entrant-corner behavior, reachability, conservation, and mesh/time-step independence of this coupled solver.

\section{Discretization}\label{s:disc}

\subsection{Background: Finite Element Exterior Calculus}\label{s:feec}

We recap basic notions of differential forms.
For $k \in \mathbb Z$, we let $\Lambda^k$ be the set of alternating $k$-linear maps on $\mathbb R^n$.
The vector space $\Lambda^k$ has dimension $\binom{n}{k}$.
It is non-zero only for $0 \leq k \leq n$.
Here, we identify $\Lambda^0 = \mathbb R$.

Let $\Omega \subseteq \mathbb R^n$ be a domain.
A differential $k$-form over $\Omega$ is a function $\Omega \rightarrow \Lambda^k$.
We $C^{\infty}\Lambda^{k}(\Omega)$ be the space of smooth differential $k$-forms over $\Omega$, and $L^{2}\Lambda^{k}(\Omega)$ for the Hilbert space of differential $k$-forms over $\Omega$ with coefficients in $L^2(\Omega)$.
The exterior product $\omega \wedge \eta$ of a $k$-form $\omega$ and an $l$-form $\eta$ is bilinear in each argument and satisfies the identity $\omega \wedge \eta = (-1)^{kl} \eta \wedge \omega$.
Whenever $\omega \in C^{\infty}\Lambda^k$ and $X$ is a vector field,
we let $X \llcorner \omega \in \Lambda^{k-1}$ denote the contraction of $\omega$ with $X$. This is uniquely defined by the identity
$X \llcorner \omega( w_1, \ldots, w_{k-1} ) = \omega( X, w_1, \ldots, w_{k-1} )$.
We introduce the exterior derivative as
\begin{align}
	d\omega = \sum_{i=1}^{n} d x_{i} \wedge \partial_{i} \omega,
	\quad
	\omega \in C^{\infty}\Lambda^{k}(\Omega).
\end{align}
We introduce the exterior derivative of differential forms with coefficients in Lebesgue spaces defined in the sense of distributions.
We define the following Hilbert spaces of differential $k$-forms:
\begin{align}
	H\Lambda^{k}(\Omega)
	:=
	\left\{\;
	\omega \in L^{2}\Lambda^{k}(\Omega)
	\big|
	d\omega \in L^{2}\Lambda^{k+1}(\Omega)
	\;\right\}
	.
\end{align}
We can assemble the differential complex
\begin{align}
	\begin{CD}
		H\Lambda^{0}
		@>d>>
		H\Lambda^{1}
		@>d>>
		\cdots
		@>d>>
		H\Lambda^{n},
	\end{CD}
\end{align}
For each $r\in\mathbb N_0$, we write
$\mathcal P_r\Lambda^k$ for the space of polynomial differential
$k$-forms of degree at most $r$. For $r\geq1$, the trimmed polynomial
space is defined by
\[
\mathcal P_r^{-}\Lambda^k
=
\mathcal P_{r-1}\Lambda^k
+
X\llcorner\mathcal P_{r-1}\Lambda^{k+1},
\]
where $X$ is the source vector field. The standard inclusions are
\[
\mathcal P_{r-1}\Lambda^k
\subset
\mathcal P_r^{-}\Lambda^k
\subset
\mathcal P_r\Lambda^k.
\]
In particular,
\[
\mathcal P_r^{-}\Lambda^0
=
\mathcal P_r\Lambda^0,
\qquad
\mathcal P_r^{-}\Lambda^n
=
\mathcal P_{r-1}\Lambda^n.
\]

We consider polynomial de~Rham complexes.
For each $r \in \mathbb N_0$ there are two important examples:
the full polynomial de~Rham complex
\begin{equation*}
	\begin{CD}
		\mathcal P_{r}\Lambda^{0}
		@>d>>
		\mathcal P_{r-1}\Lambda^{1}
		@>d>>
		\cdots
		@>d>>
		\mathcal P_{r-n}\Lambda^{n},
	\end{CD}
\end{equation*}
and the trimmed polynomial complex
\begin{equation*}
	\begin{CD}
		\mathcal P_{r+1}^{-}\Lambda^{0}
		@>d>>
		\mathcal P_{r+1}^{-}\Lambda^{1}
		@>d>>
		\cdots
		@>d>>
		\mathcal P_{r+1}^{-}\Lambda^{n}.
	\end{CD}
\end{equation*}
More examples are found by combination of different arrows:
\begin{equation*}
	\begin{CD}
		\mathcal P_{r  }^{ }\Lambda^{k}
		@>d>>
		\mathcal P_{r-1}^{ }\Lambda^{k+1},
		@.
		\quad
		@.
		\mathcal P_{r+1}^{-}\Lambda^{k}
		@>d>>
		\mathcal P_{r+1}^{-}\Lambda^{k+1},
		\\
		\mathcal P_{r  }^{ }\Lambda^{k}
		@>d>>
		\mathcal P_{r  }^{-}\Lambda^{k+1},
		@.
		\quad
		@.
		\mathcal P_{r+1}^{-}\Lambda^{k}
		@>d>>
		\mathcal P_{r  }^{ }\Lambda^{k+1}
		.
	\end{CD}
\end{equation*}
In that manner, we can build $2^{n-1}$ different types of polynomial de~Rham complexes, each type parameterized over the polynomial degree $r$ and
having the form
\begin{equation*}
	\begin{CD}
		\mathcal P\Lambda^{0}
		@>d>>
		\mathcal P\Lambda^{1}
		@>d>>
		\cdots
		@>d>>
		\mathcal P\Lambda^{n},
	\end{CD}
\end{equation*}
where each $\mathcal P\Lambda^{k}$ is either a full polynomial space or a trimmed one.

Let $V_h^\bullet\subset V^\bullet$ be a finite-dimensional subcomplex with uniformly bounded commuting projection $\pi_h$. Let $\HH_h^k$ denote the discrete harmonic space,
\[
 \HH_h^k=\{v_h\in V_h^k:dv_h=0,\ \ip{v_h}{d\tau_h}=0\ \forall\tau_h\in V_h^{k-1}\}.
\]
The FEEC mixed state seeks
\[
 (\sigma_h,u_h,p_h)\in V_h^{k-1}\times V_h^k\times\HH_h^k
\]
satisfying the discrete analogue of \eqref{eqn:mixed-hodge-laplacian}. Standard FEEC theory gives uniform stability and quasi-optimality under the usual subcomplex and cochain-projection hypotheses \cite{arnold2010finite,arnold2010finite}.

\subsection{Discrete period map and harmonic reconstruction}
Let $b_k=\dim\HH^k$. On a topology-compatible mesh, $\dim\HH_h^k=b_k$. Choose discrete cycles or surfaces representing the same homology basis and define
\begin{equation}
 \Piop_h:\HH_h^k\to\R^{b_k}.
\end{equation}
A discrete period-normalized harmonic basis $h_{1,h},\dots,h_{b_k,h}$ satisfies
\[
 \Piop_hh_{i,h}=e_i.
\]
The reconstruction map $H_h:\R^{b_k}\to\HH_h^k$ is
\[
 H_hc=\sum_{i=1}^{b_k}c_i h_{i,h}.
\]
The discrete topological state equation is
\begin{equation}
 \Piop_hh_h=Ga_h+c_0,
 \qquad h_h=H_h(Ga_h+c_0).
\end{equation}
Thus the continuous and discrete models use the same physical period coordinates even though their harmonic basis fields differ.

\begin{assumption}[Uniform period stability]\label{ass:period}
There exists $C_\Pi>0$, independent of $h$, such that
\[
 \norm{\Piop_h}_{\mathcal L(\HH_h^k,\R^{b_k})}
 +\norm{\Piop_h^{-1}}_{\mathcal L(\R^{b_k},\HH_h^k)}\le C_\Pi.
\]
Moreover, after identification by the commuting projection,
\[
 \norm{H-H_h}_{\mathcal L(\R^{b_k},W^k)}\to0.
\]
\end{assumption}

The discrete objective is obtained by replacing $(\sigma,u,h)$ with $(\sigma_h,u_h,h_h)$. The discrete adjoint equations are precisely the transpose mixed system together with a $b_k$-dimensional topological adjoint block. No discrete strong adjoint operator is required.

In coefficient form, if $H_h$ is the matrix whose columns are a mass-orthonormal or period-normalized harmonic basis, the physical state vector is
\[
 y_h=u_h+H_hc_h,\qquad c_h=Ga_h+c_0.
\]
The state saddle matrix retains the harmonic border that enforces $u_h\perp\HH_h^k$, while the objective and the topological control act on $H_hc_h$, not on the compatibility coefficient $p_h$.

\begin{proposition}[Discrete period functionals]\label{prop:discrete-period}
Let $V_h^{\bullet}\subset V^{\bullet}$ be a subcomplex with discrete harmonic
space $\HH_h^{k}$, and let $\tilde{J}_h$ satisfy
\begin{equation}\label{eq:Jh-coclosed}
 P_{\HH_h}\tilde J_h=P_{\HH_h} J_h\text{ and }\ip{d\tau_h}{\tilde J_h}=0\qquad\text{for all }\tau_h\in V_h^{k-1}.
\end{equation}
Then for every discretely closed $v_h$ it holds that 
\begin{equation}\label{eq:discrete-period-both}
    \ip{v_h}{\tilde J_h} = \ip{v_h}{J_h}
\end{equation}
Moreover $\Piop_{\tilde J_h}(v_h)=\ip{v_h}{\tilde J_h}$ annihilates $dV_h^{k-1}$ exactly.
\end{proposition}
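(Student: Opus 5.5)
The proof will mirror the argument for Theorem~\ref{thm:period-pairing}, with the continuous Hodge decomposition replaced by its discrete counterpart on the subcomplex $V_h^\bullet$. Write $\f Z_h^k=\{v_h\in V_h^k: dv_h=0\}$ and $\f B_h^k=dV_h^{k-1}$. Since $\f B_h^k$ is finite dimensional, hence closed, we have the $W^k$-orthogonal splitting
\[
\f Z_h^k=\f B_h^k\oplus\HH_h^k .
\]
This holds because, by its definition, $\HH_h^k$ is exactly the orthogonal complement of $\f B_h^k$ inside $\f Z_h^k$. So every discretely closed $v_h$ can be written as $v_h=d\tau_h+P_{\HH_h}v_h$ for some $\tau_h\in V_h^{k-1}$, where $P_{\HH_h}$ is the $W^k$-orthogonal projection onto $\HH_h^k$.

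The last assertion is immediate from the second condition in \eqref{eq:Jh-coclosed}: $\Piop_{\tilde J_h}(d\tau_h)=\ip{d\tau_h}{\tilde J_h}=0$ for all $\tau_h\in V_h^{k-1}$. For the main identity I then expand
\[
\ip{v_h}{\tilde J_h}=\ip{d\tau_h}{\tilde J_h}+\ip{P_{\HH_h}v_h}{\tilde J_h}=\ip{P_{\HH_h}v_h}{P_{\HH_h}\tilde J_h}=\ip{P_{\HH_h}v_h}{P_{\HH_h}J_h}.
\]
The first term vanishes by coclosedness of $\tilde J_h$. The second equality holds because $P_{\HH_h}v_h\in\HH_h^k$ only sees the harmonic part of $\tilde J_h$. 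The third uses the first hypothesis in \eqref{eq:Jh-coclosed}.

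The step I expect to be the obstacle is closing the chain on the $J_h$ side, i.e.\ showing
\[
\ip{v_h}{J_h}=\ip{P_{\HH_h}v_h}{P_{\HH_h}J_h}.
\]
Repeating the expansion gives $\ip{v_h}{J_h}=\ip{d\tau_h}{J_h}+\ip{P_{\HH_h}v_h}{P_{\HH_h}J_h}$. So what is needed is $\ip{d\tau_h}{J_h}=0$, i.e.\ that $J_h$ also annihilates $dV_h^{k-1}$, and the hypotheses as stated only give this for $\tilde J_h$. I will read $J_h$ as in the intended setting, namely a continuous period field $J_h\in\f Z_k^*$, as in Theorem~\ref{thm:period-pairing} and Corollary~\ref{cor:period-derham}. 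Then the subcomplex property gives $d\tau_h\in\f B^k$, and the closedness of the complex gives $\f B^{k\perp}=\f Z^*_k$, so $\ip{d\tau_h}{J_h}=0$ and the two pairings agree.

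I will flag explicitly in the proof that this coclosedness of $J_h$, whether continuous or discrete, is used. Without it the identity \eqref{eq:discrete-period-both} can fail, since $\ip{\cdot}{J_h}$ may then see the exact part $d\tau_h$ of $v_h$, while $\ip{\cdot}{\tilde J_h}$ never does. The role of $\tilde J_h$ is precisely to be a discretely coclosed replacement for $J_h$ with the same discrete harmonic part.
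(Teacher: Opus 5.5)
Your argument is the paper's own: split a discretely closed $v_h$ along $\f Z_h^k=\f B_h^k\oplus\HH_h^k$, discard the exact part, and match harmonic parts through $P_{\HH_h}\tilde J_h=P_{\HH_h}J_h$. The paper's proof needs $\ip{d\tau_h}{J_h}=0$ exactly as you do, but it simply asserts that both functionals annihilate $\f B_h^k$. The extra hypothesis you flag is therefore a real repair of the statement and not a change of route: without it, $J_h=d\tau_h\neq0$, $\tilde J_h=0$, $v_h=d\tau_h$ is a counterexample.

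One caution: $J_h\in\f Z^*_k$ is not the setting the paper actually uses. In Remark~\ref{rem:leray} the raw $J_h$ is an assembled field that leaks onto $\f B_h^k$. There only the weaker identity $\ip{v_h}{\tilde J_h}=\ip{P_{\HH_h}v_h}{J_h}$ for discretely closed $v_h$ (agreement on $\HH_h^k$) holds, and that is all the remark needs.
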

\begin{proof}

Condition \eqref{eq:Jh-coclosed} is the statement that $\Piop_{\tilde J_h}$ annihilates
$\f B_h^{k}=dV_h^{k-1}$. A discretely closed $v_h$ splits as
$v_h=P_{\HH_h}v_h+b_h$ with $b_h\in\f B_h^{k}$, and
$(I-P_{\HH_h})J_h\perp\HH_h^{k}$. 
Thus both functionals annihilate $\f B_h^k=dV^{k-1}_h$ and their harmonic forms coincide, so they agree on $\f Z^k_h=\f B^k_h\oplus \HH^k_h$.
\end{proof}

\begin{remark}[Enforcing \eqref{eq:Jh-coclosed}]\label{rem:leray}
Condition \eqref{eq:Jh-coclosed} is not inherited from a continuous $J$
satisfying the hypotheses of Corollary~\ref{cor:period-derham}. The discrete
boundary is not the boundary of $\Omega$, and interpolation of a rational field
does not commute with the exterior derivative, so the assembled functional
leaks onto $\f B_h^{k}$ at the size of the geometric error. The remedy costs
nothing. Let $\phi_h\in V_h^{k-1}$ solve
\[
 \ip{d\phi_h}{d\tau_h}=\ip{J_h}{d\tau_h}\qquad\forall\tau_h\in V_h^{k-1},
\]
gauge-fixed on $\ker d$, and replace $J_h$ by $J_h-d\phi_h$. The replacement
satisfies \eqref{eq:Jh-coclosed} by construction, and since
$d\phi_h\in\f B_h^{k}\perp\HH_h^{k}$ its harmonic part is unchanged, so by
Proposition~\ref{prop:discrete-period} the functional it induces on
$\HH_h^{k}$ is the same. In matrix form, with $D$ the incidence matrix of
$d^{k-1}$ and $b_J$ the assembled load vector, the condition is
$D^{\top}b_J=0$ and the correction is $b_J\mapsto b_J-MD\chi$ with
$(D^{\top}MD)\chi=D^{\top}b_J$.
\end{remark}

\subsection{Error estimates}
\label{sec:error-estimates}

We record the abstract estimate needed to transfer FEEC consistency to the optimal controls.  Let
\[
  X:=Z\times\R^m,
  \qquad
  x:=(z,a),
  \qquad
  \|x\|_X^2:=\|z\|_Z^2+|a|^2,
\]
and let $\widehat J$ and $\widehat J_h$ be the continuous and discrete reduced objectives.  Write
\[
  \gamma_0:=\min\{\alpha,\alpha_{\rm top}\}>0.
\]
For simplicity of presentation the theorem is stated first for variational discretization, so the admissible control set is the same in the continuous and discrete problems.  The usual best-approximation terms are then added for discretized control spaces.

To compare the continuous and discrete operators in common spaces, let
$P_h^W$ denote the bounded FEEC data projection and let
$\iota_h^X$ denote the natural reconstruction of the discrete mixed
state in the continuous mixed-state space.  Likewise, let
$P_h^{X^*}$ and $\iota_h^W$ denote the corresponding adjoint data
projection and reconstruction.  Define
\[
\widetilde S_h:=\iota_h^X S_hP_h^W,
\qquad
\widetilde S_h^*:=\iota_h^W S_h^*P_h^{X^*},
\qquad
\widetilde H_h:=\iota_h^{\HH}H_h,
\]
where
\[
\iota_h^{\HH}:\HH_h^k\hookrightarrow W^k
\]
is the natural inclusion.
Thus all operator differences below are taken between maps with the
same domains and codomains.

\begin{theorem}[Control error from reduced-gradient consistency]
\label{thm:control-error}
Assume the hypotheses of Theorem~\ref{thm:existence}, uniform FEEC stability of the mixed state and adjoint operators, and uniform period stability in Assumption~\ref{ass:period}.  Let $x^*=(z^*,a^*)$ and $x_h^*=(z_h^*,a_h^*)$ minimize $\widehat J$ and $\widehat J_h$ over the same nonempty closed convex admissible set.  Then
\begin{equation}\label{eq:control-error-abstract}
  \gamma_0\|x^*-x_h^*\|_X
  \le
  \|\nabla\widehat J(x_h^*)-\nabla\widehat J_h(x_h^*)\|_{X^*}.
\end{equation}
Moreover, on every bounded set containing the optimizers,
\begin{equation}\label{eq:gradient-consistency}
\begin{aligned}
\|\nabla\widehat J(x)-\nabla\widehat J_h(x)\|_{X^*}
\le C\big(
&\|S-\widetilde S_h\|
+\|S^*-\widetilde S_h^*\|
+\|H-\widetilde H_h\|
\big).
\end{aligned}
\end{equation}
where $S$ and $S^*$ denote the continuous mixed state and weak adjoint
solution operators, respectively, while $\widetilde S_h$ and
$\widetilde S_h^*$ denote their lifted discrete counterparts acting
between the corresponding continuous spaces. Likewise, $H$ is the
continuous period-normalized harmonic reconstruction map and
$\widetilde H_h$ is its lifted discrete counterpart. Consequently,
\begin{equation}\label{eq:control-error-operator}
  \|z^*-z_h^*\|_Z+|a^*-a_h^*|
  \le
  \frac{C}{\gamma_0}
  \big(
    \|S-\widetilde S_h\|
    +\|S^*-\widetilde S_h^*\|
    +\|H-\widetilde H_h\|
  \big).
\end{equation}
If the controls are also discretized, the right-hand side acquires the standard approximation terms
\[
  \operatorname{dist}(z^*,Z_{{\rm ad},h})
  +\operatorname{dist}(a^*,A_{{\rm ad},h}).
\]
\end{theorem}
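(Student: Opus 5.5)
The estimate \eqref{eq:control-error-abstract} will come from strong convexity combined with the two variational inequalities. By \eqref{eq:hessian-lower}, the continuous reduced objective $\widehat J$ is a quadratic whose Hessian dominates $\gamma_0 I$ on $X$. Its gradient is therefore strongly monotone:
\[
\ip{\nabla\widehat J(x)-\nabla\widehat J(y)}{x-y}_X\ge\gamma_0\|x-y\|_X^2 .
\]
Since $x^*$ minimizes $\widehat J$ over the admissible set, $\ip{\nabla\widehat J(x^*)}{\tilde x-x^*}\ge0$ for all admissible $\tilde x$. Likewise $\ip{\nabla\widehat J_h(x_h^*)}{\tilde x-x_h^*}\ge0$. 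Test the first inequality with $\tilde x=x_h^*$, the second with $\tilde x=x^*$, and add. Inserting $\pm\nabla\widehat J(x_h^*)$ then gives
\[
\gamma_0\|x^*-x_h^*\|_X^2\le\ip{\nabla\widehat J(x_h^*)-\nabla\widehat J_h(x_h^*)}{x^*-x_h^*}.
\]
Cauchy--Schwarz and division by $\|x^*-x_h^*\|_X$ yield \eqref{eq:control-error-abstract}. Only convexity of the continuous problem is used here.

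For \eqref{eq:gradient-consistency}, I will write both gradients in adjoint form, as in Section~\ref{sec:reduced-gradient-methods}:
\[
\nabla\widehat J(x)=\bigl(\alpha z+B^*\mu(x),\ \alpha_{\rm top}a-G^T\xi(x)\bigr),
\]
with $\mu=S^*(\text{residuals})$ and with $\xi$ given explicitly by \eqref{eq:top-adjoint-matrix} in terms of $H$. The discrete gradient has the same form with $S_h$, $S_h^*$, $H_h$, lifted so that all operators act between the same spaces. The regularization terms cancel in the difference. What remains is $B^*(\mu-\mu_h)$ and $G^T(\xi-\xi_h)$.

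The adjoint residual data are built from the state $S(f+Bz)$, from $HGa+Hc_0$, and from the targets. I will expand each difference telescopically, for example
\[
S^*r-\widetilde S_h^*r_h=(S^*-\widetilde S_h^*)r+\widetilde S_h^*(r-r_h),
\]
and similarly $r-r_h$ splits into a $(S-\widetilde S_h)$ term and an $(H-\widetilde H_h)$ term. The same splitting applies to the three terms in $\xi$, where $H^*$ also appears, with $\|H^*-\widetilde H_h^*\|=\|H-\widetilde H_h\|$.

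On a bounded set of $x$, the data $f+Bz$, $Ga+c_0$, $y_d$, $r_d$, $\pi_d$ are uniformly bounded. The discrete operators $\widetilde S_h$ and $\widetilde S_h^*$ are uniformly bounded by FEEC stability, and $\widetilde H_h$ by Assumption~\ref{ass:period}. Every product is therefore bounded by a constant times one of the three operator differences, which gives \eqref{eq:gradient-consistency}.

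To obtain \eqref{eq:control-error-operator}, I still need $x_h^*$ to lie in a bounded set independent of $h$. Comparing with a fixed admissible point $x_0$ gives
\[
\tfrac{\gamma_0}{2}\|x_h^*\|^2\le\widehat J_h(x_0),
\]
and $\widehat J_h(x_0)$ is uniformly bounded by the same stability bounds. Combining the two displayed estimates then finishes the variational-discretization case.

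For discretized controls, the variational inequality for $x_h^*$ holds only on $X_{{\rm ad},h}$. I will instead test it with the projection $\hat x_h$ of $x^*$ onto $X_{{\rm ad},h}$. The extra term $\ip{\nabla\widehat J_h(x_h^*)}{x^*-\hat x_h}$ is bounded by $C\,\operatorname{dist}(x^*,X_{{\rm ad},h})$. The main obstacle is bookkeeping of the lifts $\iota_h$ and the projections $P_h$, so that the operator differences are well-defined and the cross terms really factor through them. Without an Aubin--Nitsche argument this route gives the stated first-power bound on the distance terms, possibly with a square root appearing after Young's inequality, which I would absorb into the constant convention.
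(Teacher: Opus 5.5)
Your argument for \eqref{eq:control-error-abstract}--\eqref{eq:control-error-operator} is the paper's own. It uses strong monotonicity of $\nabla\widehat J$ from the positive regularization, the two variational inequalities tested crosswise and added, and the gradient defect written through the state, weak-adjoint and harmonic-reconstruction maps and split by adding and subtracting.

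There are two small points. First, adding the inequalities actually gives $\gamma_0\|x_h^*-x^*\|_X^2\le\langle\nabla\widehat J(x_h^*)-\nabla\widehat J_h(x_h^*),\,x_h^*-x^*\rangle$, with $x_h^*-x^*$ in the second slot rather than $x^*-x_h^*$. This is harmless after Cauchy--Schwarz. Second, your bound $\tfrac{\gamma_0}{2}\|x_h^*\|_X^2\le\widehat J_h(x_0)$ is a real addition. The paper only places the optimizers in ``the relevant bounded set'', and an $h$-uniform bound is what makes the constant in \eqref{eq:control-error-operator} independent of $h$. You could also bypass that bound by using strong convexity of $\widehat J_h$ instead of $\widehat J$. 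This gives $\gamma_0\|x^*-x_h^*\|_X\le\|\nabla\widehat J(x^*)-\nabla\widehat J_h(x^*)\|_{X^*}$, with the defect evaluated at the fixed point $x^*$.

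Your proposal breaks at the discretized-control addendum. Write $e=x_h^*-x^*$ and $\delta_h$ for the gradient defect. Testing with $\hat x_h$ gives $\gamma_0\|e\|^2\le\|\delta_h\|\,\|e\|+C\operatorname{dist}(x^*,X_{{\rm ad},h})$, hence only $\|e\|\lesssim\|\delta_h\|+\operatorname{dist}^{1/2}$. A term $\operatorname{dist}^{1/2}$ cannot be absorbed into $C\operatorname{dist}$ as the distance tends to zero, so no constant convention yields the stated first-power term. To get it, you must show that the extra term is itself of higher order.

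In the unconstrained case $\nabla\widehat J(x^*)=0$. Then $\langle\nabla\widehat J_h(x_h^*),\hat x_h-x^*\rangle=\langle\nabla\widehat J_h(x_h^*)-\nabla\widehat J(x^*),\hat x_h-x^*\rangle\le(\|\delta_h\|+L\|e\|)\operatorname{dist}$, where $L$ is the Lipschitz constant of $\nabla\widehat J$. Young's inequality then gives $\|e\|\le C(\|\delta_h\|+\operatorname{dist})$.

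With active constraints, the leftover $\langle\nabla\widehat J(x^*),\hat x_h-x^*\rangle$ is nonnegative and in general only $O(\operatorname{dist})$. This is the familiar square-root loss for discretized variational inequalities. First order needs extra structure, for instance:
\begin{itemize}
\item box constraints with piecewise constant controls;
\item $X_{{\rm ad},h}\subset X_{\rm ad}$, which you also need just to test the continuous inequality with $x_h^*$;
\item $\hat x_h=Q_hx^*$, the cellwise $L^2$ projection.
\end{itemize}
Then the term equals $-\langle(I-Q_h)\nabla\widehat J(x^*),(I-Q_h)x^*\rangle$, a product of two approximation errors. The paper's one-line appeal to ``suitable admissible approximants'' does not settle this either. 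The addendum should therefore be proved under such hypotheses rather than by adjusting constants.
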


\begin{proof}
Strong convexity of the reduced objective follows from the positive regularization terms and gives
\[
  \ip{\nabla\widehat J(x_h^*)-\nabla\widehat J(x^*)}{x_h^*-x^*}
  \ge \gamma_0\|x_h^*-x^*\|_X^2.
\]
The continuous and discrete variational inequalities, tested with $x_h^*$ and $x^*$ respectively and then added, imply
\[
  \gamma_0\|x_h^*-x^*\|_X^2
  \le
  \ip{\nabla\widehat J(x_h^*)-\nabla\widehat J_h(x_h^*)}{x_h^*-x^*},
\]
which yields \eqref{eq:control-error-abstract} by duality.  The reduced gradients are obtained by composing the observation operators with the state and weak adjoint solution maps, while the topological component is composed with the harmonic reconstruction.  Boundedness of $O$, $R$, $B$, and $G$, together with uniform boundedness of the state, adjoint, and period maps on the relevant bounded set, gives \eqref{eq:gradient-consistency} by adding and subtracting the continuous and discrete compositions.  Substitution yields \eqref{eq:control-error-operator}.  If the admissible controls are discretized, comparison with suitable admissible approximants gives the stated additional best-approximation terms.
\end{proof}
Define the gap between the continuous and discrete harmonic spaces by
\[
\delta_h
:=
\operatorname{gap}(\HH^k,\HH_h^k)
:=
\max\left\{
\sup_{\substack{q\in\HH^k\\\|q\|=1}}
\inf_{q_h\in\HH_h^k}\|q-q_h\|,
\;
\sup_{\substack{q_h\in\HH_h^k\\\|q_h\|=1}}
\inf_{q\in\HH^k}\|q_h-q\|
\right\}.
\]
The state error follows from FEEC quasi-optimality plus the control error.  In particular,
\begin{equation}
\begin{aligned}
&\|\sigma^*-\sigma_h^*\|_V
+\|u^*-u_h^*\|_V
+\|p^*-p_h^*\|
+\|h^*-h_h^*\|
\\
&\quad\le C\Big(
\inf_{\tau_h}\|\sigma^*-\tau_h\|_V
+
\inf_{v_h}\|u^*-v_h\|_V
+
\inf_{q_h}\|p^*-q_h\|
\\
&\hspace{18mm}
+\delta_h\|f+Bz^*\|
+\|H-\widetilde H_h\|\,|Ga^*+c_0|
+\|z^*-z_h^*\|_Z
+|a^*-a_h^*|
\Big).
\end{aligned}
\end{equation}
The same argument applies to the weak adjoint variables.  A rate statement is therefore obtained by inserting the regularity-dependent FEEC approximation estimates for the selected family and the approximation order of the period-normalized harmonic basis.  The numerical examples deliberately include both a smooth/topologically trivial case and a geometry in which topology forces a singular harmonic representative, so that these regularity-dependent rates can be observed rather than hidden inside an operator norm.

\subsection{Cavity Shell FEEC Discretization}
\label{sec:shell-feec}
 
Let $\mathcal T_h$ be a tetrahedral mesh of the shell. At degree two we
use the compatible pair
\[
  \sigma_h\in\mathcal P^-_r\Lambda^1(\mathcal T_h),\qquad
  u_h\in\mathcal P^-_r\Lambda^2(\mathcal T_h),
\]
N\'ed\'elec and Raviart--Thomas in vector proxies, with $r=1$ in the
experiments below and the control in the piecewise-constant vector
space $\mathrm{DG}_0$. Let $\HH^2_h$ be the discrete harmonic space;
for topology-faithful meshes $\dim\HH^2_h=1$. The discrete physical
harmonic state is
\begin{equation}\label{eq:shell-discrete-h}
  h_h=c_h\,h_{2,h},\qquad c_h=g\,a_h+c_0,
\end{equation}
so that the complete physical state is $y_h=u_h+h_h$ by
\eqref{eq:shell-discrete-h}, the local mixed state
retains the harmonic border enforcing $u_h\perp\HH^2_h$, and the
objective acts on $y_h$ and on the period $c_h$.
 
The generator $h_{2,h}$ is obtained from
\eqref{eq:shell-radial-harmonic} in two projections, and neither is
optional.
 
First a Leray projection onto the discretely divergence-free
Raviart--Thomas subspace, from the saddle system pairing the mass
matrix with the divergence coupling into $\mathrm{DG}_0$. This is
required because the interpolant of \eqref{eq:shell-radial-harmonic} is
\emph{not} discretely divergence-free. The canonical Raviart--Thomas
degrees of freedom are face fluxes, and the commuting property
$\divergence(\Pi_{RT}v)=P_0(\divergence v)$ holds only when those
fluxes are evaluated exactly; a quadrature rule that is exact for
polynomials is not exact for a rational field, so the cell divergences
do not telescope to zero. The residual is large, not small: on the
meshes used below it is comparable to the norm of the field itself.
 
Second the removal of the exact part. At degree two the exact forms are
curls of N\'ed\'elec functions, so the projection uses the curl--curl
operator on $\mathcal P^-_1\Lambda^1$, which is singular on gradients;
we gauge-fix with a multiplier against the gradient coupling from
$\mathcal P_1\Lambda^0$ and subtract the discrete curl of the resulting
potential, whose Raviart--Thomas interpolant is exact.
 
The period functional is assembled from
\eqref{eq:shell-period-potential} rather than from a mesh-resolved
surface. Both hypotheses of Lemma~\ref{lem:flux-period} must hold
discretely and neither is automatic. The divergence condition is what
the Leray projection supplies. The boundary condition fails if $\psi$
is taken as the radial function
$(r_{\rm out}-|x-x_0|)/(r_{\rm out}-r_{\rm in})$, because the discrete
boundary is made of chords whose interiors do not lie on the spheres;
we therefore take $\psi_h\in\mathcal P_1\Lambda^0$ with vertex values
pinned to one on the cavity wall and zero on the outer sphere, which is
exactly constant on every boundary face. The assembled functional is
then projected so that it annihilates the discrete curls, exactly as
the degree-one period functionals are projected against the discrete
gradients.
 
The defining diagnostics for the generator are
\begin{equation}\label{eq:shell-harmonic-diagnostics}
  \norm{\divergence h_{2,h}}=0,\qquad
  \ip{h_{2,h}}{\curl\tau_h}=0\ \ \forall\tau_h,\qquad
  \Piop_h h_{2,h}=1,
\end{equation}
all three holding to round-off rather than in the limit, once the two
projections are in place.

\subsection{Maxwell System FEEC semidiscretization}
\label{sec:maxwell-feec}

Let
\[
  V_h^0\xrightarrow{d}V_h^1\xrightarrow{d}V_h^2\xrightarrow{d}V_h^3
\]
be a FEEC subcomplex with a uniformly bounded commuting projection.  Let $\HH_{E,h}^1$ and $\HH_{B,h}^2$ be the corresponding weighted discrete harmonic spaces.  For topology-compatible meshes,
\[
  \dim\HH_{E,h}^1=b_1,
  \qquad
  \dim\HH_{B,h}^2=b_2.
\]
Choose discrete period-normalized harmonic bases and reconstruction maps
\[
  H_{E,h}:\R^{b_1}\to\HH_{E,h}^1,
  \qquad
  H_{B,h}:\R^{b_2}\to\HH_{B,h}^2,
\]
with
\[
  \Piop_{E,h}H_{E,h}=I_{b_1},
  \qquad
  \Piop_{B,h}H_{B,h}=I_{b_2}.
\]
The semidiscrete physical fields are
\begin{equation}
  E_h=e_h+H_{E,h}c_{E,h},
  \qquad
  B_h=b_h+H_{B,h}c_{B,h}.
\end{equation}
The local fields solve the Galerkin version of \eqref{eq:maxwell-local-state}, and the period vectors solve
\begin{subequations}
\begin{align}
  M_{E,h}^H\dot c_{E,h}
  &=-j_{H,h}-C_{E,h}z_h+G_Ea_{E,h}+g_E,\\
  M_{B,h}^H\dot c_{B,h}
  &=G_Ba_{B,h}+g_B.
\end{align}
\end{subequations}
The same time integrator should be used consistently for the local and topological blocks when conservation or symplectic structure is important.  The finite-dimensional topological equations may also be integrated exactly over each time step when the controls are piecewise polynomial in time.

The discrete adjoint is obtained by transposing the semidiscrete evolution system and integrating backward in time.  In particular, the harmonic adjoints remain exactly $b_1$- and $b_2$-dimensional.  No discrete approximation of a strong codifferential is needed.

\paragraph{Ball with Cavity}

Let
\[
  V_h^1\subset H(\curl;\Omega),
  \qquad
  V_h^2\subset H(\divergence;\Omega)
\]
be a compatible FEEC subcomplex, for example first-kind N\'ed\'elec
and Raviart--Thomas spaces of compatible degree.  The discrete
magnetic harmonic space is
\[
  \mathfrak H_{B,h}^2
  =
  \left\{
    q_h\in V_h^2:
    d q_h=0,\;
    q_h\perp dV_h^1
  \right\}.
\]
For a topology-faithful shell mesh,
\[
  \dim\mathfrak H_{B,h}^2=1.
\]
Choose $h_{2,h}$ normalized by
\begin{equation}
  \Pi_{B,h}h_{2,h}
  =
  \int_{\Sigma_h}h_{2,h}\cdot n_h\,dS
  =
  1.
\end{equation}
The semidiscrete magnetic field is decomposed as
\begin{equation}
  B_h(t)=b_h(t)+c_{B,h}(t)h_{2,h},
  \qquad
  b_h(t)\perp\mathfrak H_{B,h}^2.
\end{equation}
Projecting the semidiscrete Faraday equation onto $h_{2,h}$ yields
\begin{equation}
  m_{B,h}\dot c_{B,h}
  =
  g_{B,h}a_{B,h},
  \qquad
  m_{B,h}
  =
  \ip{\mu^{-1}h_{2,h}}{h_{2,h}},
\end{equation}
which is the discrete counterpart of
\eqref{eq:max-shell-top-dynamics}.

The FEEC approximation therefore preserves both pieces of the
continuous structure: the local Maxwell evolution on the
harmonic-orthogonal subspace and the finite-dimensional cohomological
flux dynamics.

\subsection{Error decomposition}
\label{sec:maxwell-error}

The total physical-field errors split into local FEEC error, topological-basis error, and period-control error:
\begin{align}
  \norm{E-E_h}
  &\le \norm{e-e_h}
  +\norm{(H_E-H_{E,h})c_E}
  +\norm{H_{E,h}(c_E-c_{E,h})},
  \\
  \norm{B-B_h}
  &\le \norm{b-b_h}
  +\norm{(H_B-H_{B,h})c_B}
  +\norm{H_{B,h}(c_B-c_{B,h})}.
\end{align}
Under the standard FEEC approximation hypotheses for the local evolution problem, uniform stability of the discrete period maps, and a stable time integrator, one obtains an estimate of the schematic form
\begin{equation}
\begin{aligned}
 &\norm{z-z_h}_{L^2(0,T;Z)}
 +\norm{a_E-a_{E,h}}_{L^2(0,T)}
 +\norm{a_B-a_{B,h}}_{L^2(0,T)}\\
 &\quad\le C\Big(
   \mathcal E_{\rm FEEC}(h)
  +\mathcal E_{\rm period}(h)
  +\mathcal E_{\rm time}(\Delta t)
  +\mathcal E_{\rm control}(h,\Delta t)
 \Big),
\end{aligned}
\end{equation}
where $\mathcal E_{\rm FEEC}$ is the local Maxwell approximation error, $\mathcal E_{\rm period}$ measures $H_E-H_{E,h}$ and $H_B-H_{B,h}$, $\mathcal E_{\rm time}$ is the time-discretization error, and $\mathcal E_{\rm control}$ is present only when the control spaces are discretized.  Positive regularization makes the reduced objective strongly convex, so state/adjoint consistency estimates transfer directly to the optimal controls.

\section{Numerical experiments - Stationary Hodge Laplacian}\label{sec:numerics}

The experiments of this section verify the two halves of the formulation
separately and then together: the FEEC discretization of the local mixed
Hodge--Laplace problem, and the finite-dimensional topological subsystem
carried by the period coordinates. Four domains are used, chosen so that the
harmonic dimension takes the values $0$, $1$, $2$ and, at the other degree,
$1$ again. On each of them Proposition~\ref{prop:balance} is checked against
computed quantities, and on the solid torus and the spherical shell it is
checked against a harmonic norm known in closed form. No exact solution of the
mixed system is available for the data used, so local state and adjoint errors
are not reported; the convergence evidence is instead the harmonic norm, the
period coordinates and the reduced objective, each against an exact or
extrapolated limit.

Throughout, $z$ is discretized in the piecewise-constant vector space
$\mathrm{DG}_{0}$, the control bounds are inactive at every optimum reported,
and the reduced problem is solved by conjugate gradients in the $M_{z}$ inner
product.

\subsection{Contractible domain: local control only}\label{sec:lshape}

On the L-shaped domain of Section~\ref{ex:lshape} the domain is contractible,
so $b_{1}=0$, the harmonic space $\HH^{1}$ is trivial, the compatibility
multiplier $p$ is absent, and the topological subsystem
\eqref{eq:intro-top-control} is empty: there is no period map, no actuator
matrix $G$ and no topological control $a$. The objective reduces to
\[
 J=\frac{w_y}{2}\norm{Ou-y_d}^2
  +\frac{w_\sigma}{2}\norm{R\sigma-r_d}^2
  +\frac{\alpha}{2}\norm{z}^2 ,
\]
and the example isolates the local part of the formulation, providing the
baseline against which the multiply connected domains are compared.

We solve the $k=1$ curl--curl system \eqref{eq:curlcurl} with the pair the
framework prescribes: the gauge-fixed state $u$ in the first-kind N\'ed\'elec
space $\mathcal{P}^{-}_{1}\Lambda^{1}$ and $\sigma$ in the Lagrange space
$\mathcal{P}_{1}\Lambda^{0}$. The scalar unknown is the co-derivative
$\sigma=d^{*}u$, equal to $-\divergence u$ for the de Rham $1$-form Hodge
Laplacian, and it is recovered weakly from the first equation of
\eqref{eq:curlcurl} together with the natural condition $u\cdot n=0$. We take
$B=I$, $O=R=I$, $w_{y}=w_{\sigma}=\alpha=1$, and $f=0$. The discrete harmonic
space is
\[
  \HH^{1}_{h}
  =\bigl\{\,u_{h}\in\mathcal{P}^{-}_{1}\Lambda^{1}:
    \ip{\curl u_{h}}{\curl v_{h}}=0,\quad
    \ip{u_{h}}{\grad\tau_{h}}=0
    \ \ \forall v_{h},\tau_{h}\,\bigr\},
\]
the discrete form of $\HH^{1}=\ker(\curl)\cap\ker(\divergence)$, orthogonality
to the range of the gradient being the discrete statement of
divergence-freeness.

\medskip
\noindent\textbf{The discrete topology.}
The mesh is generated rather than assumed, so we verify the topology by the
combinatorial computation of Section~\ref{sec:betti}. At $N=8$ it returns
$n_{V}=665$, $n_{E}=3736$, $n_{F}=5760$, $n_{T}=2688$, hence $\chi=1$, with
one connected component, one boundary surface, and all vertex and edge stars
connected, giving $(b_{0},b_{1},b_{2},b_{3})=(1,0,0,0)$. Therefore
$\HH^{1}_{h}=\{0\}$ and $p_{h}=0$. The same conclusion follows spectrally: on
that mesh the state operator has condition number $1.17\times10^{5}$, smallest
singular value $1.32\times10^{-3}$, and numerical nullity zero. The two
determinations of $\dim\HH^{1}_{h}$ are independent, one combinatorial and one
spectral.

\medskip
\noindent\textbf{Discrete operators.}
Let $\{\varphi_{j}\}$, $\{\psi_{i}\}$ and $\{\chi_{l}\}$ be the bases of
$\mathcal{P}_{1}\Lambda^{0}$, $\mathcal{P}^{-}_{1}\Lambda^{1}$ and
$\mathrm{DG}_{0}$, and let $M_{\sigma}$, $M_{u}$, $M_{z}$ be the corresponding
mass matrices, $K_{ij}=\ip{\curl\psi_{j}}{\curl\psi_{i}}$ the curl--curl
stiffness, $G_{ij}=\ip{\grad\varphi_{j}}{\psi_{i}}$ the discrete gradient
coupling, and $C_{il}=\ip{\chi_{l}}{\psi_{i}}$ the control-to-state coupling.
Since $b_{1}=0$ the harmonic border is empty and the state and adjoint systems
are
\begin{equation}\label{eq:lshape-A0}
  A_{0}\begin{pmatrix}\sigma\\u\end{pmatrix}
  =\begin{pmatrix}0\\Cz\end{pmatrix},
  \qquad
  A_{0}^{\top}\begin{pmatrix}\lambda\\\mu\end{pmatrix}
  =\begin{pmatrix}
     w_{\sigma}W_{\sigma}(\sigma-r_{d})\\
     w_{y}W_{y}(u-y_{d})\end{pmatrix},
  \qquad
  A_{0}=\begin{pmatrix}M_{\sigma}&-G^{\top}\\ G&K\end{pmatrix},
\end{equation}
with observation weights $W_{y}=O^{*}O$ and $W_{\sigma}=R^{*}R$, equal to
$M_{u}$ and $M_{\sigma}$ here. The reduced gradient in the $M_{z}$ inner
product is $\widehat g=\alpha z+M_{z}^{-1}C^{\top}\mu$, and
\[
  \widehat H=\alpha I+M_{z}^{-1}C^{\top}A_{0}^{-\top}WA_{0}^{-1}C
\]
is symmetric positive definite in that inner product, so we apply conjugate
gradients there, at one state solve and one adjoint solve per iteration. We do
not form $\widehat H$; its action on a direction $d$ is
$\alpha d+M_{z}^{-1}C^{\top}\mu_{d}$ with $\mu_{d}$ obtained from
\eqref{eq:lshape-A0} with right-hand side $(0,Cd)^{\top}$ and no target term.
A single sparse $LU$ factorization of $A_{0}$ serves both solves, and $M_{z}$
is diagonal. The reduced problem is quadratic, so its minimizer solves
$\widehat Hz=-\widehat g(0)$ and no line search is needed.

\medskip
\noindent\textbf{Results.}
The targets are
\begin{equation}\label{eq:lshape-targets}
  y_{d}=\bigl(0.1\sin(\pi x)\cos(\pi y),\
              0.1\cos(\pi x)\sin(\pi y),\ 0.05\,z\bigr)^{\top},
  \qquad
  r_{d}=0.1\sin(\pi x)\sin(\pi y).
\end{equation}
The state target is a gradient, $y_{d}=\grad\psi$ with
\begin{equation}\label{eq:psi}
  \psi(x,y,z)=-\frac{0.1}{\pi}\cos(\pi x)\cos(\pi y)+0.025\,z^{2},
\end{equation}
a fact of no consequence here, where $\HH^{1}=\{0\}$, but used in
Sections~\ref{sec:torus} and \ref{sec:fig8}, where it makes the trivial part
of the target cohomologically trivial to round-off rather than to
discretization error.

Starting from $z=0$, conjugate gradients reach relative residual $10^{-10}$ in
six iterations, the reduced gradient falling from $5.368\times10^{-3}$ to
$5.01\times10^{-13}$. On the $N=16$ mesh the objective components are
$2.585448\times10^{-3}$ for the state tracking, $1.064311\times10^{-3}$ for
the co-derivative tracking and $1.322585\times10^{-5}$ for the
regularization, totalling $3.662985\times10^{-3}$. The optimal control is
small and interior, $\max_{\Omega}|z|=9.99\times10^{-3}$, so the bound
constraints are inactive. The two blocks of \eqref{eq:lshape-A0} are solved to
relative residuals $1.9\times10^{-15}$ and $2.9\times10^{-13}$ on every mesh.

\medskip
\noindent\textbf{Verification of the adjoint.}
The reduced objective is exactly quadratic, so
\begin{equation}\label{eq:taylor}
  \widehat J(z+\varepsilon d)-\widehat J(z)
  =\varepsilon\,\ip{\widehat g(z)}{d}_{M_{z}}
  +\tfrac{\varepsilon^{2}}{2}\ip{d}{\widehat Hd}_{M_{z}}
\end{equation}
holds with no remainder, and evaluating it where the gradient does not vanish
verifies the gradient and the Hessian action together, to round-off. At $z=0$
on the $N=16$ mesh the two sides agree to relative error $3.3\times10^{-15}$
at $\varepsilon=10^{-2}$, degrading to $1.7\times10^{-13}$ and
$8.9\times10^{-11}$ at $10^{-3}$ and $10^{-4}$. The degradation is the
cancellation in differencing two nearby values of $\widehat J$, whose absolute
error is fixed while the difference scales with $\varepsilon$, so
\eqref{eq:taylor} is read at the largest $\varepsilon$; this reverses the
convention for a truncated difference quotient. At the optimum the first term
vanishes and the identity still verifies the Hessian action, but no longer the
gradient.

The same evaluation exposes the reduced Hessian. For $d$ normalized in the
$M_{z}$ inner product, $\tfrac12\ip{d}{\widehat Hd}=0.5000184$ at $\alpha=1$,
so the compact term of \eqref{eq:reduced-hessian} contributes
$\ip{d}{K_{z}^{*}K_{z}d}=3.68\times10^{-5}$ in that direction. This is a
Rayleigh quotient in one direction and not a bound on $\norm{K_{z}^{*}K_{z}}$,
which the regularization sweep below shows to be two orders larger.

\medskip
\noindent\textbf{Convergence.}
Table~\ref{tab:lshape-refine} reports five meshes. The refinement factors are
not constant, so the order is extracted from
\begin{equation}\label{eq:order}
  \frac{d_{i}}{d_{i+1}}
  =\frac{N_{i}^{-q}-N_{i+1}^{-q}}{N_{i+1}^{-q}-N_{i+2}^{-q}},
  \qquad d_{i}=\widehat J_{N_{i+1}}-\widehat J_{N_{i}},
\end{equation}
which reduces to the familiar two-term formula only when $N_{i+1}/N_{i}$ is
fixed. The three consecutive triples give $q=1.92$, $1.96$ and $1.97$, an
estimate that rises and settles, and Richardson extrapolation at $q=1.97$
gives $\widehat J_{\infty}=3.6707\times10^{-3}$. The seven digits carried by
$\widehat J$ limit these orders to two decimals. The conjugate-gradient count
is six on every mesh, so the reduced solver is mesh independent at fixed
regularization, the spectrum of $K_{z}^{*}K_{z}$ being a property of the
continuous problem.

The L-shaped domain has three reentrant edges of opening $3\pi/2$, and second
order may seem surprising in their presence. The evidence that no singular
mode is active is the order sequence itself, which rises toward $2$ rather
than falling toward the $4/3$ that an $r^{-1/3}$ contaminant would impose. A
tube probe is consistent with this: measuring the energy of the optimal state
in a tube of radius $R$ about those edges, over radii from $1.2h$ to $4h$ at
$N=20$, gives $E(R)\sim R^{2.087}$, against $R^{2}$ for a bounded field and
$R^{4/3}$ for the singular exponent $\lambda=\pi/\omega=\tfrac23$. The probe
radii lie within a few mesh widths of the edge, so it corroborates the order
sequence rather than replacing it. Section~\ref{sec:fig8}, where the harmonic
field is fixed by the topology and has no such freedom, attains the singular
rate on the same probe.

\begin{table}[htbp]
\centering
\begin{tabular}{rrrlcl}
\toprule
$N$ & cells & dofs $(\sigma,u)$ & $\widehat J$ & CG
    & $\max_{\Omega}|z|$\\
\midrule
 $8$ &  $2\,688$ &   $4\,401$ & $3.640814\times10^{-3}$ & $6$
     & $9.198\times10^{-3}$\\
$12$ &  $9\,072$ &  $13\,897$ & $3.657110\times10^{-3}$ & $6$
     & $9.800\times10^{-3}$\\
$16$ & $21\,504$ &  $31\,841$ & $3.662985\times10^{-3}$ & $6$
     & $9.995\times10^{-3}$\\
$20$ & $42\,000$ &  $60\,921$ & $3.665735\times10^{-3}$ & $6$
     & $1.007\times10^{-2}$\\
$24$ & $72\,576$ & $103\,825$ & $3.667237\times10^{-3}$ & $6$
     & $1.011\times10^{-2}$\\
\midrule
$\infty$ & & & $3.6707\times10^{-3}$ & &\\
\bottomrule
\end{tabular}
\caption{Refinement study on the L-shaped domain, with the Richardson limit at
the observed order $q=1.97$ from \eqref{eq:order}.}
\label{tab:lshape-refine}
\end{table}

\medskip
\noindent\textbf{Dependence on the regularization.}
The six iterations at $\alpha=1$ reflect a reduced Hessian dominated by its
regularization term. Fixing $N=16$ and weakening the regularization through
$\alpha=10^{0},10^{-1},\dots,10^{-6}$ gives counts $6$, $10$, $21$, $47$,
$115$, $293$, $819$ at tolerance $10^{-10}$. The growth per decade is not
constant: the successive values of $\log_{10}(n_{i+1}/n_{i})$ are $0.22$,
$0.32$, $0.35$, $0.39$, $0.41$ and $0.45$, so a single power law fitted across
the range, which returns $-0.36$, averages two regimes and is not a rate.

The behaviour is that of eigenvalue counting rather than of the worst-case
bound. Conjugate gradients resolve only the eigenvalues of $K_{z}^{*}K_{z}$
exceeding $\alpha$, so for an algebraically decaying spectrum
$\lambda_{j}\sim Cj^{-s}$ the count grows like $\alpha^{-1/s}$, giving a
per-decade exponent $1/s$. The observed limit $0.45$ corresponds to
$s\approx2.2$. The worst-case bound
$n\lesssim\tfrac12\kappa^{1/2}\log(2/\mathrm{tol})$ with
$\kappa=1+\norm{K_{z}}^{2}/\alpha$ is consistent with the data but is not
sharp anywhere in the range, and it is loosest at small $\alpha$: inverting it
at each value gives implied $\norm{K_{z}}^{2}$ of $2.1\times10^{-2}$,
$1.5\times10^{-2}$, $9.3\times10^{-3}$, $6.1\times10^{-3}$ and
$4.8\times10^{-3}$ as $\alpha$ falls from $10^{-2}$ to $10^{-6}$. That this
sequence decreases shows that the $\sqrt{\kappa}$ regime is not being
approached, and that the shallow growth at moderate $\alpha$ is the decay of
the compact spectrum and not a preasymptotic artefact.

\subsection{Solid torus: one circulation mode}\label{sec:torus}

For the solid torus of Section~\ref{ex:torus} we have $b_{1}=1$, and we solve
\eqref{eq:pdeconlaplace}--\eqref{eq:pdeconlaplace:constraints} with a scalar
topological actuator, in the discretization of Section~\ref{sec:lshape}.

Three things become visible that were absent on a contractible domain. The
harmonic border is required for the discrete problem to be well posed, and the
requirement is fixed by the topology alone. The compatibility multiplier $p$
is inert under control, as Proposition~\ref{prop:p-inert} asserts. And the
physical harmonic state $h$ responds to the objective, including when the
objective contains no period target.

\medskip
\noindent\textbf{The mesh.}
The domain is not a union of background cells, so the mesh must be
constructed. Carving, in which a background cell is retained when its centroid
lies in $\Omega$, is not adequate for a refinement study: the staircase
boundary has a volume that oscillates about the true volume as the resolution
varies, and on the sequence $N=16,20,24$ the discrete cross-sectional area
took the values $0.9753$, $1.0095$ and $0.9982$ in units of $\pi\rho^{2}$,
changing sign twice, while the objective rose and then fell. This is the
geometric variational crime of \cite{HolstStern2012a}, a property of the
domain approximation and not of the solver, whose residuals stayed at
$10^{-15}$ throughout.

We therefore sweep a triangulated cross-sectional disk around the azimuth.
The disk is discretized with $n_{r}$ concentric rings, ring $j$ carrying $6j$
points, so its boundary is a regular $n_{c}$-gon with $n_{c}=6n_{r}$; this
section is placed at $n_{\varphi}$ equally spaced angles, and each prism is
split into three tetrahedra by a rule depending only on the sorted vertex
indices of its base triangle, which is the same in every layer and therefore
conforming. Every boundary vertex lies exactly on the torus surface, so with
$\lambda(n)=\frac{n}{2\pi}\sin\frac{2\pi}{n}$ the area ratio of an inscribed
regular $n$-gon,
\begin{equation}\label{eq:torus-volume}
  \frac{|\Omega_{h}|}{|\Omega|}
  =\lambda(n_{c})\,\lambda(n_{\varphi})+O(h^{4}),
  \qquad |\Omega|=2\pi^{2}R\rho^{2}.
\end{equation}
At $n_{r}=2,3,4$, with $1800$, $6156$ and $14400$ cells and $n_{\varphi}$
chosen for isotropy, the computed deficits are $5.5092\%$, $2.4643\%$ and
$1.3984\%$, matching \eqref{eq:torus-volume} to three decimals; the azimuthal
factor carries a steady nineteen percent of each. Both factors are $O(h^{2})$
and both are deficits, so the discrete domain approaches $\Omega$ from inside
and the geometric error cancels in differences. The computation of
Section~\ref{sec:betti} returns $\chi=0$ with one boundary surface and all
stars connected at every level, giving $b_{1}=1$.

\medskip
\noindent\textbf{The harmonic generator.}
Let $\theta_{A}$ and $\theta_{B}$ be the branches of the azimuth cut at
$\varphi=\pi$ and $\varphi=0$, interpolated as $\mathcal{P}_{1}\Lambda^{0}$
functions. The gradient of a $\mathcal{P}_{1}$ function has continuous
tangential trace, so its N\'ed\'elec interpolant is exact and each degree of
freedom is exactly the increment along that edge. Selecting, per degree of
freedom, the branch whose increment does not exceed $\pi$ in modulus therefore
yields the cochain $h^{0}$ of wrapped azimuthal increments with no appeal to
the orientation convention of the element. Since the axis lies in the hole, no
triangle encloses it, the three wrapped increments around any triangle sum to
zero, and $h^{0}$ is a discrete gradient on each tetrahedron. Its curl
vanishes elementwise, not to a projection tolerance: at $n_{r}=4$ the
quadratic form $\ip{\curl h^{0}}{\curl h^{0}}$ evaluates to
$-3.9\times10^{-14}$, negative and hence zero to round-off. Its circulation
around a loop threading the hole is $2\pi$, so it represents the nontrivial
class. A Helmholtz solve removes the exact part, reducing
$\max_{\tau_{h}}|\ip{h}{\grad\tau_{h}}|/\norm{\grad\tau_{h}}$ from
$2.6\times10^{-3}$ to $1.2\times10^{-16}$. Normalizing to unit period gives
the discrete generator $h_{1,h}$.

\medskip
\noindent\textbf{The period map.}
By Corollary~\ref{cor:period-derham} the period at degree one is the $L^{2}$
pairing against any divergence-free field tangent to $\partial\Omega$ with
unit flux through a cross-section, and on the solid torus
$J=\hat e_{\varphi}/(\pi\rho^{2})$ serves. No mesh-resolved cycle is required.

For the discrete map to be a functional on cohomology it must annihilate
discrete gradients exactly, which is the discrete coclosedness hypothesis of
Proposition~\ref{prop:discrete-period}. The assembled vector does not satisfy
it, since the discrete boundary is not a surface to which the smooth $J$ is
tangent, and the period of a discrete gradient of unit norm is
$2.7\times10^{-3}$; left uncorrected this displaces the normalization by about
one percent. With $D_{0}$ the vertex-to-edge incidence, so that
$G=M_{u}D_{0}$, the requirement is $G^{\top}M_{u}^{-1}b_{J}=0$, and we enforce
it by solving $S\chi=G^{\top}M_{u}^{-1}b_{J}$ with $S=D_{0}^{\top}M_{u}D_{0}$
and replacing $b_{J}$ by $b_{J}-G\chi$. After this projection the leak is
$1.3\times10^{-18}$, and the Helmholtz projection of the generator changes the
period by exactly zero, which is the invariance of the functional under a
change of representative asserted in Proposition~\ref{prop:discrete-period}.
The defect $\norm{\Piop_{h}H_{h}-I}$ vanishes by construction at rank one.

\medskip
\noindent\textbf{Analytic reference.}
The field $\hat e_{\varphi}/r$ is curl-free and divergence-free on $\Omega$
and tangent to $\partial\Omega$, so it lies in $\HH^{1}$; since that space is
one-dimensional it spans it. Its circulation about a loop threading the hole
is $2\pi$, so the period-normalized generator is
\begin{equation}\label{eq:torus-h-exact}
  h_{1}=\frac{\hat e_{\varphi}}{2\pi r},\qquad \Piop h_{1}=1 .
\end{equation}
This is the harmonic form and not an approximation to it. Hence
$m:=\norm{h_{1}}^{2}$, which enters the optimality law, is known exactly:
using $\int_{0}^{2\pi}d\phi/(R+s\cos\phi)=2\pi/\sqrt{R^{2}-s^{2}}$ in the
torus coordinates $r=R+s\cos\phi$,
\begin{equation}\label{eq:m-exact}
  m=\frac{1}{4\pi^{2}}\int_{\Omega}\frac{dV}{r^{2}}
   =R-\sqrt{R^{2}-\rho^{2}}
   =4.019237886\times10^{-2}.
\end{equation}
We write $m_{h}=\norm{h_{1,h}}^{2}$ for its discrete counterpart.

\medskip
\noindent\textbf{The bordered system.}
The discrete state system carries a rank-one harmonic border,
\begin{equation}
  A\begin{pmatrix}\sigma\\u\\p\end{pmatrix}
  =\begin{pmatrix}0\\Cz\\0\end{pmatrix},
  \qquad
  A=\begin{pmatrix}
      M_{\sigma}&-G^{\top}&0\\ G&K&b_{h}\\ 0&b_{h}^{\top}&0
    \end{pmatrix},
  \qquad (b_{h})_{i}=\ip{h_{1,h}}{\psi_{i}},
\end{equation}
the last row enforcing $\ip{u}{h_{1,h}}=0$. Deleting the border leaves
$(0,h_{1,h})$ in the nullspace. At $n_{r}=2$, where a dense computation is
affordable, $\operatorname{cond}(A)=7.5\times10^{5}$, the ordinary
conditioning of a mixed saddle operator, while
$\operatorname{cond}(A_{0})=5.1\times10^{17}$ with smallest singular value
$4.4\times10^{-16}$ and numerical nullity one. The gap is twelve orders, and
the nullity agrees with the Betti number obtained combinatorially, so
$\dim\HH^{1}_{h}$ is determined twice by independent routes. The border is
required for well-posedness, and the requirement is set by the topology and
not by the objective.

\medskip
\noindent\textbf{Results.}
We take $w_{y}=w_{\sigma}=\alpha=\alpha_{\rm top}=1$, $g=1$, $c_{0}=0$, the
scalar target $r_{d}$ of \eqref{eq:lshape-targets}, and
\[
  y_{d}=\grad\psi+\gamma\,\frac{\hat e_{\varphi}}{r},
\]
with $\psi$ as in \eqref{eq:psi}, so that the gradient part reproduces the
state target of Section~\ref{sec:lshape}. That part is cohomologically
trivial, and exactly so rather than to discretization error: interpolating
$\grad\psi$ into $\mathcal{P}^{-}_{1}\Lambda^{1}$ gives the discrete gradient
of the $\mathcal{P}_{1}$ interpolant of $\psi$, which the period functional
annihilates by construction, and the computed $d_{1}$ at $\gamma=0$ is
$2.3\times10^{-18}$. The second term supplies a class, and by
\eqref{eq:torus-h-exact} its content is $d_{1}=2\pi\gamma m$ up to the
geometric error. Table~\ref{tab:torus-runs} reports three configurations at
$n_{r}=4$, a mesh of $14\,400$ cells with $3\,050$ scalar and $18\,650$ vector
state degrees of freedom. In each of them Proposition~\ref{prop:balance} is
reproduced to relative error $1.1\times10^{-16}$ or better against the
computed $m_{h}$.

\begin{table}[htbp]
\centering
\begin{tabular}{clllll}
\toprule
 & $\gamma$ & $w_{\Pi}$ & $d_{1}$ & $c$ & $\widehat J$\\
\midrule
A & $0$   & $1$ & $2.3\times10^{-18}$ & $0.14698568$
  & $2.358198\times10^{-2}$\\
B & $0.2$ & $1$ & $5.086\times10^{-2}$ & $0.17190676$
  & $4.701849\times10^{-2}$\\
C & $0.2$ & $0$ & $5.086\times10^{-2}$ & $0.04886029$
  & $3.093384\times10^{-2}$\\
\bottomrule
\end{tabular}
\caption{Three configurations on the solid torus at $n_{r}=4$ with $O=I$ and
$\pi_{d}=0.30$ where present. Conjugate gradients converge in four iterations
throughout.}
\label{tab:torus-runs}
\end{table}

Configuration~C is the substantive one, and it is the only place in this
section where a claim of the formulation is tested rather than restated. The
objective contains no period target, no reference to topology and no term in
$p$, yet the optimal state carries circulation $0.04886029$ about the hole,
matching $w_{y}d_{1}/(\alpha_{\rm top}+w_{y}m_{h})$ to sixteen digits. The
tracking term asks the physical state to resemble a field carrying
circulation, the gauge-fixed component $u$ is constrained orthogonal to
$\HH^{1}$ and can supply none, and only $h$ can respond. This is what the
splitting \eqref{eq:intro-split} is for, and it is unavailable in a
formulation placing the harmonic slot at $p$, where by
Proposition~\ref{prop:p-inert} the harmonic content of a target is unreachable
for every positive $\alpha$. Comparing A with B isolates the same effect
against a period target, the circulation rising by $16.95\%$ when the target
acquires its class. In all three runs $f=0$ and the computed $p$ is of order
$10^{-16}$, including in configuration~C, which drives $h$ to a value of order
$10^{-1}$.

A fourth configuration exercises the coupling. Restricting the observation to
the half of the tube with $x<0.5$ gives $\kappa=5.735\times10^{-5}$ against
$10^{-22}$ under $O=I$, and Proposition~\ref{prop:balance} reproduces
$c=0.16103617$ to relative error $1.7\times10^{-16}$; the control grows from
$\max_{\Omega}|z|=2.9\times10^{-3}$ to $5.4\times10^{-2}$, since the
unobserved half may be sacrificed. The circulation moves by six percent
relative to configuration~B, so the observation window changes the topological
state appreciably. What remains small is the coupling itself: $\kappa$ is the
inner product of $Ou$ with $Oh_{1,h}$, and the plane $x=\tfrac12$ is a
symmetry plane of the tube, so restricting to one half nearly halves the
vanishing $\ip{u}{h_{1,h}}$ instead of exposing a generic misalignment. The
uncoupled law, which sets $\kappa=0$, remains accurate to $0.018\%$ for that
reason and not because the formulation decouples. A window that is not a
symmetry half would give a $\kappa$ of the order of the other terms.

\medskip
\noindent\textbf{Convergence.}
Table~\ref{tab:torus-refine} reports the refinement study. The period and the
harmonic norm converge to $\Piop h_{1}=1$ and \eqref{eq:m-exact} at second
order, inheriting their error from the volume deficit. The objective is
monotone with observed order $q=1.86$ and Richardson limit
$4.7299\times10^{-2}$, and the optimal circulation converges to $0.171801$ by
the same extrapolation. The balance law holds to round-off and the
compatibility multiplier stays at $10^{-16}$ at every level, neither depending
on the geometry, and the conjugate-gradient count is four throughout, so the
rank-one border costs nothing in the reduced solver.

\begin{table}[htbp]
\centering
\begin{tabular}{rrllllr}
\toprule
$n_{r}$ & cells & $\widehat J$ & $c$ & $m_{h}$
 & $|m_{h}/m-1|$ & $|\Piop(h)/2\pi-1|$\\
\midrule
$2$ & $1\,800$  & $4.627993\times10^{-2}$ & $0.17221344$
    & $4.356979\times10^{-2}$ & $8.40\times10^{-2}$
    & $5.01\times10^{-2}$\\
$3$ & $6\,156$  & $4.681984\times10^{-2}$ & $0.17198139$
    & $4.165250\times10^{-2}$ & $3.63\times10^{-2}$
    & $2.24\times10^{-2}$\\
$4$ & $14\,400$ & $4.701849\times10^{-2}$ & $0.17190676$
    & $4.101517\times10^{-2}$ & $2.05\times10^{-2}$
    & $1.27\times10^{-2}$\\
\midrule
\multicolumn{2}{l}{order} & $1.86$ & $1.86$ & & $2.07,\ 1.99$
    & $1.98,\ 1.98$\\
\multicolumn{2}{l}{limit} & $4.7299\times10^{-2}$ & $0.171801$
    & $4.019238\times10^{-2}$ & &\\
\bottomrule
\end{tabular}
\caption{Refinement on the boundary-fitted torus, configuration~B. The limit
for $m_{h}$ is the exact value \eqref{eq:m-exact}; those for $\widehat J$ and
$c$ are Richardson extrapolations at $q=1.86$.}
\label{tab:torus-refine}
\end{table}

The limit for $c$ is not merely an extrapolation. Setting $O=I$,
$w_{y}=w_{\Pi}=\alpha_{\rm top}=1$, $g=1$ and $c_{0}=0$ in
Proposition~\ref{prop:balance}, and using $d_{1}=2\pi\gamma m$ with the exact
value \eqref{eq:m-exact}, the optimality law predicts
\[
  c^{\star}=\frac{2\pi\gamma\,m+\pi_{d}}{2+m}
  =\frac{0.05050724+0.30}{2.04019238}
  =0.1718011 ,
\]
which the Richardson limit reproduces to seven digits. The balance law holds
to round-off on each mesh against the computed $m_{h}$, so the entire
discretization error in $c$ is inherited from $m_{h}\to m$, and the agreement
above confirms that inheritance at the extrapolated limit.

The solid torus also exhibits $\dim\HH^{k}=b_{k}$ across two degrees on one
mesh. Since $b_{2}=0$, the degree-two div--div problem here carries no
harmonic block: with $\sigma$ in $\mathcal{P}^{-}_{1}\Lambda^{1}$ and $u$ in
the Raviart--Thomas space $\mathcal{P}^{-}_{1}\Lambda^{2}$, the unbordered
state operator is nonsingular and solves to machine precision. The border thus
has rank one at degree one and rank zero at degree two on the same
triangulation. The spherical shell of Section~\ref{sec:shell} is the mirror
image.

\subsection{Figure eight: two circulation modes}\label{sec:fig8}

On the slab with two square holes of Section~\ref{ex:dtorus} we have
$b_{1}=2$, and we solve
\eqref{eq:pdeconlaplace}--\eqref{eq:pdeconlaplace:constraints} with a
two-dimensional topological actuator. The finite element pair, the control
space and the reduced solver are those of Section~\ref{sec:lshape}, with
$B=I$, $O=R=I$ and $f=0$; the harmonic border is that of
Section~\ref{sec:torus}, now of rank two. The mesh is carved from a uniform
triangulation of the slab with $N$ cells across each horizontal direction and
$N/4$ through the thickness, and since every corner of $\Omega$ is a multiple
of $\tfrac18$, the carve is exact whenever $N$ is a multiple of eight.

Three things appear that rank one cannot show. The period matrix has
off-diagonal entries that no normalization forces. The harmonic basis is not
orthogonal, so the balance law is a linear system coupling the two classes.
And the actuator matrix can be rank deficient, which is the only setting in
which Proposition~\ref{prop:reachability} says anything.

\medskip
\noindent\textbf{Topology.}
The computation of Section~\ref{sec:betti} returns, at $N=16$, $n_{V}=1355$,
$n_{E}=7564$, $n_{F}=11584$ and $n_{T}=5376$, hence $\chi=-1$ with one
connected component and one boundary surface, giving $b_{1}=2$. The value
$\chi=-1$ against $0$ for the solid torus and $1$ for the L-shaped domain is
the Euler characteristic counting tunnels. The manifold test matters more here
than on the previous two domains: two lobes joined at a single vertex satisfy
the facet condition without being a manifold, so Alexander duality would not
apply, and the same configuration is fatal for the discretization, since
N\'ed\'elec degrees of freedom live on edges and a vertex-connected mesh
decouples the discrete $H(\curl)$ spaces of the two lobes. All vertex and edge
stars are connected at every refinement level.

\medskip
\noindent\textbf{Two generators.}
The construction of Section~\ref{sec:torus} is applied once per hole, giving
cochains $h^{0}_{i}$ of wrapped increments of the azimuth $\theta_{i}$ about
the axis through $(c_{i},\tfrac12)$. Each axis lies in the hole of its own
square and outside the material of the other, so both are discretely closed on
all of $\Omega$: the quadratic forms $\ip{\curl h^{0}_{i}}{\curl h^{0}_{i}}$
are $8.6\times10^{-13}$ and $8.0\times10^{-13}$ at $N=16$, zero to round-off.
Helmholtz projection leaves
$\max_{\tau_{h}}|\ip{h_{i}}{\grad\tau_{h}}|/\norm{\grad\tau_{h}}$ at
$3.2\times10^{-14}$ and $7.4\times10^{-15}$.

\medskip
\noindent\textbf{An exact period map.}
On the torus the period functional was built from a smooth divergence-free
field whose tangency to the discrete boundary was only approximate, and it
carried a geometric error of order $10^{-2}$ before projection. On an extruded
domain it can be made exact. Taking $a_{i}$ discretely harmonic, equal to one
on the lateral boundary of hole $i$ and zero on the other hole and the outer
boundary, the field $J_{i}=\curl(a_{i}\hat e_{z})$ satisfies the hypotheses of
Corollary~\ref{cor:period-derham} at the discrete level as well, and
\[
  \Piop_{i}(v)=\tfrac1H\ip{v}{J_{i}}
\]
is the circulation about hole $i$ with the loops oriented by
$t=\hat e_{z}\times n$. As on the torus the assembled functional must be
Leray-projected before it satisfies the discrete coclosedness hypothesis of
Proposition~\ref{prop:discrete-period}; after projection the period of a
discrete gradient of unit norm is $2.1\times10^{-16}$ and
$1.5\times10^{-16}$, against $10^{-2}$ before.

Two checks confirm the construction. The raw period matrix of the two
generators, before normalization and in units of $2\pi$, is the identity to
six decimals, with off-diagonals below $5\times10^{-7}$ and condition number
$1.0000$, so $\theta_{1}$ winds once about hole one and not at all about hole
two. Nothing forces this, since at rank two the off-diagonals are not fixed by
the normalization. Second, replacing the harmonic potentials $a_{i}$ by
solutions of $(-\Delta+25)a_{i}=0$ with the same boundary constants, which are
entirely different functions, leaves the matrix unchanged to
$2.5\times10^{-16}$. This is the invariance of the functional under a change
of representative asserted in Proposition~\ref{prop:discrete-period}, and it
is the reason harmonicity of $a_{i}$ is not among the hypotheses. After
normalization $\norm{\Piop_{h}H_{h}-I}=2.5\times10^{-16}$.

\medskip
\noindent\textbf{A non-orthogonal basis and the balance law.}
With the period-normalized basis the Gram matrix at $N=16$ is
\[
  M=\bigl[\ip{h_{i}}{h_{j}}\bigr]
  =\begin{pmatrix}
     3.960250\times10^{-2} & 8.023\times10^{-3}\\
     8.023\times10^{-3} & 3.960250\times10^{-2}
   \end{pmatrix},
\]
symmetric with equal diagonals as the reflection $x\mapsto1-x$ requires, and
with an off-diagonal that is $20.3\%$ of the diagonal: on a slab each harmonic
field spreads across the cross-section instead of localizing near its own
hole. The basis cannot be orthogonalized without destroying the period
normalization, and it is the period coordinates that carry the physical
meaning. Proposition~\ref{prop:balance} therefore appears here in its full
form, with $O=I$ and $c=Ga+c_{0}$,
\begin{equation}\label{eq:fig8-balance}
  \Bigl[\alpha_{\rm top}I+G^{\top}\bigl(w_{y}M+w_{\Pi}I\bigr)G\Bigr]a
  =G^{\top}\Bigl(w_{y}d+w_{\Pi}\pi_{d}
      -\bigl(w_{y}M+w_{\Pi}I\bigr)c_{0}\Bigr),
  \qquad d_{i}=\ip{y_{d}}{h_{i}} .
\end{equation}
A componentwise reduction is valid only for an $L^{2}$-orthogonal harmonic
basis, which a geometry of two well-separated lobes happens to supply; here it
gives $(0.166544,-0.117516)$ against the correct $(0.167010,-0.118173)$,
errors of $0.28\%$ and $0.56\%$.

\medskip
\noindent\textbf{Reachability.}
All weights are one, $c_{0}=0$, and
$y_{d}=\grad\psi+\gamma\hat e_{\varphi,1}/r_{1}-\gamma\hat e_{\varphi,2}/r_{2}$
with $\gamma=0.2$, so the target is antisymmetric under $x\mapsto1-x$ and
$d=(0.039686,-0.039686)$ at $N=16$. The period content of that target is
recovered exactly: $M^{-1}d$ equals
$(2\pi\gamma,-2\pi\gamma)=(1.256637,-1.256637)$ to six digits, so the harmonic
projection preserves the periods of the two circulating fields, which is the
property required of a functional on cohomology and a sharper check than
antisymmetry alone. Table~\ref{tab:fig8-runs} reports four configurations;
\eqref{eq:fig8-balance} reproduces every computed $c$ to relative error
$6.4\times10^{-15}$ or better.

\begin{table}[htbp]
\centering
\begin{tabular}{clllll}
\toprule
 & $G$ & $\pi_{d}$ & $a$ & $c$ & $\widehat J$\\
\midrule
A & $I_{2}$ & $(0.30,-0.20)$ & $(0.167010,-0.118173)$
  & $(0.167010,-0.118173)$ & $8.83937\times10^{-2}$\\
B & $(1,1)^{\top}$ & $(0.30,-0.20)$ & $0.0323076$
  & $(0.032308,\ 0.032308)$ & $1.29306\times10^{-1}$\\
C & $(1,-1)^{\top}$ & $(0.25,-0.25)$ & $0.1891422$
  & $(0.189142,-0.189142)$ & $7.36296\times10^{-2}$\\
D & $(1,1)^{\top}$ & $(0.25,-0.25)$ & $4.30\times10^{-9}$
  & $(4.3\times10^{-9},\ 4.3\times10^{-9})$
  & $1.28421\times10^{-1}$\\
\bottomrule
\end{tabular}
\caption{Four actuator configurations at $N=16$ with $O=I$. Runs C and D share
an actuator rank and a target, differing only in the alignment of
$\range(G)$.}
\label{tab:fig8-runs}
\end{table}

Run~D is the sharpest form of Proposition~\ref{prop:reachability}. Both $d$
and $\pi_{d}$ are antisymmetric while $G=(1,1)^{\top}$ spans the symmetric
direction, so the right-hand side of \eqref{eq:fig8-balance} vanishes in the
continuum and the optimal topological control is zero: nothing the actuator
can do improves the objective, and the period misfit $6.250\times10^{-2}$ is
the full cost of the target. The computed $4.30\times10^{-9}$ is stable to
seven digits as the conjugate-gradient tolerance is tightened from $10^{-10}$
to $10^{-14}$, so it is a genuine stationary point of the discrete problem. It
is nonzero because the discrete antisymmetry of the target is not exact: the
residual $d_{1}+d_{2}$ is $1.33\times10^{-8}$ at $N=16$ and converges at order
$4.2$, while the Gram defect $M_{11}-M_{22}$ stays at $10^{-17}$, so the mesh
respects the reflection exactly and the defect lies in the interpolation of
the target, at a rate faster than anything else in the problem. Dividing it by
$\alpha_{\rm top}+G^{\top}(w_{y}M+w_{\Pi}I)G=3.09525$ reproduces the computed
$a$ to six digits.

Comparing C with D, an actuator of the same rank recovers $76\%$ of the target
in one case and nothing in the other. Comparing B with D separates rank from
alignment: writing $\pi_{d}=(0.30,-0.20)$ as $0.05\,(1,1)+0.25\,(1,-1)$ in
run~B, the achieved $0.0323$ is $65\%$ of the reachable component while the
unreachable component is untouched. We note also that
$\max_{\Omega}|z|=1.045716\times10^{-1}$ in all four runs, agreeing to seven
digits: with $O=I$ the reduced Hessian \eqref{eq:reduced-hessian} is block
diagonal and the distributed control is blind to $G$, $\pi_{d}$ and $w_{\Pi}$.
The compatibility multiplier is of order $10^{-15}$ throughout, as
Proposition~\ref{prop:p-inert} requires, and the state is orthogonal to both
harmonic directions to $5\times10^{-16}$.

\medskip
\noindent\textbf{Convergence and the reentrant corners.}
Table~\ref{tab:fig8-refine} reports configuration~A on four meshes, with the
geometry exact at every level. Five quantities converge at the same rate:
$M_{11}$ at $1.31$, $M_{12}$ at $1.18$, $c_{1}$ and $c_{2}$ at $1.33$, and
$d_{1}$ at $1.33$. The value is $4/3$ and it is set by the geometry. Each hole
has four corners of interior angle $3\pi/2$, so the singular exponent is
$\lambda=\pi/\omega=\tfrac23$; the harmonic potential behaves like
$r^{\lambda}$ there and the harmonic $1$-form, being its gradient, like
$r^{-1/3}$, square integrable and not in $H^{1}$, which is precisely the
regularity for which the mixed formulation exists. Quantities quadratic in the
field, which is what $M$, $d$ and through \eqref{eq:fig8-balance} the period
coordinates are, converge at $O(h^{2\lambda})=O(h^{4/3})$. The exponent is
confirmed directly: the energy of $h_{1}$ inside a cylinder of radius $R$
about a corner edge, over radii from $0.64h$ to $2.9h$ on the $N=32$ mesh,
fits $E(R)\sim R^{1.330}$ against $R^{2}$ for a bounded field, giving a field
exponent of $-0.335$.

\begin{table}[htbp]
\centering
\begin{tabular}{rrlllr}
\toprule
$N$ & cells & $\widehat J$ & $c_{1}$ & $M_{11}$ & CG\\
\midrule
 $8$ &    $672$ & $8.947738\times10^{-2}$ & $0.168091$
     & $4.1854\times10^{-2}$ & $7$\\
$16$ & $5\,376$ & $8.839372\times10^{-2}$ & $0.167010$
     & $3.9602\times10^{-2}$ & $7$\\
$24$ & $18\,144$ & $8.810166\times10^{-2}$ & $0.166712$
     & $3.8980\times10^{-2}$ & $7$\\
$32$ & $43\,008$ & $8.801430\times10^{-2}$ & $0.166580$
     & $3.8701\times10^{-2}$ & $7$\\
\midrule
\multicolumn{2}{l}{order} & --- & $1.33$ & $1.31$ & \\
\multicolumn{2}{l}{limit} & --- & $0.166297$ & $3.8089\times10^{-2}$ & \\
\bottomrule
\end{tabular}
\caption{Refinement on the figure-eight domain, configuration~A. Each limit is
extrapolated at its own observed order, $1.33$ for $c_{1}$ and $1.31$ for
$M_{11}$. No order is quoted for $\widehat J$; see the text.}
\label{tab:fig8-refine}
\end{table}

The contrast with Section~\ref{sec:lshape} is worth drawing, since that domain
has three reentrant edges of the same opening and converges at order $1.97$. A
reentrant feature does not by itself produce a singular field; it produces one
when the field is compelled to admit the singular mode. On the L-shaped domain
nothing compels it, the state solving the mixed system with the natural
condition $u\cdot n=0$ under smooth data, and the same energy probe applied
there returns $E(R)\sim R^{2.087}$. Here the harmonic field is not driven by
data at all: it is fixed by the cohomology, and the harmonic $1$-form of a
planar domain with a reentrant corner is the gradient of a potential behaving
like $r^{\lambda}$, with no freedom in the matter. Topology forces a
singularity that boundary conditions and smooth data do not, and the
topological subsystem inherits its rate while the harmonic dimension, the
period matrix and the balance law remain exact throughout.

No order is quoted for the total objective. Its four components move in
opposite directions under refinement: the state tracking term falls through
$5.588$, $5.486$, $5.459$, $5.451$ in units of $10^{-2}$, the period misfit
rises through $1.196$, $1.219$, $1.226$, $1.229$ in the same units, and the
topological regularization $\tfrac{\alpha_{\rm top}}{2}|a|^{2}$, which is
$2.09\times10^{-2}$ at $N=16$ and larger than the period misfit, falls with
$c$. Partial cancellation between sequences of opposite sign corrupts any
fitted rate, successive triples returning $1.37$ and $2.43$.

\subsection{Spherical shell: cavity-flux control}
\label{sec:shell-numerics}



The preceding examples realize nontrivial topology at degree $k=1$,
where the topological observables are circulations along
noncontractible loops. We now take an example at degree $k=2$, to show
that the same control mechanism applies to fluxes through nontrivial
two-cycles. The domain is the spherical shell  described in \ref{ex:Ball}
\[
  \Omega=\bigl\{\,x\in\R^3:\ r_{\rm in}<|x-x_0|<r_{\rm out}\,\bigr\},
  \qquad x_0=(\tfrac12,\tfrac12,\tfrac12),
\]
with $b_0=1$, $b_1=0$, $b_2=1$, so that $\dim\HH^2=1$.

 
\noindent\textbf{Mesh.}
Carving the shell from a background triangulation gives a staircase
boundary whose volume oscillates under refinement, which destroys any
convergence statement. We build instead a boundary-fitted mesh, as for
the torus one dimension over: an icosahedron subdivided $n_{\rm sub}$
times and projected to the sphere, extruded radially through $n_r$
layers, each prism split into three tetrahedra by a rule keyed to the
sorted vertex indices of its base triangle. That rule is the same in
every layer, so the diagonal chosen on each shared quadrilateral face
agrees between neighbours and the mesh is conforming; both spherical
triangulations appear as unions of mesh faces. Every boundary vertex
lies exactly on one of the two spheres and the radial edges are exact
segments along rays, so the geometric error is the inscribed-polyhedron
error in the angular directions alone, independent of $n_r$: the volume
deficits at $n_{\rm sub}=1,\dots,4$ are $12.65\%$, $3.384\%$,
$0.861\%$ and $0.216\%$, with successive ratios $3.74$, $3.93$ and
$3.98$ approaching four. Figure~\ref{fig:shell} shows the mesh with one
octant removed. The computation of Section~\ref{sec:betti} returns
$\chi=2$ with one connected component, two boundary surfaces and all
vertex and edge stars connected, hence $b_0=1$, $b_1=0$, $b_2=1$ at
every level.
 
Refinement must act in both directions. The harmonic $2$-form
\eqref{eq:shell-h2-closed} varies only radially, by a factor
$(r_{\rm out}/r_{\rm in})^2=6.5$ across the wall, and subdividing the
sphere while advancing $n_r$ through $2,3,4$ leaves the radial spacing
sixty percent coarser than the angular one by the third level; the
observed order in $m_h$ then falls to $1.24$ and drifts. Taking $n_r$
proportional to $2^{n_{\rm sub}}$ restores second order, and the
sequence below does so.
 
\medskip
\noindent\textbf{The discrete generator.}
At $n_{\rm sub}=3$, $n_r=8$ the divergence falls from
$1.08\times10^1$ to $-7.6\times10^{-13}$ under the Leray projection,
and $\max_{\tau_h}|\ip{h}{\curl\tau_h}|$ from $1.2\times10^{-2}$ to
$1.2\times10^{-15}$ under the removal of the exact part, so
\eqref{eq:shell-harmonic-diagnostics} holds to round-off. The period of
a random discrete curl is $4.3\times10^{-17}$ after the functional is
projected, and $|\Piop_h h_{2,h}-1|$ is at round-off by construction.
 
The construction does not depend on the field it starts from, which is
the check that it reaches $\HH^2_h$ rather than merely approximating
it. Starting from $(x-x_0)/|x-x_0|^2$, or from the linear field
$x-x_0$ whose divergence is three, returns the same normalized
generator to $1.5\times10^{-14}$ and the same $m_h$ to ten digits: the
two projections land in the one-dimensional discrete harmonic space
exactly and the period normalization fixes the scale. The ratio
$\Piop(h)/4\pi$, the discrete flux of the projected radial field before
normalization, takes the values $1.004856$, $0.999338$ and $0.999692$
on the three meshes below.
 
\medskip
\noindent\textbf{Results.}
We take $w_y=w_\sigma=\alpha=\alpha_{\rm top}=1$, $g=1$, $c_0=0$, and
the target \eqref{eq:shell-manufactured-target} with
$u_d=\curl A$, $A=(0,0,\ 0.1\sin\pi x\,\sin\pi y)$. The computed
$d=\ip{y_d}{h_2}$ at $\gamma=0$ is $-2.19\times10^{-7}$. This is small
but not round-off, unlike the corresponding quantity on the torus:
there the trivial part was a gradient and the N\'ed\'elec interpolant
of a gradient is exactly the discrete gradient of the $\mathcal P_1$
interpolant, whereas here $\curl A$ is interpolated into
Raviart--Thomas directly rather than through the canonical N\'ed\'elec
interpolant of $A$, so a small component outside the range of the
discrete curl survives; it displaces the achieved flux in the sixth
digit. At $\gamma=0.3$ the computed $d$ is $8.192619\times10^{-2}$
against $\gamma m_h=8.195167\times10^{-2}$, agreeing to
$3.1\times10^{-4}$.
 
Table~\ref{tab:shell-runs} reports three configurations at
$n_{\rm sub}=3$, $n_r=8$: a mesh of $30\,720$ cells with $37\,776$
N\'ed\'elec and $62\,720$ Raviart--Thomas state degrees of freedom and
$92\,160$ control degrees of freedom. Equation
\eqref{eq:shell-exact-flux} is reproduced to relative error
$3.3\times10^{-16}$ or better in all three, conjugate gradients
converge in four iterations, the compatibility multiplier is of order
$10^{-15}$ as Proposition~\ref{prop:p-inert} requires, and the
orthogonality defect $|\ip{u_h}{h_{2,h}}|/\norm{u_h}$ is at most
$10^{-14}$.
 
\begin{table}[htbp]
\centering
\begin{tabular}{clllll}
\toprule
 & $\gamma$ & $w_\Pi$ & $d$ & $\Phi_h$ & $\widehat J$\\
\midrule
A & $0$   & $1$ & $-2.193\times10^{-7}$ & $0.13197406$
  & $3.531757\times10^{-2}$\\
B & $0.3$ & $1$ & $\phantom{-}8.192619\times10^{-2}$ & $0.16801463$
  & $3.531530\times10^{-2}$\\
C & $0.3$ & $0$ & $\phantom{-}8.192619\times10^{-2}$ & $0.06434808$
  & $1.976400\times10^{-2}$\\
\bottomrule
\end{tabular}
\caption{Three configurations on the spherical shell at
$n_{\rm sub}=3$, $n_r=8$, with $O=I$ and $\Phi_d=0.30$ where present.
In A the target carries no cohomology and the achieved flux is set by
$\Phi_d$ alone; in B the two drives compete; in C there is no flux
target at all.}
\label{tab:shell-runs}
\end{table}
 
Configuration C is the substantive one. The objective contains no flux
target, no reference to topology and no term in $p$, yet the optimal
state carries a cavity flux of $6.434808\times10^{-2}$, matching
$w_y d/(\alpha_{\rm top}+w_y m_h)$ to six digits. The tracking term
asks the physical state to resemble a field carrying flux, the
gauge-fixed component satisfies $u\perp\HH^2$ and can supply none, and
only $h$ can respond. By Proposition~\ref{prop:p-inert} this is
unreachable in a formulation that assigns the tracking target to $p$.
Comparing A with B isolates the same effect against a flux target, the
achieved flux rising by $27.31\%$ when the target acquires its
cohomology class. In all three runs
$\max_\Omega|z|=4.32\times10^{-3}$, agreeing to three digits between B
and C: with $O=I$ the reduced Hessian is block diagonal and the
distributed control is blind to $w_\Pi$ and $\Phi_d$.
 
\medskip
\noindent\textbf{Conditioning, and the degree contrast.}
Omitting the harmonic border leaves $(0,h_{2,h})$ in the nullspace of
the degree-two state operator. On the coarsest mesh, where a dense
computation is affordable, the bordered operator has condition number
$6.40\times10^5$ while the unbordered one has $9.56\times10^{18}$ with
smallest singular value $1.56\times10^{-15}$ and numerical nullity one,
a gap of thirteen orders; the nullity agrees with the Betti number
obtained combinatorially.
 
This yields the sharpest available statement of $\dim\HH^k=b_k$. On
this shell $b_1=0$, so the curl--curl problem needs no border at all
and its unbordered operator is nonsingular, while the div--div problem
needs a rank-one border. The solid torus of Section~\ref{sec:torus} is
the mirror image, carrying a border at degree one and none at degree
two. Between the two examples the harmonic content appears at exactly
one degree per domain, its dimension equal to the corresponding Betti
number in each case, and neither example alone can say this.

 \begin{figure}[htbp]
  \centering
  \includegraphics[width=\textwidth]{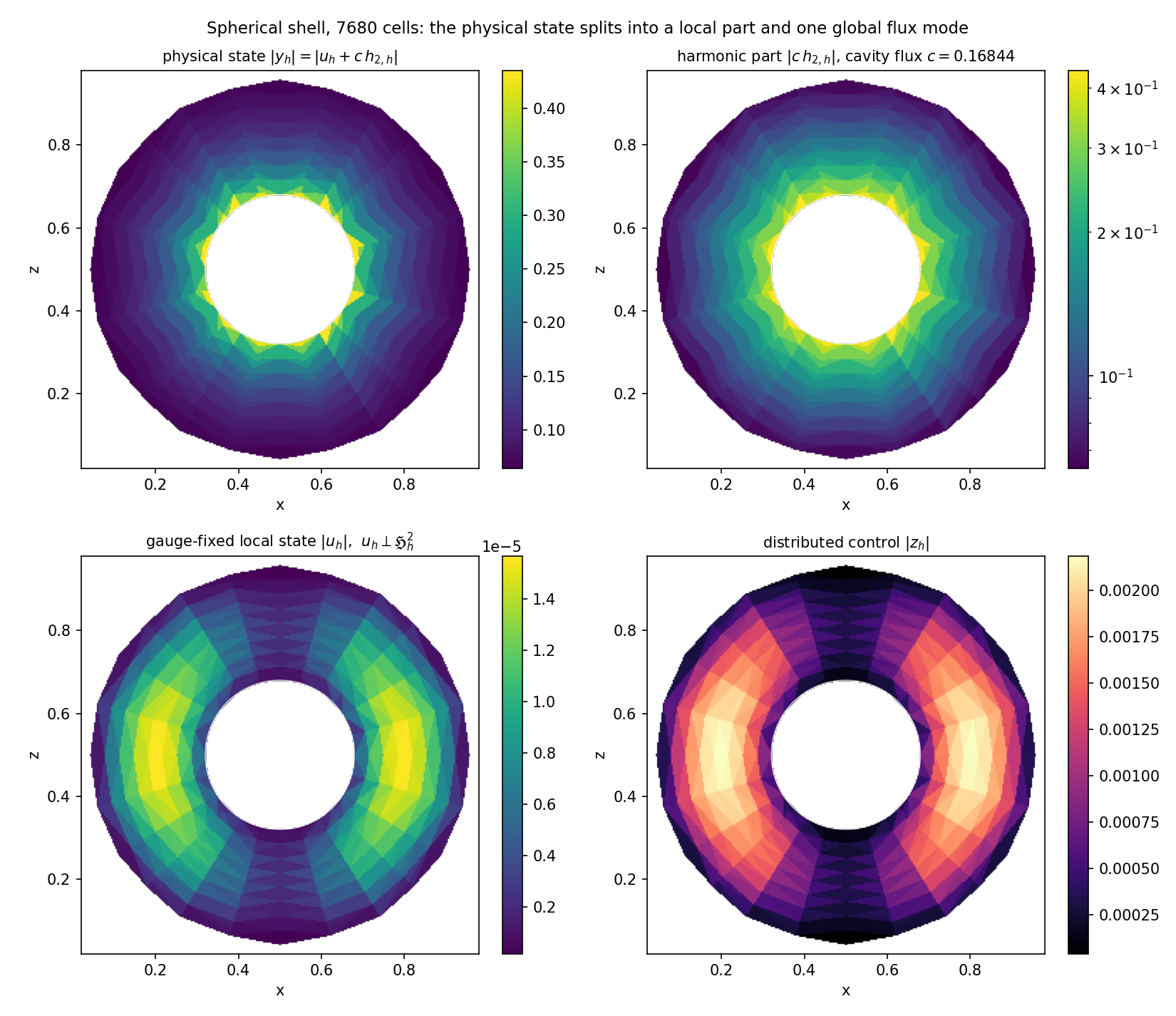}
  \caption{Spherical shell, $7\,680$ cells, meridional plane $y=1/2$.
  The physical state $y_h=u_h+c\,h_{2,h}$ is dominated by its harmonic
  component, which carries the entire cavity flux $c=0.16844$ and decays as
  $1/(4\pi r^2)$ across the wall; the gauge-fixed local state $u_h$, shown on
  its own scale, is four orders smaller because it is constrained orthogonal
  to $\mathfrak H^2_h$ and can supply no flux. The distributed control $z_h$
  acts only on $u_h$, so the two parts of the state respond to different
  controls.}
  \label{fig:shell-fields}
\end{figure}
\medskip
\noindent\textbf{Convergence.}
Table~\ref{tab:shell-refine} reports configuration B on three isotropic
levels. The harmonic norm converges to \eqref{eq:shell-m-closed} at
orders $2.05$ and $2.01$, and Richardson extrapolation at order two
gives $0.2690649$ against the exact $0.2691026$, a relative error of
$1.4\times10^{-4}$: the discrete harmonic space converges to the
continuous one at the expected rate, verified against an analytic value
rather than against itself. The achieved cavity flux converges at order
$2.16$ with limit $0.16781$. Equation \eqref{eq:shell-exact-flux} holds
to round-off against the computed $m_h$ at every level, the
compatibility multiplier stays at $10^{-15}$, and the
conjugate-gradient count does not grow, being $5$, $5$ and $4$. No
order is quoted for the objective, which mixes terms converging at
different rates as on the figure-eight domain of
Section~\ref{sec:fig8}; its single available triple gives $1.20$.
 
\begin{table}[htbp]
\centering
\begin{tabular}{rrrrllll}
\toprule
$n_{\rm sub}$ & $n_r$ & cells & dofs & $\widehat J$ & $\Phi_h$ & $m_h$
 & $|m_h/m-1|$\\
\midrule
$1$ & $2$ &    $480$ &   $1\,724$ & $3.495473\times10^{-2}$
    & $0.17183100$ & $3.369371\times10^{-1}$ & $2.52\times10^{-1}$\\
$2$ & $4$ &  $3\,840$ &  $12\,968$ & $3.520568\times10^{-2}$
    & $0.16871234$ & $2.854942\times10^{-1}$ & $6.09\times10^{-2}$\\
$3$ & $8$ & $30\,720$ & $100\,496$ & $3.531530\times10^{-2}$
    & $0.16801463$ & $2.731722\times10^{-1}$ & $1.51\times10^{-2}$\\
\midrule
\multicolumn{4}{l}{order} & --- & $2.16$ & $2.05,\ 2.01$ & \\
\multicolumn{4}{l}{limit} & --- & $0.16781$
    & $2.691026\times10^{-1}$ & \\
\bottomrule
\end{tabular}
\caption{Refinement on the boundary-fitted shell, configuration B, with
$n_r$ doubling at each level. The limit for $m_h$ is
\eqref{eq:shell-m-closed}; that for $\Phi_h$ is a Richardson
extrapolation.}
\label{tab:shell-refine}
\end{table}

\medskip
\noindent\textbf{Diagnostics.}
The cell and space counts are given with Table~\ref{tab:shell-runs};
$\dim\HH^2_h=1$ is obtained combinatorially from $\chi=2$ and
spectrally from the nullity of the unbordered operator; the harmonic
residuals \eqref{eq:shell-harmonic-diagnostics} are
$-7.6\times10^{-13}$, $1.2\times10^{-15}$ and round-off; the
orthogonality defect is $2.5\times10^{-15}$; the achieved flux is the
column $\Phi_h$; the agreement with \eqref{eq:shell-exact-flux} is
$3.3\times10^{-16}$; the reduced gradient and the Hessian action are
verified together in both control directions by the exact quadratic
Taylor identity ~\ref{eq:taylor}, to $3.3\times10^{-15}$;
and the reduced solver count is $5$, $5$, $4$ under refinement. We do
not report local state and adjoint errors: no exact solution of
\eqref{eq:shell-mixed} is available for this data, and the convergence
evidence is instead $m_h\to m$ and $\Phi_h$ against their limits.
 
The shell completes the pattern of the elliptic examples. On the solid
torus the nontrivial mode lies in $\HH^1$ and is a line circulation; on
the figure-eight domain there are two such modes; here it lies in
$\HH^2$ and is a surface flux. The same scalar optimality law governs
the torus and the shell, with only the value of $m$ changing, so the
formulation is tied to cohomology degree rather than to a particular
vector-calculus operator or geometric picture of a hole.


\section{Maxwell experiment I: re-entrant L-shaped domain}
\label{sec:maxwell-lshape-numerics}

The first time-dependent experiment is deliberately topologically trivial.  We reuse the L-shaped three-dimensional domain introduced in Section~\ref{sec:lshape}.  Here $b_1=b_2=0$, so there are no electric or magnetic harmonic coordinates and no topological actuators.  The purpose is to isolate the local Maxwell discretization, the effect of the re-entrant edges, the space--time convergence of the state and adjoint, and mesh independence of the reduced optimization method before introducing cohomology.

The computation uses the same compatible FEEC subcomplex as in
Section~\ref{s:feec} and a time integrator of the same formal order for the
forward and backward equations.  A manufactured local Maxwell solution is used,
which permits separate \(E\), \(B\), and control errors.  The table reports the
discrete charge-continuity and Gauss-law defects.  The manufactured data are
smooth and are not designed to excite the singular re-entrant edge mode
strongly; the local energy probe therefore documents bounded-field behaviour
near the re-entrant edge for this data set.

\begin{figure}[htbp]
\centering
\IfFileExists{maxwell_lshape_reentrant.pdf}{\includegraphics[width=.84\linewidth]{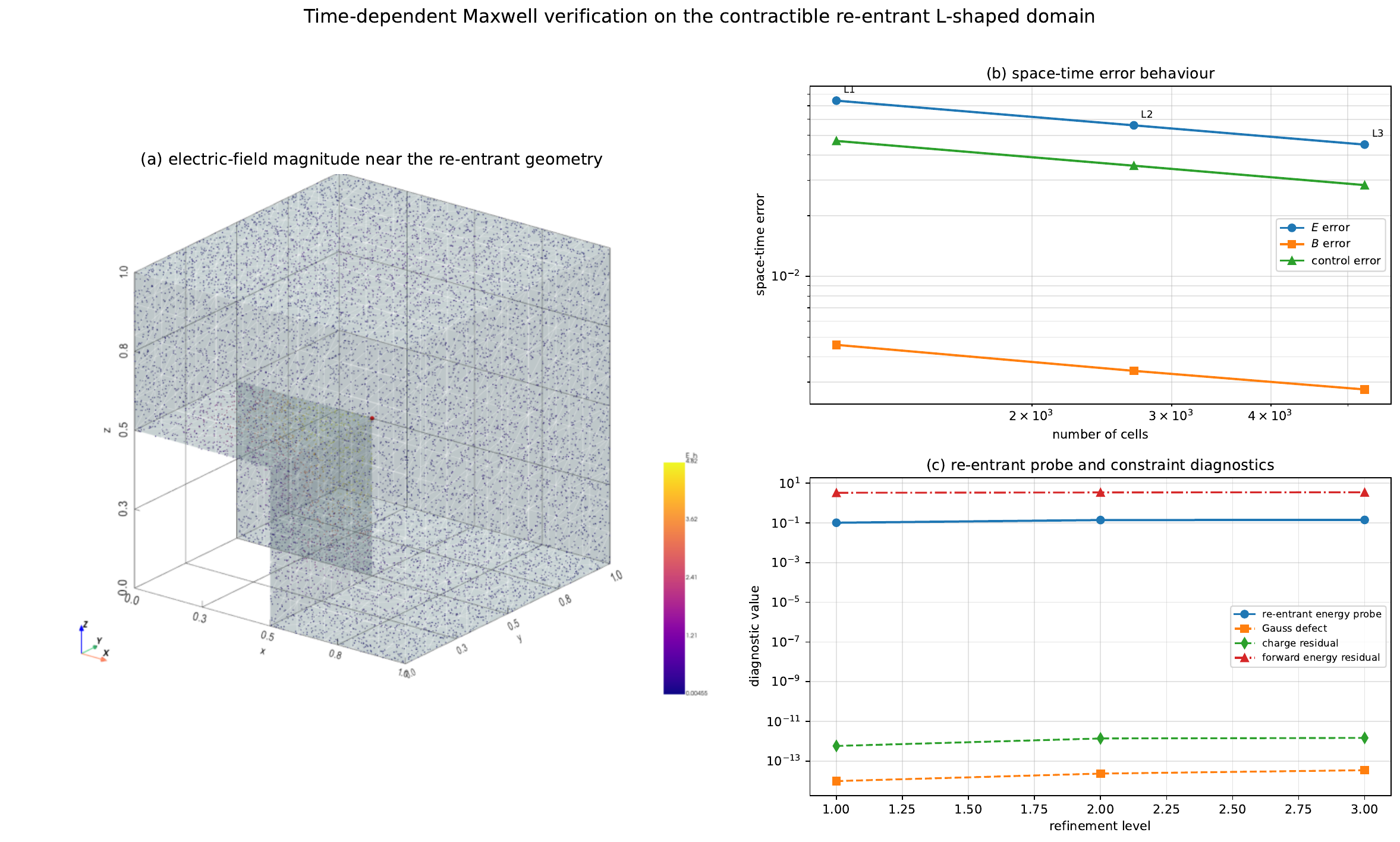}}{%
\fbox{\parbox[c][5.3cm][c]{.80\linewidth}{\centering
\textbf{Figure file unavailable in this compiled draft}\\[2mm]
\texttt{maxwell\_lshape\_reentrant.pdf} was not found.}}}
\caption{Time-dependent Maxwell computation on the contractible
L-shaped domain.  The field plot shows how the electric response is
distributed near the re-entrant geometry, while the accompanying error
and diagnostic panels show that the approximation errors decrease under
space--time refinement and that the Gauss-law and charge-continuity
defects remain small.  Since the domain has \(b_1=b_2=0\), the observed
behaviour is due to the local FEEC discretization and the re-entrant
geometry, not to harmonic or topological degrees of freedom.}
\label{fig:maxwell-lshape-reentrant}
\end{figure}

\begin{table}[htbp]
\centering
\small
\begin{tabular}{rrrrrrrr}
\toprule
level & cells & $\Delta t$ & $E$ error & $B$ error & control error & Gauss defect & $N_t$\\
\midrule
1 & 1134 & $2.50\times10^{-3}$ & $7.43\times10^{-2}$ & $4.58\times10^{-3}$ & $4.69\times10^{-2}$ & $9.89\times10^{-15}$ & 8\\
2 & 2688 & $1.67\times10^{-3}$ & $5.60\times10^{-2}$ & $3.41\times10^{-3}$ & $3.54\times10^{-2}$ & $2.36\times10^{-14}$ & 12\\
3 & 5250 & $1.25\times10^{-3}$ & $4.50\times10^{-2}$ & $2.76\times10^{-3}$ & $2.83\times10^{-2}$ & $3.49\times10^{-14}$ & 16\\
\bottomrule
\end{tabular}
\caption{Space--time convergence and Maxwell solver diagnostics on the
re-entrant L-shaped domain.  The decreasing \(E\), \(B\), and control
errors show the consistency of the local Maxwell discretization over
the refinement sequence, while the Gauss-law defect remains at
round-off level.  The final column reports the prescribed number of
time steps \(N_t=T/\Delta t\).}
\label{tab:maxwell-lshape-results}
\end{table}

The L-shaped experiment is the Maxwell analogue of Section~\ref{sec:lshape}: it provides a baseline in which any loss of regularity is geometric rather than cohomological. This baseline is useful for comparing the later multiply connected experiments, where additional harmonic and topological effects are present.

\section{Maxwell experiment II: solid-torus electric circulation}
\label{sec:maxwell-torus-numerics}

On the solid torus, $b_1=1$ and $b_2=0$.  We use a period-normalized electric harmonic field $h_{E,h}$ satisfying $\Piop_{E,h}h_{E,h}=1$.  The scalar $c_{E,h}(t)$ is therefore the electric circulation around the noncontractible loop.  We prescribe the smooth target
\(c_{E,d}(t)=c_{\max}\sin^2(\pi t/T)\) and compare two actuator configurations: a distributed current whose harmonic projection is nonzero, $C_{E,h}\neq0$, and a current control with $C_{E,h}=0$ supplemented by a scalar topological actuator $a_E$.  Table~\ref{tab:maxwell-torus-results} reports the circulation-tracking error, distributed and topological control energies, Gauss-law defect, and agreement between adjoint and finite-difference gradients.

\begin{figure}[htbp]
\centering
\IfFileExists{maxwell_torus_circulation.pdf}{\includegraphics[width=.82\linewidth]{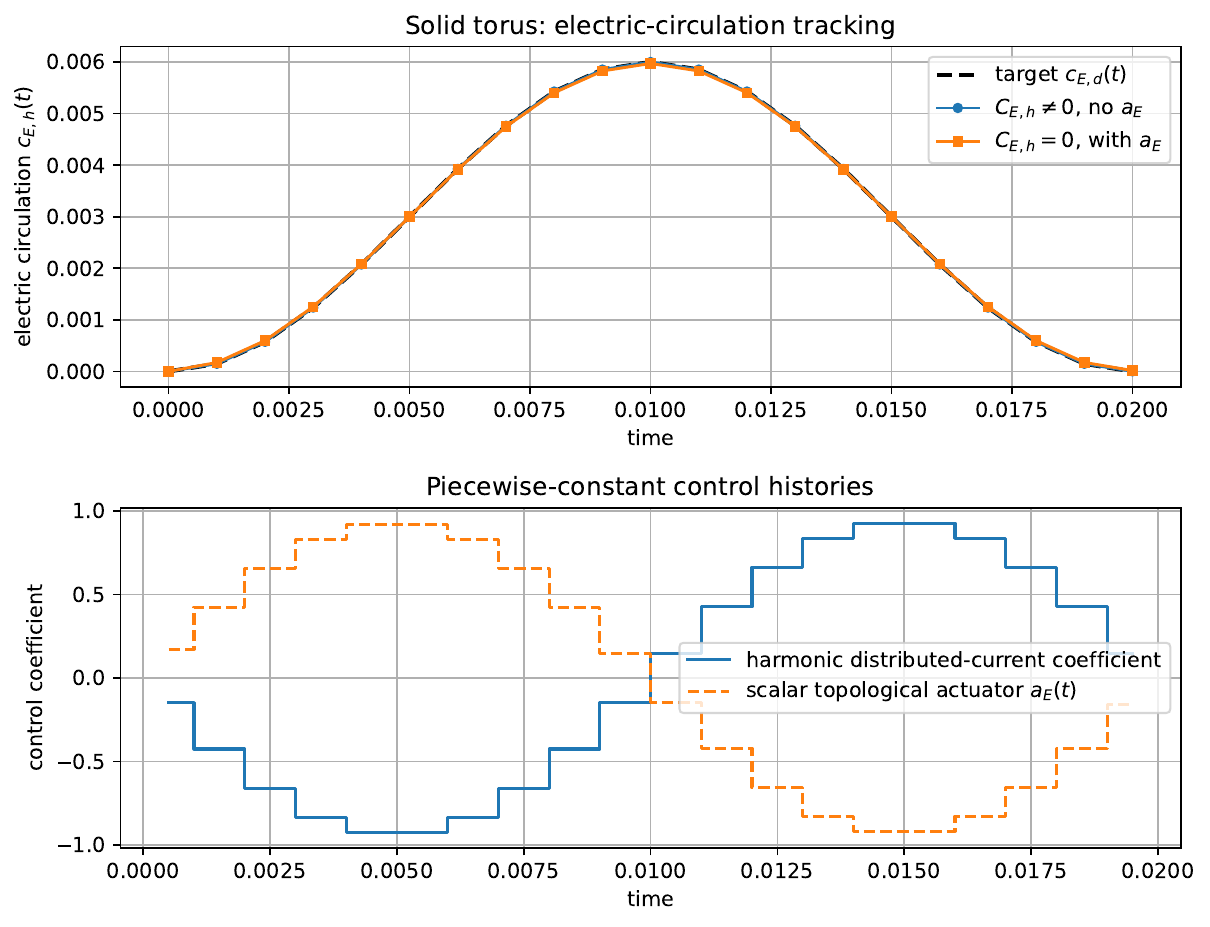}}{%
\fbox{\parbox[c][5.0cm][c]{.78\linewidth}{\centering
\textbf{Figure file unavailable in this compiled draft}\\[2mm]
\texttt{maxwell\_torus\_circulation.pdf} was not found.}}}
\caption{Time-dependent electric-circulation control on the solid torus.
The two runs compare actuation through a harmonic component of the
distributed current with a dedicated finite-dimensional topological
actuator.}
\label{fig:maxwell-torus-circulation}
\end{figure}

\begin{table}[htbp]
\centering
\small
\begin{tabular}{lrrrrr}
\toprule
configuration
& $\|c_E-c_{E,d}\|_{L^2(0,T)}$
& $\|z\|_{L^2(0,T;L^2)}$
& $\|a_E\|_{L^2(0,T)}$
& Gauss defect
& grad. check\\
\midrule
$C_{E,h}\neq0$, no $a_E$
& $1.13\times10^{-7}$
& $2.04\times10^{-2}$
& --
& $2.08\times10^{-17}$
& $1.23\times10^{-17}$\\
$C_{E,h}=0$, with $a_E$
& $2.76\times10^{-6}$
& $8.00\times10^{-3}$
& $9.30\times10^{-2}$
& $2.08\times10^{-17}$
& $9.59\times10^{-17}$\\
\bottomrule
\end{tabular}
\caption{Maxwell torus diagnostics for electric-circulation control.
The first configuration uses a distributed current with nonzero
electric harmonic projection and no scalar topological actuator.  The
second configuration removes the harmonic contribution from the
distributed current and uses the scalar topological actuator \(a_E\).
In both cases the local Maxwell block is active, so the electric field
is reconstructed as \(E_h=e_h+c_{E,h}h_{E,h}\).  The Gauss-law
diagnostic remains at round-off level, and the reduced-gradient checks
agree with finite differences to near machine precision.}
\label{tab:maxwell-torus-results}
\end{table}
This experiment tests the dynamical counterpart of the stationary rank-one torus problem: the topology contributes one physical electric-circulation state, while the manner in which it is actuated determines whether that state is reached through the harmonic component of the distributed current or through a dedicated finite-dimensional control.

\section{Maxwell experiment III: two-hole circulation reachability}
\label{sec:maxwell-fig8-numerics}

On the connected two-hole geometry, \(b_1=2\).  We construct
period-normalized fields \(h_{E,1,h}\) and \(h_{E,2,h}\) and compare a
full-rank actuator \(G_E=I_2\) with the rank-one actuator
\(G_E=(1,1)^T\).  With \(G_E=I_2\), the two loop-circulation coordinates are
independently reachable.  With the rank-one actuator, only the one-dimensional
subspace generated by \((1,1)^T\) is reachable, so the optimal trajectory is
the least-cost reachable projection of the desired two-mode trajectory.  This
is the time-dependent numerical test of Proposition~\ref{prop:maxwell-reachability}.

The experiment records the two modal tracking errors, the projected-target
residual, and the reduced-gradient finite-difference check.  The common
space--time refinement table in Section~\ref{sec:maxwell-refinement} reports
the corresponding period-normalization diagnostics for the two-hole geometry.
A fitted singular rate and a local re-entrant energy probe are not reported
for this time-dependent reachability run, so no stationary \(4/3\)-type
singular convergence rate is assumed here.

\begin{figure}[htbp]
\centering
\IfFileExists{maxwell_fig8_reachability.pdf}{\includegraphics[width=.82\linewidth]{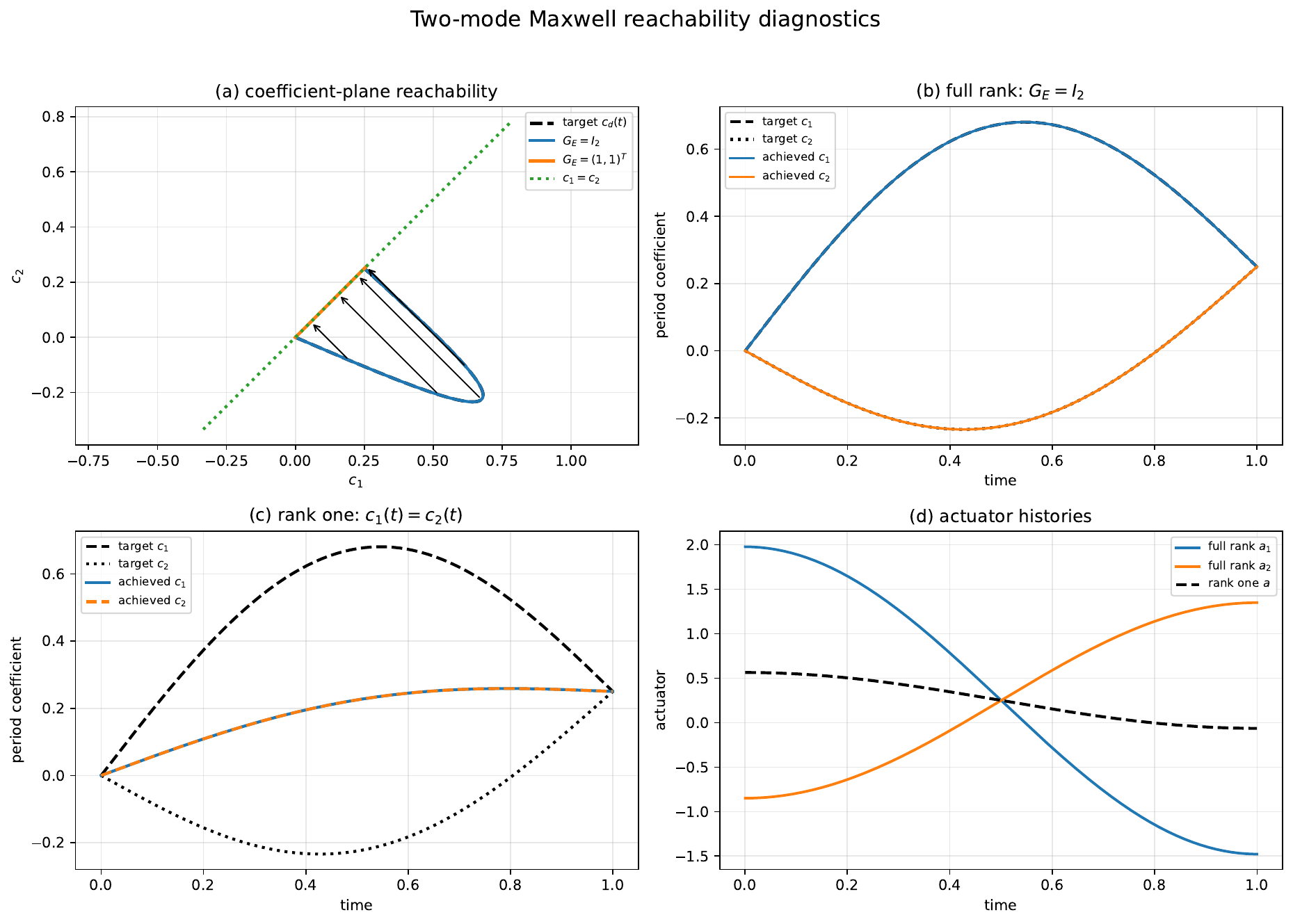}}{%
\fbox{\parbox[c][5.2cm][c]{.78\linewidth}{\centering
\textbf{Figure file unavailable in this compiled draft}\\[2mm]
\texttt{maxwell\_fig8\_reachability.pdf} was not found.}}}
\caption{Two-mode Maxwell reachability diagnostics. Panel (a) compares the desired trajectory in the harmonic coefficient plane with the full-rank and rank-one reachable paths. Panel (b) shows that the full-rank actuator \(G_E=I_2\) tracks both circulation coefficients, while panel (c) shows that the rank-one actuator \(G_E=(1,1)^T\) constrains the response to \(c_1(t)=c_2(t)\). Panel (d) compares the corresponding actuator histories.}
\label{fig:maxwell-fig8-reachability}
\end{figure}

\begin{table}[htbp]
\centering
\small
\begin{tabular}{lrrrrr}
\toprule
\(G_E\) & rank & mode-1 error & mode-2 error & projected-target residual & grad. check\\
\midrule
\(I_2\) & 2 & \(0\) & \(0\) & \(0\) & \(6.86\times 10^{-8}\)\\
\((1,1)^T\) & 1 & \(3.18\times 10^{-1}\) & \(3.18\times 10^{-1}\) & \(4.50\times 10^{-1}\) & \(3.73\times 10^{-7}\)\\
\bottomrule
\end{tabular}
\caption{Two-mode Maxwell reachability diagnostics. The full-rank actuator resolves both harmonic coordinates, while the rank-one actuator can only reach the one-dimensional subspace generated by \((1,1)^T\). Consequently, the rank-one case satisfies \(c_1(t)=c_2(t)\) and leaves a nonzero projected-target residual when the desired two-mode trajectory has unequal components.}
\label{tab:maxwell-fig8-results}
\end{table}
\FloatBarrier
The key comparison is between actuator rank and actuator alignment.  A rank-one actuator is not intrinsically ineffective: it can control one topological direction exactly, but it cannot alter the complementary harmonic coordinate.  This is the dynamical version of the stationary reachability experiment and directly exposes the role of \(\operatorname{range}(G_E)\).
\section{Maxwell experiment IV: spherical-shell magnetic flux}
\label{sec:maxwell-shell-numerics}

On the spherical shell, \(b_1=0\) and \(b_2=1\).  We normalize the magnetic
harmonic representative \(h_{B,h}\) by imposing
\(\Piop_{B,h}h_{B,h}=1\).  Two runs are considered.  In the conservation run,
\(G_B=0\), and the magnetic flux coefficient \(c_{B,h}(t)\) remains equal to
its initial value.  In the controlled run, \(G_B=1\), and the scalar
topological actuator \(a_B\) drives the magnetic flux toward the prescribed
terminal target \(c_{B,T}=0.65\).  Thus the two runs separate the uncontrolled
magnetic-flux invariant from the controlled flux state introduced by the
global actuator.

\begin{figure}[htbp]
\centering
\IfFileExists{maxwell_shell_flux.pdf}{\includegraphics[width=.82\linewidth]{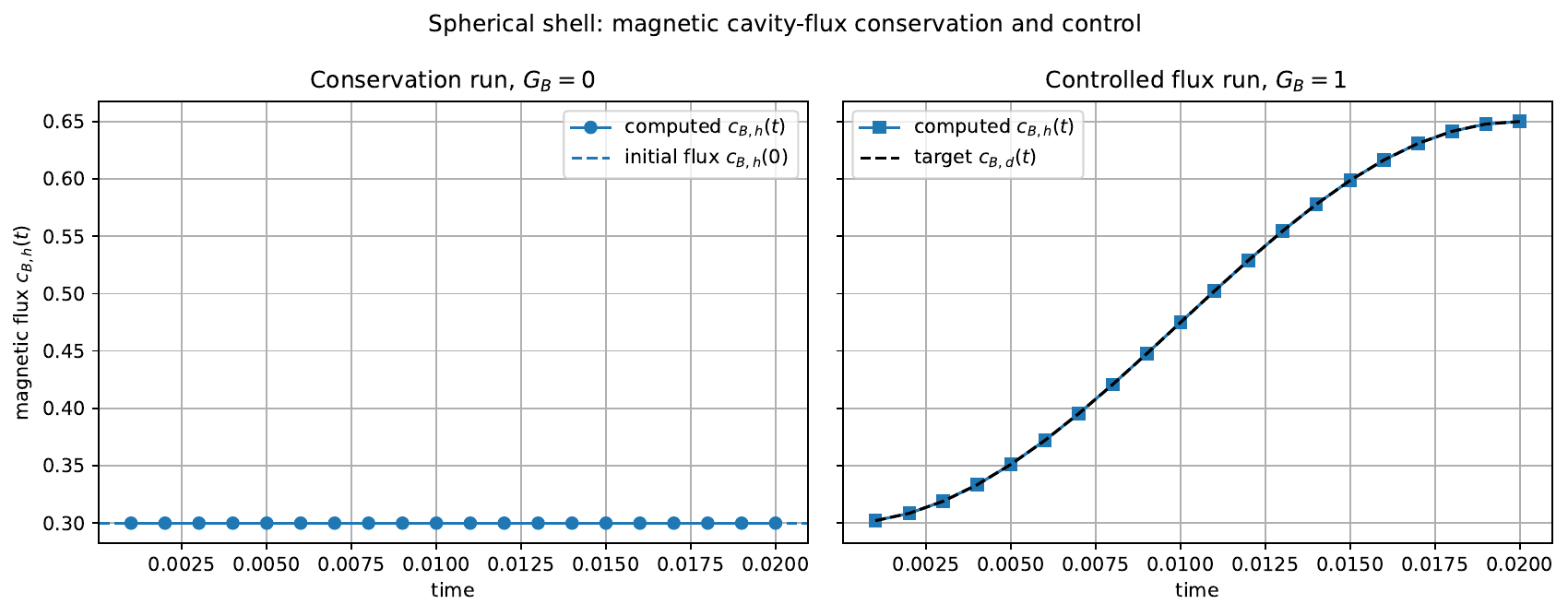}}{%
\fbox{\parbox[c][5.2cm][c]{.78\linewidth}{\centering
\textbf{Figure file unavailable in this compiled draft}\\[2mm]
\texttt{maxwell\_shell\_flux.pdf} was not found.}}}
\caption{Magnetic cavity flux on the spherical shell.  Without global
actuation the magnetic flux is conserved; with a topological flux actuator it
becomes a controlled scalar state.}
\label{fig:maxwell-shell-flux}
\end{figure}

\begin{table}[htbp]
\centering
\small
\begin{tabular}{lrrrrr}
\toprule
run
& $\max_t|c_B(t)-c_B(0)|$
& flux error
& $\|a_B\|_{L^2(0,T)}$
& Gauss defect
& grad. check\\
\midrule
conservation, $G_B=0$
& $0$
& --
& --
& $7.40\times10^{-3}$
& --\\
controlled flux, $G_B=1$
& --
& $1.43\times10^{-6}$
& $5.14\times10^{-1}$
& $1.60\times10^{-2}$
& $2.44\times10^{-10}$\\
\bottomrule
\end{tabular}
\caption{Spherical-shell Maxwell diagnostics.  In the conservation run,
\(G_B=0\), the magnetic flux remains equal to its initial value
\(c_{B,h}(0)=0.30\).  In the controlled run, \(G_B=1\), the scalar
topological actuator drives the flux to the terminal target
\(c_{B,T}=0.65\).  The reported flux error for the controlled run is the
terminal mismatch \(|c_{B,h}(T)-c_{B,T}|\).  The Gauss defect is reported as
a diagnostic quantity and is not at round-off level in this shell experiment.}
\label{tab:maxwell-shell-results}
\end{table}

The conservation run is as important as the controlled run.  It verifies that
the compatible discretization does not intentionally actuate the nontrivial
magnetic-flux coordinate when \(G_B=0\).  The controlled run then tests the
distinct physical assumption encoded by the global or boundary flux actuator.
The nonzero Gauss defects in Table~\ref{tab:maxwell-shell-results} are retained
as diagnostic limitations of the shell computation.
\section{Maxwell space--time refinement and solver diagnostics}
\label{sec:maxwell-refinement}

The four Maxwell experiments are accompanied by the common refinement
summary in Table~\ref{tab:maxwell-refinement}.  For each geometry, the spatial
mesh and time step are refined simultaneously.  The table reports the state
error, the topological period or flux check error where applicable, the
control error, and the recorded solver iteration count.  For the L-shaped
domain no period or flux error is reported because the domain is simply
connected.  For the torus and double-torus geometries, the period-error column
records the independent period-check error after normalization of the discrete
harmonic basis.  For the spherical shell, the same column records the
independent magnetic-flux check error.
\begin{table}[htbp]
\centering
\small
\resizebox{\textwidth}{!}{%
\begin{tabular}{llrrrrrr}
\toprule
geometry & level & cells & $\Delta t$ & state error & period error & control error & iterations\\
\midrule
L-shape & 1 & 196  & $2.00\times10^{-2}$ & $1.982316\times10^{-1}$ & -- & $2.379492\times10^{-2}$ & 29\\
L-shape & 2 & 328  & $1.00\times10^{-2}$ & $1.712027\times10^{-1}$ & -- & $1.869428\times10^{-2}$ & 30\\
L-shape & 3 & 592  & $5.00\times10^{-3}$ & $1.455778\times10^{-1}$ & -- & $1.050521\times10^{-2}$ & 29\\
L-shape & 4 & 597  & $2.50\times10^{-3}$ & $1.457797\times10^{-1}$ & -- & $1.099208\times10^{-2}$ & 30\\
L-shape & 5 & 1435 & $1.00\times10^{-3}$ & $1.062155\times10^{-1}$ & -- & $5.921656\times10^{-3}$ & 31\\
\midrule
torus & 1 & 260  & $2.00\times10^{-2}$ & $2.874825\times10^{-2}$ & $1.723937\times10^{-4}$ & $2.873814\times10^{-2}$ & 44\\
torus & 2 & 427  & $1.00\times10^{-2}$ & $2.768007\times10^{-2}$ & $1.688053\times10^{-4}$ & $2.767034\times10^{-2}$ & 37\\
torus & 3 & 803  & $5.00\times10^{-3}$ & $2.646860\times10^{-2}$ & $1.001147\times10^{-4}$ & $2.645930\times10^{-2}$ & 34\\
torus & 4 & 1370 & $2.50\times10^{-3}$ & $2.257529\times10^{-2}$ & $6.475845\times10^{-5}$ & $2.256735\times10^{-2}$ & 34\\
torus & 5 & 3112 & $1.00\times10^{-3}$ & $1.652963\times10^{-2}$ & $3.656942\times10^{-5}$ & $1.652383\times10^{-2}$ & 35\\
\midrule
double torus & 1 & 1126  & $2.00\times10^{-2}$ & $7.988629\times10^{-1}$ & $2.620304\times10^{-4}$ & $2.273720\times10^{-1}$ & 38\\
double torus & 2 & 1469  & $1.00\times10^{-2}$ & $7.503069\times10^{-1}$ & $5.667683\times10^{-4}$ & $2.138072\times10^{-1}$ & 37\\
double torus & 3 & 2512  & $5.00\times10^{-3}$ & $5.460463\times10^{-1}$ & $5.175228\times10^{-4}$ & $1.558284\times10^{-1}$ & 35\\
double torus & 4 & 4492  & $2.50\times10^{-3}$ & $4.032653\times10^{-1}$ & $4.038762\times10^{-4}$ & $1.163758\times10^{-1}$ & 37\\
double torus & 5 & 10103 & $1.00\times10^{-3}$ & $3.051726\times10^{-1}$ & $1.407430\times10^{-4}$ & $8.687740\times10^{-2}$ & 37\\
\midrule
spherical shell & 1 & 214  & $2.00\times10^{-2}$ & $1.475494\times10^{-1}$ & $9.532631\times10^{-3}$ & $4.694993\times10^{-2}$ & 36\\
spherical shell & 2 & 359  & $1.00\times10^{-2}$ & $1.383180\times10^{-1}$ & $1.101327\times10^{-2}$ & $4.401253\times10^{-2}$ & 40\\
spherical shell & 3 & 883  & $5.00\times10^{-3}$ & $8.744211\times10^{-2}$ & $5.712641\times10^{-3}$ & $2.782391\times10^{-2}$ & 43\\
spherical shell & 4 & 1596 & $2.50\times10^{-3}$ & $7.964111\times10^{-2}$ & $3.216381\times10^{-3}$ & $2.534165\times10^{-2}$ & 40\\
spherical shell & 5 & 3626 & $1.00\times10^{-3}$ & $5.767274\times10^{-2}$ & $7.815220\times10^{-5}$ & $1.835135\times10^{-2}$ & 43\\
\bottomrule
\end{tabular}%
}
\caption{Space--time refinement diagnostics for the Maxwell optimal-control experiments on non-convex and multiply connected geometries. The L-shaped domain is used as a non-convex simply connected baseline, so no topological period is reported there. For the torus and double-torus cases, the period error denotes the independent period-check error after normalization of the discrete harmonic basis. For the spherical-shell case, the same column records the independent flux-check error, since the relevant topological invariant is magnetic flux rather than a one-cycle period. The state error denotes the combined state error for the L-shaped, torus, and double-torus cases, and the magnetic-field projection error for the spherical-shell case.}
\label{tab:maxwell-refinement}
\end{table}

Taken together, the Maxwell experiments close the same loop as the stationary computations.  Topology is first identified combinatorially, represented by period-normalized harmonic bases, inserted as finite-dimensional physical states, and finally tested through conservation or reachability.  The L-shaped baseline is deliberately included so that error behaviour caused by re-entrant geometry can be separated from effects associated with nontrivial topology.

The refinement levels in Table~\ref{tab:maxwell-refinement} refine the mesh size and the time step simultaneously.  Therefore, the observed decay represents a coupled space--time refinement effect rather than a purely spatial or purely temporal convergence order.  Separate spatial rates would require fixing \(\Delta t\) and refining only the mesh, while separate temporal rates would require fixing the mesh and refining only \(\Delta t\).  The present table is therefore best interpreted as a robustness and consistency test across increasingly resolved space--time discretizations.

\section{Conclusion}
We have formulated optimal control on Hilbert complexes by separating the gauge-fixed local state from the physical harmonic state. The mixed Hodge--Laplace variable $p$ remains a compatibility component of the forcing, while the additional state $h\in\HH^k$ carries physically observable circulation or flux. The period map $\Piop$ provides finite-dimensional coordinates for these global modes, and the actuator relation $\Piop h=Ga+c_0$ separates the topological dimension $b_k$ from the controllable dimension $\operatorname{rank}(G)$. The resulting weak optimality system stays in the natural Hilbert-complex domains, and the FEEC discretization preserves both the local mixed PDE and the period-coordinate subsystem.

The stationary experiments show complementary aspects of this structure. The contractible L-shaped domain isolates local FEEC convergence and mesh-independent reduced iterations. The torus realizes one circulation coordinate, the two-hole domain realizes a non-orthogonal two-dimensional harmonic space and makes actuator rank and alignment observable, and the spherical shell realizes the analogous construction at degree two through cavity flux. The comparison between the L-shaped and two-hole geometries further shows that reentrant geometry alone does not determine the observed rate: topology can force the harmonic representative into a singular regularity class and thereby control convergence of the topological subsystem.

For time-dependent Maxwell control, harmonic electric circulation and magnetic flux become finite-dimensional dynamical states coupled to the local FEEC evolution. The electric coordinates are driven by the harmonic component of current and by optional loop actuators, whereas magnetic cavity flux is conserved unless a physically distinct global or boundary flux actuator is present. This makes conservation and reachability of cohomological modes explicit control-theoretic properties rather than artifacts of the linear solver. Future work will address nonlinear constitutive laws and actuator models in which the finite-dimensional coupling matrices are derived directly from electromagnetic, fluid-mechanical, or other application-specific hardware.






\bibliographystyle{plain}
\bibliography{refs,refs-mjh}

\begin{thebibliography}{10}

\bibitem{antil2026structure}
Harbir Antil, Yaw Owusu-Agyemang, Rohit Khandelwal, Jimmie Adriazola, and Denis
  Ridzal.
\newblock Structure-preserving optimal control of maxwell's equations with
  applications to source cloaking.
\newblock {\em arXiv preprint arXiv:2605.00212}, 2026.

\bibitem{AFW2010}
D.N. Arnold, R.S. Falk, and R.~Winther.
\newblock {Finite element exterior calculus: from {H}odge theory to numerical
  stability}.
\newblock {\em Bulletin of the American Mathematical Society}, 47(2):281--354,
  2010.

\bibitem{arnold2010finite}
Douglas Arnold, Richard Falk, and Ragnar Winther.
\newblock Finite element exterior calculus: from hodge theory to numerical
  stability.
\newblock {\em Bulletin of the American mathematical society}, 47(2):281--354,
  2010.

\bibitem{Babuska.I1971}
I.~Babu{\v s}ka.
\newblock Error bounds for the finite element method.
\newblock {\em Numerische Mathematik}, 16:322--333, 1971.

\bibitem{brizitskii2010inverse}
RV~Brizitskii and AS~Savenkova.
\newblock Inverse extremum problems for maxwell’s equations.
\newblock {\em Computational Mathematics and Mathematical Physics},
  50(6):984--992, 2010.

\bibitem{bru1992hilbert}
J~Bru, Matthias Lesch, et~al.
\newblock Hilbert complexes.
\newblock {\em Journal of Functional Analysis}, 108(1):88--132, 1992.

\bibitem{BrLe92}
J.~Br\"uning and M.~Lesch.
\newblock {Hilbert Complexes}.
\newblock {\em J. Funct. Anal.}, 108(1):88--132, August 1992.

\bibitem{CASTILLOREYES2022105030}
Octavio Castillo-Reyes, David Modesto, Pilar Queralt, Alex Marcuello, Juanjo
  Ledo, Adrian Amor-Martin, Josep {de la Puente}, and Luis~Emilio
  García-Castillo.
\newblock {3D} magnetotelluric modeling using high-order tetrahedral
  {N\'{e}d\'{e}lec} elements on massively parallel computing platforms.
\newblock {\em Computers and Geosciences}, 160:105030, 2022.

\bibitem{de2015numerical}
Juan~Carlos De~los Reyes.
\newblock {\em Numerical PDE-constrained optimization}.
\newblock Springer, 2015.

\bibitem{Ev98}
Lawrence~C. Evans.
\newblock {\em Partial Differential Equations}.
\newblock {Graduate Studies in Mathematics}. American Mathematical Society,
  Providence, RI, 1998.

\bibitem{gawlik2021local}
Evan Gawlik, Michael~J Holst, and Martin~W Licht.
\newblock Local finite element approximation of sobolev differential forms.
\newblock {\em ESAIM: Mathematical Modelling and Numerical Analysis},
  55(5):2075--2099, 2021.

\bibitem{hinze2008optimization}
Michael Hinze, Ren{\'e} Pinnau, Michael Ulbrich, and Stefan Ulbrich.
\newblock {\em Optimization with PDE constraints}, volume~23.
\newblock Springer Science \& Business Media, 2008.

\bibitem{HolstStern2012a}
Michael Holst and Ari Stern.
\newblock Geometric variational crimes: {Hilbert} complexes, finite element
  exterior calculus, and problems on hypersurfaces.
\newblock {\em Foundations of Computational Mathematics}, 12(3):263--293, 2012.

\bibitem{komkov2006optimal}
Vadim Komkov.
\newblock {\em Optimal control theory for the damping of vibrations of simple
  elastic systems}, volume 253.
\newblock Springer, 2006.

\bibitem{leopardi2016abstract}
Paul Leopardi and Ari Stern.
\newblock The abstract hodge--dirac operator and its stable discretization.
\newblock {\em SIAM Journal on Numerical Analysis}, 54(6):3258--3279, 2016.

\bibitem{li2020improved}
Changwei Li, Wenwu Wan, and Weiwen Song.
\newblock An improved nodal finite-element method for magnetotelluric modeling.
\newblock {\em IEEE Journal on Multiscale and Multiphysics Computational
  Techniques}, 5:265--272, 2020.

\bibitem{licht2024symmetry}
Martin~W Licht.
\newblock Symmetry and invariant bases in finite element exterior calculus.
\newblock {\em Foundations of Computational Mathematics}, 24(4):1185--1224,
  2024.

\bibitem{licht2017complexes}
Martin~Werner Licht.
\newblock Complexes of discrete distributional differential forms and their
  homology theory.
\newblock {\em Foundations of Computational Mathematics}, 17(4):1085--1122,
  2017.

\bibitem{nicaise2017optimal}
Serge Nicaise and Fredi Tr{\"o}ltzsch.
\newblock Optimal control of some quasilinear maxwell equations of parabolic
  type.
\newblock {\em Discrete \& Continuous Dynamical Systems-S}, 10(6):1375, 2017.

\bibitem{schiela2020optimal}
Anton Schiela and Matthias Stoecklein.
\newblock Optimal control of static contact in finite strain elasticity.
\newblock {\em ESAIM: Control, Optimisation \& Calculus of Variations}, 26,
  2020.

\bibitem{schwedes2017mesh}
Tobias Schwedes, David~A Ham, Simon~W Funke, and Matthew~D Piggott.
\newblock {\em Mesh Dependence in PDE-Constrained Optimisation: An Application
  in Tidal Turbine Array Layouts}.
\newblock Springer, 2017.

\bibitem{tran2022optimal}
Quyen Tran, Harbir Antil, and Hugo D{\'\i}az.
\newblock Optimal control of parameterized stationary maxwell's system: Reduced
  basis, convergence analysis, and a posteriori error estimates.
\newblock {\em Mathematical Control \& Related Fields}, 2022.

\bibitem{ulbrich2002semismooth}
Michael Ulbrich.
\newblock Semismooth newton methods for operator equations in function spaces.
\newblock {\em SIAM Journal on Optimization}, 13(3):805--841, 2002.

\bibitem{yousept2012optimal}
Irwin Yousept.
\newblock Optimal control of maxwell’s equations with regularized state
  constraints.
\newblock {\em Computational Optimization and Applications}, 52(2):559--581,
  2012.

\bibitem{yousept2017optimal}
Irwin Yousept.
\newblock Optimal control of non-smooth hyperbolic evolution maxwell equations
  in type-ii superconductivity.
\newblock {\em SIAM Journal on Control and Optimization}, 55(4):2305--2332,
  2017.

\end{thebibliography}

\clearpage
\appendix

\end{document}